\def \smvx {circle[radius = .07][fill = black]}
\tikzstyle{edge}=[very thick]
\definecolor{bostonuniversityred}{rgb}{0.8, 0.0, 0.0}
\definecolor{arsenic}{rgb}{0.23, 0.27, 0.29}
\tikzstyle{diredge}=[postaction={decorate,decoration={markings,
\tikzstyle{diredge2}=[postaction={decorate,decoration={markings,
\newcommand{\defPt}[3]{
	\def \pt {(#1, #2)}
	\coordinate [at = \pt, name = #3];
}
\newcommand{\defPtm}[2]{

	\coordinate [at = #1, name = #2];
}
\tikzset{
   K5/.pic={
     \foreach \x in {1,...,5}{%
    \pgfmathparse{(\x-1)*360/5}
    \node[draw,circle,fill=black, inner sep=1 pt] (N-\x) at (\pgfmathresult:1 cm) [thick] {};
  }
  
  \foreach \x in {1,...,4}{%
    \foreach \y in {\x,...,5}{%
        \path (N-\x) edge[dotted,line width=1 pt] (N-\y);
  }
  }
  }
}
\tikzset{
   K4/.pic={
     \foreach \x in {1,...,4}{%
    \pgfmathparse{(\x-1)*360/4}
    \node[draw,circle,fill=black, inner sep=1 pt] (N-\x) at (\pgfmathresult:1 cm) [thick] {};
  }
  
  \foreach \x in {1,...,3}{%
    \foreach \y in {\x,...,4}{%
        \path (N-\x) edge[dotted,line width=1 pt] (N-\y);
  }
  }
  }
}
\newcommand{\fitellipsis}[3] 
{\draw [fill=white] let \p1=(#1), \p2=(#2), \n1={atan2(\y2-\y1,\x2-\x1)}, \n2={veclen(\y2-\y1,\x2-\x1)}
    in ($ (\p1)!0.5!(\p2) $) ellipse [ x radius=\n2/2+0.1cm, y radius=#3cm, rotate=\n1];
}
\newcommand{\fitellipsisnfill}[3] 
{\draw [] let \p1=(#1), \p2=(#2), \n1={atan2(\y2-\y1,\x2-\x1)}, \n2={veclen(\y2-\y1,\x2-\x1)}
    in ($ (\p1)!0.5!(\p2) $) ellipse [ x radius=\n2/2+0.1cm, y radius=#3cm, rotate=\n1];
}
\theoremstyle{plain}
\newtheorem*{thm*}{Theorem}
\newtheorem{thm}{Theorem}[section]
\Crefname{thm}{Theorem}{Theorems}
\newtheorem*{lem*}{Lemma}
\newtheorem{lem}[thm]{Lemma}
\Crefname{lem}{Lemma}{Lemmas}
\newcounter{cclaim}
\newtheorem*{claim*}{Claim}
\crefname{claim}{Claim}{Claims}
\Crefname{claim}{Claim}{Claims}
\newtheorem{prop}[thm]{Proposition}
\Crefname{prop}{Proposition}{Propositions}
\newtheorem{cor}[thm]{Corollary}
\crefname{cor}{Corollary}{Corollaries}
\crefname{conj}{Conjecture}{Conjectures}
\newtheorem{qn}[thm]{Question}
\Crefname{qn}{Question}{Questions}
\Crefname{obs}{Observation}{Observations}
\Crefname{ex}{Example}{Examples}
\theoremstyle{definition}
\Crefname{prob}{Problem}{Problems}
\Crefname{defn}{Definition}{Definitions}
\theoremstyle{remark}
\renewenvironment{proof}[1][]{\begin{trivlist}
\item[\hspace{\labelsep}{\bf\noindent Proof#1.\/}] }{\qed\end{trivlist}}
\newcommand{\remove}[1]{}
\newcommand{\ceil}[1]{
    \lceil #1 \rceil
}
\newcommand{\floor}[1]{
    \lfloor #1 \rfloor
}
\newcommand{\eps}{\varepsilon}
\renewcommand{\P}{\mathbb{P}}
\newcommand{\G}{\mathcal{G}}
\newcommand{\HH}{\mathcal{H}}
\newcommand{\property}[2]
{$#1$-local $#2$-independence property}
\title{\texorpdfstring{\vspace{-1.3cm}}{}
Large independent sets from local considerations}
\date{}
\author{
Matija Buci\'c\thanks{School of Mathematics, Institute for Advanced Study and Department of Mathematics, Princeton University, Princeton, USA. Email: \href{mailto:matija.bucic@ias.edu} {\nolinkurl{matija.bucic@ias.edu}}.}
\and
Benny Sudakov\thanks{Department of Mathematics, ETH, Z\"urich, Switzerland. Email:
\href{mailto:benjamin.sudakov@math.ethz.ch} {\nolinkurl{benjamin.sudakov@math.ethz.ch}}.
Research supported in part by SNSF grant 200021\_196965.}
}
\begin{document}

\maketitle

\begin{abstract}

    The following natural problem was raised independently by Erd\H{o}s-Hajnal and Linial-Rabinovich in the early '90s. How large must the independence number $\alpha(G)$ of a graph $G$ be whose every $m$ vertices contain an independent set of size $r$? In this paper, we discuss new methods to attack this problem. 
    
    The first new approach, based on bounding Ramsey numbers of certain graphs, allows us to improve the previously best lower bounds due to Linial-Rabinovich, Erd\H{o}s-Hajnal and Alon-Sudakov. As an example, we prove that any $n$-vertex graph $G$ having an independent set of size $3$ among every $7$ vertices has $\alpha(G) \ge \Omega(n^{5/12})$. This confirms a conjecture of Erd\H{o}s and Hajnal that $\alpha(G)$ should be at least $n^{1/3+\eps}$ and brings the exponent halfway to the best possible value of $1/2$.
    
    Our second approach deals with upper bounds. It relies on a reduction of the original question to the following natural extremal problem. What is the minimum possible value of the $2$-density\footnote{The \textit{$2$-density} of a graph $H$ is defined as $m_2(H):= \max\limits_{H' \subseteq H, |H'| \ge 3} \frac{e(H')-1}{|H'|-2}.$} of a graph on $m$ vertices having no independent set of size $r$? This allows us to improve previous upper bounds due to Linial-Rabinovich, Krivelevich and Kostochka-Jancey. 
    
    As part of our arguments, we link the problem of Erd\H{o}s-Hajnal and Linial-Rabinovich and our new extremal $2$-density problem to a number of other well-studied questions. This leads to many interesting directions for future research.
\end{abstract}

\section{Introduction}
In this paper, we study the following classical problem. If we know that any $m$ vertices of a graph contain an independent set of order $r$ how large can the independence number of the whole graph be? The study of this problem for a specific choice of parameters dates back almost $60$ years, with the first published result being due to Erd\H{o}s and Rogers \cite{erdos-rogers} in 1962.

Over the years this problem attracted a lot of attention. Originally the focus was on the instance of the problem in which we keep the sizes of independent sets we want to find locally and in the whole graph to be fixed and small. In other words, if we forbid in $G$ an independent set of size $s,$ how big a subset of vertices one can find without an independent set of size $r$? Choosing $r=2$ precisely recovers the usual Ramsey problem and was in fact the original motivation behind the general question. This question became known as the Erd\H{o}s-Rogers problem and has been extensively studied, for some examples see \cite{erdos-rogers,bollobas-hind,michael-q-s,wolfowitz,dudek, benny, benny-2,gowers-janzer} and a recent survey \cite{survey} due to Dudek and R\"odl.

In the early 90's Erd\H{o}s and Hajnal \cite{erdos-problems} and independently Linial and Rabinovich \cite{L-R} propose changing the perspective and fixing the local parameters $m$ and $r$ instead. In other words, asking what can be said about the independence number of the whole graph if we know that any small number of vertices $m$ contain an independent set of size $r$. This frames the problem squarely under the so-called local-global principle, stating that one can obtain global understanding of a structure from having a good understanding of its local properties, or vice versa. This phenomenon has been ubiquitous in many areas of mathematics and beyond, see e.g. \cite{local-global-1,local-global-2,local-global-4,gromov}. In fact, one can define an $m$-local independence number $\alpha_m(G)$ of a graph $G$ to be the minimum independence number we can find among subgraphs of $G$ on $m$ vertices and the problem becomes relating the local independence number to the independence number of $G$ itself, the ``global'' independence number. 
In particular, we are interested in the smallest possible size of $\alpha(G)$ in an $n$-vertex graph satisfying $\alpha_m(G) \ge r$. 

In this paper we discuss two new approaches for attacking this problem, which allow us to significantly improve previously best-known bounds due to Linial and Rabinovich \cite{L-R}, Erd\H{o}s-Hajnal \cite{erdos-problems}, Alon and Sudakov \cite{alon-sudakov}, Krivelevich \cite{michael-critical} and Kostochka and Jancey \cite{kost}. In the case of lower bounds, we improve their results for at least half of the possible choices of $m$ and $r$ and in the case of upper bounds for essentially all choices. Moreover, we believe that both approaches have the potential for further improvements. 

The initial approach of Linial and Rabinovich \cite{L-R} and independently Alon and Sudakov \cite{alon-sudakov} reduces the lower bound problem to the question of bounding from above Ramsey numbers of a clique of size $k=\ceil{\frac{m}{r-1}}$ vs a large independent set. Our new idea is that one can find other ``forbidden'' graphs whose Ramsey numbers perform better. For this to work we need to obtain upper bounds on the Ramsey numbers of our new graphs vs a large independent set, which often turns out to be an interesting problem in its own right. See the beginning of \Cref{sec:2.1} for a more detailed illustration of our new approach.

The above-introduced parameter $k$ controls in large part the known lower bounds for $\alpha(G)$ among all graphs satisfying $\alpha_m(G) \ge r$. Linial and Rabinovich \cite{L-R} determine the answer precisely if $k \le 2$, i.e.\ for $m \le 2r-2$. For $k=3$, they show that an $n$ vertex graph satisfying $\alpha_m(G) \ge r$ must have $\alpha(G) \ge n^{1-\frac{2}{r-1}-o(1)}$ if $m=2r-1$ and $\alpha(G) \ge \Omega(n^{1/2})$ for the rest of the range $m \le 3r-3$. Our first result improves the exponent in their bounds for the first half of this range. Moreover, the improvement in the exponent is by a constant factor independent of $r$, unless $m = 2r-1$. 

\begin{restatable}{prop}{kisthree}\label{prop:k=3-lower-bound}
Let $m=2r-2+t$ for $1 \le t \le r-1$. Then any $n$-vertex graph $G$ satisfying $\alpha_m(G) \ge r$ has $\alpha(G) \ge \Omega(n^{1-1/\ell}),$ where $\ell=\floor{\frac{r-1}{t}}+1$.
\end{restatable}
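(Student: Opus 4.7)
The plan is to continue the Ramsey-reduction approach of Linial--Rabinovich and Alon--Sudakov, but to reduce the problem to a sharper extremal problem than the clique Ramsey $R(K_3, K_s)$. Assume for contradiction that $\alpha(G) < s$ and fix a maximum independent set $I$ of $G$. Let $\mathcal{B}$ denote the bipartite graph on parts $(I, V \setminus I)$ whose edges are those of $G$ between the two parts; by maximality of $I$, every vertex of $V \setminus I$ has a neighbour in $I$. The first step is to derive the following local expansion property from the hypothesis $\alpha_m(G) \ge r$: for every $W \subseteq V \setminus I$ of size $r-1+t$, the combined neighbourhood $|\bigcup_{v \in W} N_I(v)|$ is at least $r$. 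Indeed, if it were at most $r-1$, picking $T \subseteq I$ of size $r-1$ containing this union gives a set $W \cup T$ of exactly $m$ vertices of $G$, and a direct Hall-type argument that uses the maximality of $I$ shows that the largest independent set in $G[W \cup T]$ has size at most $|T| = r-1$, contradicting $\alpha_m(G) \ge r$.

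The second step is to transfer this local expansion into a sparse extremal-graph statement. One builds an auxiliary graph $H$ on (a suitably chosen subset of) $V \setminus I$ whose edges encode shared-$I$-neighbourhood structure in $\mathcal{B}$, and whose independence number is bounded above by $\alpha(G)$. The goal is to show that a copy of the even cycle $C_{2\ell}$ in $H$, where $\ell = \lfloor (r-1)/t \rfloor + 1$, would produce $r-1+t$ vertices of $V \setminus I$ with combined $I$-neighbourhood inside a set of size $r-1$, violating local expansion. The choice of $\ell$ arises from the combinatorial inequality $(\ell-1)t \le r-1 < \ell t$, which is exactly the threshold that permits a cyclic chain of $\ell$ ``shared neighbour'' links to pack $r-1+t$ vertices of $V \setminus I$ into at most $r-1$ neighbours in $I$. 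Once $H$ is $C_{2\ell}$-free, the Bondy--Simonovits theorem $\mathrm{ex}(n, C_{2\ell}) = O(n^{1+1/\ell})$ combined with Caro--Wei yields $\alpha(G) \ge \alpha(H) = \Omega(n^{1-1/\ell})$.

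\textbf{Main obstacle.} The most delicate step is the second one: designing the auxiliary graph $H$ so that cycles of length $2\ell$ in it really do correspond to configurations ruled out by local expansion in $\mathcal{B}$. The boundary case $t=1$ (so $\ell = r$ and $2\ell > m$) is particularly subtle, since the forbidden configuration no longer fits inside a single $m$-subset of $G$, and handling it likely requires an iterated, multi-layered common-neighbourhood construction rather than the direct one.
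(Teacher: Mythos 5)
Your Step~1 (the local expansion property for the bipartite graph $\mathcal{B}$) is correct and can indeed be proved exactly by the maximality-of-$I$ argument you describe: if $W\subseteq V\setminus I$ with $|W|=r-1+t$ satisfied $\bigl|\bigcup_{v\in W}N_I(v)\bigr|\le r-1$, one fixes $T\supseteq\bigcup_{v\in W}N_I(v)$ with $|T|=r-1$, uses $\alpha_m(G)\ge r$ to extract an independent set $S\subseteq W\cup T$ of size $r$, and then $(I\setminus\bigcup_{v\in S\cap W}N_I(v))\cup(S\cap W)$ is an independent set strictly larger than $I$. That part is sound.

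However, Step~2 is not a proof but a statement of intent, and this is exactly where the argument lives or dies. You never define the auxiliary graph $H$, and the constraints you place on it pull in opposite directions. To conclude $\alpha(G)\ge\alpha(H)$ you essentially need $H$ to contain all of $G[V\setminus I]$ as a subgraph; but then $H$ being $C_{2\ell}$-free requires $G[V\setminus I]$ itself to avoid $C_{2\ell}$, and there is no reason for that: even cycles are bipartite, so forbidding them is completely orthogonal to the hypothesis $\alpha_m(G)\ge r$. Conversely, if the edges of $H$ only encode ``shared $I$-neighbour'' relations, then $\alpha(H)\le\alpha(G)$ does not follow. Moreover the numbers do not match up: a $C_{2\ell}$ supplies $2\ell$ vertices, whereas local expansion speaks about sets of $r-1+t$ vertices, and (as you note for $t=1$) $2\ell$ can exceed $m$ itself. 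You flag this yourself (``the most delicate step'', ``particularly subtle'', ``likely requires an iterated, multi-layered \dots\ construction''), which is the honest admission that the key lemma has not been supplied.

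The paper's proof avoids this reduction entirely and is much shorter. It works directly in $G$: with $a=t\ell-(r-1)$ (which one checks satisfies $1\le a\le t$), a vertex-disjoint union of $a$ copies of $C_{2\ell-1}$ and $t-a$ copies of $C_{2\ell+1}$ has exactly $m$ vertices and independence number exactly $r-1$, so it cannot occur in $G$. Deleting a maximal packing of the relevant odd cycle (at most $m$ vertices) therefore leaves a $C_{2\ell-1}$-free or $C_{2\ell+1}$-free subgraph on at least $n-m$ vertices, and the cycle-complete Ramsey theorem of Erd\H{o}s--Faudree--Rousseau--Schelp then gives $\alpha\ge\Omega(n^{1-1/\ell})$. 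Note in particular that the paper uses \emph{odd} cycles and cycle-vs-complete Ramsey numbers, not even cycles and Bondy--Simonovits: forbidding a short odd cycle is what naturally forces a large independent set, whereas forbidding an even cycle only bounds the edge count and requires the extra Caro--Wei step. If you wish to pursue your route, the missing ingredient is a precise definition of $H$ for which $C_{2\ell}$-freeness provably follows from your local expansion and for which $\alpha(H)\le\alpha(G)$; as written, that gap is genuine.
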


In the general case of $k \ge 4$, Linial and Rabinovich and independently Alon and Sudakov show that an $n$-vertex graph satisfying $\alpha_m(G) \ge r$ must have $\alpha(G) \ge \Omega(n^{\frac{1}{k-1}}).$  We improve the exponent in these bounds for the first half of the range for any $k$.

\begin{thm}\label{thm:main-m-3}
Let $k=\ceil{\frac{m}{r-1}}$ and let us assume $m \le (k-\frac{1}{2})(r-1).$ Then any $n$-vertex graph $G$ satisfying $\alpha_m(G) \ge r$ has  $\alpha(G) \ge \Omega(n^{\frac{1}{k-3/2}})$.
\end{thm}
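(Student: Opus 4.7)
The plan is to use the Ramsey-theoretic reduction sketched in the introduction. Let $H := K_k[r-1,\ldots,r-1,t]$ be the complete $k$-partite graph with $k-1$ parts of size $r-1$ and one part of size $t$. Since $H$ has $m$ vertices and $\alpha(H) = r-1$, any copy of $H$ in $G$ would exhibit an $m$-subset of $V(G)$ with independence number $\le r-1$, contradicting $\alpha_m(G) \ge r$. Hence $G$ is $H$-free, and it suffices to show the off-diagonal Ramsey bound $R(H, K_s) = O(s^{k-3/2})$, from which $\alpha(G) \ge \Omega(n^{1/(k-3/2)})$ follows by taking $s = \alpha(G)+1$.

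I would establish this Ramsey bound by induction on $k$. The base case $k=3$ is handled by Proposition 1.1: the hypothesis $t \le (r-1)/2$ forces $\ell := \lfloor (r-1)/t \rfloor + 1 \ge 3$, so $\alpha(G) \ge \Omega(n^{1-1/\ell}) \ge \Omega(n^{2/3})$, which matches $n^{1/(k-3/2)}$.

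For the inductive step, take a graph $\Gamma$ with $\alpha(\Gamma) < s$, and set $j := t + \lceil (r-1)/2 \rceil$. Search for a clique $K_j \subseteq \Gamma$ with large common neighborhood $N$. A direct counting argument then shows $\alpha_{m-j}(\Gamma[N]) \ge r$: any independent set of size $\ge r$ inside $V(K_j) \cup S$ (for $S \subseteq N$ of size $m-j$) cannot meet the clique $V(K_j)$, since every vertex of $N$ is adjacent to every vertex of $K_j$, so no independent set can mix the two; hence it lies entirely in $S$. The resulting parameters $(m-j,\, r,\, k-1,\, \lfloor (r-1)/2 \rfloor)$ satisfy the hypothesis of the theorem for $k-1$, so by induction $\Gamma[N]$ contains a copy of $H' := K_{k-1}[r-1,\ldots,r-1,\lfloor (r-1)/2 \rfloor]$. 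Distributing the $j$ vertices of $K_j$ into the ``missing slots'' of $H$ relative to $H'$ --- the $\lceil (r-1)/2 \rceil$ extra vertices needed to bring $H'$'s last part up to size $r-1$, together with the $t$ vertices of $H$'s additional last part --- yields a copy of $H$ in $\Gamma$, as required.

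The main obstacle is obtaining a sufficiently large common neighborhood $|N(K_j)|$. The naive greedy argument (iteratively maximizing degrees using $\Delta(\Gamma) \ge n/s$) yields only $|N(K_j)| \ge cn/s^j$; substituting into the induction then gives $n \le C s^{k-5/2+j}$, which falls short of the target $n \le C s^{k-3/2}$ as soon as $j \ge 2$. Achieving the tighter bound $|N(K_j)| \gtrsim n/s$ needed to close the induction requires a Kruskal--Katona style averaging over $K_{j+1}$-counts in $\Gamma$, combined with the edge-density estimate $e(\Gamma) \ge \binom{n}{2}/(s-1)$ coming from $\alpha(\Gamma)<s$. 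Special care is needed in sparse regimes --- such as the near-bipartite extremal constructions that are sharp for $(m,r)=(7,3)$ with $\alpha \approx n^{2/5}$ --- where higher clique counts may vanish and the argument must instead lean on bipartite-Ramsey or book-graph Ramsey bounds rather than purely clique-based extractions.
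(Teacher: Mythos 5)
Your reduction to the local property is set up correctly: if $K_j$ is a clique and $N$ its common neighbourhood, then any $S\subseteq N$ with $|S|=m-j$ and $\alpha(\Gamma[S])<r$ would combine with $V(K_j)$ to give $m$ vertices spanning no $I_r$, so indeed $\alpha_{m-j}(\Gamma[N])\ge r$; and the parameters $(m-j,r)$ with $j=t+\lceil(r-1)/2\rceil$ do satisfy the theorem's hypothesis with $k$ replaced by $k-1$. (Your phrasing ``$\Gamma[N]$ contains a copy of $H'$'' is off --- the inductive hypothesis gives a lower bound on $\alpha(\Gamma[N])$, not a copy of the specific complete multipartite graph --- but the logic survives because whatever $m'$-vertex graph with $\alpha\le r-1$ you extract from $N$ still combines with $K_j$ to contradict $\alpha_m(\Gamma)\ge r$.)

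The fatal gap is exactly where you flag it: you need some $K_j$ with $|N(K_j)|\gtrsim n/s$, with $j\ge 2$. This is not attainable. In a graph with $\alpha(\Gamma)<s$, a Kruskal--Katona-flavoured count (this is precisely the paper's \Cref{lem:turan-lb}: $t_{i}/t_{i-1}\ge n/(i\,\alpha^{i-1})-1$, applied iteratively) shows that the \emph{average} common neighbourhood of a $K_j$ has size roughly $n/s^{j}$, and the paper's matching upper bound (\Cref{lem:turan-ub}) shows this is essentially tight for $K_k$-free graphs with small $\alpha$. So even the best $K_j$ will generically have $|N(K_j)|\approx n/s^{j}$, which for $j\ge 2$ is short of $n/s$ by a factor of $s^{j-1}$. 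Plugging $n/s^j$ into your recursion gives $n\lesssim s^{k-5/2+j}$, i.e.\ $\alpha(G)\gtrsim n^{1/(k-5/2+j)}$. For $j=2$ (the relevant case when $r=3$, $m=2k-1$) this is $n^{1/(k-1/2)}$, which is \emph{worse} than the previously known $n^{1/(k-1)}$. There is no further averaging that rescues $n/s$ because pseudorandom graphs with $\alpha\approx s$ have concentrated codegrees; the ``book-graph Ramsey'' escape hatch you gesture at faces the same obstruction.

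The paper does not take a ``reduce $k$ by one via common neighbourhoods of cliques'' route. Instead it first reduces, via \Cref{lem:reduction}, to the single case $r=3$ with $m=2k-1$, and then forbids the much sparser graph $H_{2k-1}$ (a blow-up of $C_5$ with parts $1,k-2,1,1,k-2$), rather than the complete $k$-partite $K_k[2,\dots,2,1]$ you propose --- note $K_k[2,\dots,2,1]\supseteq H_{2k-1}$, so forbidding $H_{2k-1}$ is a strictly stronger condition and encodes more structure. The extra structure is then used not by recursing in $k$, but by a fixed-depth argument: find one vertex $v$, build a large family of vertex-disjoint $K_{k-1}$'s each with $k-2$ vertices in $N(v)$, and observe that $H_{2k-1}$-freeness forces the ``tips'' (one vertex per $K_{k-1}$, outside $N(v)$) to span an independent set. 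The quantitative engine behind finding that family is precisely \Cref{lem:turan-lb,lem:turan-ub} combined with an expansion/cleaning argument inside the common neighbourhoods of $K_{k-3}$'s --- none of which relies on, or yields, the $n/s$ codegree you would need.
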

Going beyond $k-2$ in the denominator of the exponent in the above theorem seems likely to require an improvement over the best-known upper bounds on Ramsey numbers, which have not seen an improvement in the exponent since the initial paper of Erd\H{o}s and Szekeres \cite{erdos-szekeres} from 1935. This means our result is in some sense halfway between the previously best bound and the Ramsey barrier. 

The key part of the above result is actually the special case of $r=3$. This is due to an easy observation which allows us to generalise any improvement in this case to the first half of the range as above, for any $r$. The first interesting instance here, which actually lead us to the general improvements above, is $m=7$ and $r=3$ in which case we can obtain an even better bound. Studying this case was explicitly proposed by Erd\H{o}s and Hajnal \cite{erdos-problems} who observed that any graph $G$ on $n$ vertices with $\alpha_7(G) \ge 3$ must have $\alpha(G) \ge \Omega(n^{1/3})$ and that such a graph $G$ exists with $\alpha(G) \le O(n^{1/2})$. They conjectured that neither of these bounds is tight. Our next result confirms their first conjecture.
\begin{thm}\label{thm:main-7-3}
Any $n$-vertex graph $G$ with $\alpha_7(G) \ge 3$ has $\alpha(G) \ge n^{5/12-o(1)}$.
\end{thm}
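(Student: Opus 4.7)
My first step would be to reduce to the case where $G$ is $K_4$-free. If $G$ contains a $K_4$ on vertices $\{a,b,c,d\}$, then for any three vertices $\{e,f,g\}$ outside, the seven-vertex set forces an independent set of size three using at most one vertex of the $K_4$ (since $\alpha(K_4)=1$), which means $\{e,f,g\}$ contains an independent pair. Thus $G\setminus\{a,b,c,d\}$ is triangle-free, and the Ajtai--Koml\'os--Szemer\'edi / Shearer bound gives $\alpha(G)\ge\Omega(\sqrt{n\log n})\gg n^{5/12}$. From here on I assume $G$ is $K_4$-free.

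Under $K_4$-freeness together with $\alpha_7(G)\ge 3$ I would extract the structural consequences the argument rests on. Every neighborhood $N(v)$ is triangle-free; for an edge $uv$ the common neighborhood $N(u)\cap N(v)$ is an independent set, and hence has size at most $\alpha(G)$; for a non-edge $uv$, applying the hypothesis to $\{u,v\}\cup S$ with $|S|=5\subseteq N(u)\cap N(v)$ forces $\alpha_5(N(u)\cap N(v))\ge 3$, so together with triangle-freeness the non-edge common neighborhood is $(C_3,C_5)$-free. Moreover, any induced $C_5\subseteq G$ has at most one common neighbor in $G$, since the seven-vertex set formed by the $C_5$ and two common neighbors would have $\alpha\le 2$.

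Set $s:=\alpha(G)$ and suppose for contradiction $s<n^{5/12-o(1)}$. These constraints translate into: maximum degree at most $\widetilde O(s^2)$ (Shearer applied inside triangle-free $N(v)$), edge codegree at most $s$, and non-edge codegree at most $\widetilde O(s^{3/2})$ via Sudakov's bound $R(C_5,K_t)\le\widetilde O(t^{3/2})$ applied to the $(C_3,C_5)$-free common neighborhood. Following the paper's general strategy of ``new forbidden graphs with better Ramsey numbers'', I would then identify a specific graph $H$ with $2$-density $m_2(H)=12/5$ --- a natural candidate being the seven-vertex, thirteen-edge graph obtained from $K_{3,4}$ by adding one edge inside its part of size three, since this has $m_2=12/5$ and is $K_4$-free --- and aim to show that $G$ either contains $H$ (in which case the independence of $N(u)\cap N(v)$ for adjacent $u,v$, together with the ``at most one common neighbor of a $C_5$'' rigidity, forces an independent set of size $\ge s$) or is $H$-free. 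In the latter case a matching Ramsey upper bound $R(H,K_t)\le\widetilde O(t^{12/5})$ yields $n\le\widetilde O(s^{12/5})$, i.e.\ $\alpha(G)\ge n^{5/12-o(1)}$.

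The main obstacle is this last step: pinpointing the correct $H$, proving the matching Ramsey upper bound $R(H,K_t)\le\widetilde O(t^{12/5})$, and showing that $G$ must contain $H$ whenever $\alpha(G)<n^{5/12-o(1)}$. This is where the extra rigidity ``every induced $C_5\subseteq G$ has at most one common neighbor'' should be decisive, sharpening the $\widetilde O(s^{3/2})$ non-edge codegree bound coming from $C_5$-freeness alone and driving the exponent from the $5/2$ associated with Ramsey for $K_4$ (as in \Cref{thm:main-m-3}) down to the $12/5$ needed for the target $n^{5/12}$ lower bound on $\alpha(G)$.
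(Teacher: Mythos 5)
Your reduction to the $K_4$-free case is correct and matches the paper's opening move, and the structural observations you extract from $\alpha_7(G)\ge 3$ are valid: triangle-free neighbourhoods, edge codegree at most $\alpha(G)$, the $(C_3,C_5)$-free structure of non-edge codegree sets, and the fact that an induced $C_5$ has at most one common neighbour. But from that point the proposal is an outline, not a proof, and you say so yourself: choosing the right $H$, proving $R(H,K_t)\le \widetilde O(t^{12/5})$, and establishing the dichotomy that $G$ either contains $H$ (yielding a large independent set) or is $H$-free are all left undone. That is the entire content of the theorem.

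There are also concrete obstructions to the plan as stated. Your candidate $H$ (the graph $K_{3,4}$ with one edge added inside the part of size three) has $\alpha(H)=4$, so unlike the paper's $H_7$ (a blow-up of $C_5$ with $\alpha(H_7)=2$) it is \emph{not} automatically forbidden by $\alpha_7(G)\ge 3$; your dichotomy therefore has to prove that even a single copy of $H$ forces an independent set of size $\Omega(\alpha)$, and no such argument is given. More seriously, the inequality $R(H,K_t)\le \widetilde O(t^{m_2(H)})$ is not a general principle one can invoke: it is open already for $H=K_4$, where $m_2(K_4)=5/2$ yet the best known upper bound on $R(4,t)$ has exponent $3$ up to polylogarithmic factors. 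The $2$-density controls random \emph{lower} bounds on Ramsey numbers, not upper bounds, so assuming the exponent-$12/5$ upper bound for your $H$ is effectively assuming the conclusion. The paper's actual proof does not route through a clean Ramsey bound for any single graph: it forbids $H_7$ together with $K_4$, sets up an expansion structure inside neighbourhoods, counts directed $v$-triangles $T_4$, proves the lower bound $T_4\ge\Omega(n^2)$ and a counting upper bound via the partition of $N_\triangle(v)$ and the grid lemma, and derives a contradiction by double counting $K_4$-minus-an-edge configurations. None of that machinery appears in your sketch, and I do not see how to supply the missing Ramsey ingredient without essentially reproving the paper's argument.
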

By the aforementioned observation, this actually gives the same improved bound for the first half of the range for any instance with $k=4,$ i.e.\ for $3r-2\le m \le 3.5(r-1)$.

To prove an upper bound on the minimum possible $\alpha(G)$ among all graphs with $\alpha_m(G)\ge r$, one needs to find a graph which has small independence number while having big independent sets spread around everywhere. Given the close relation of our problem to Ramsey numbers, random graphs are natural candidates for such examples. A binomial random graph $\G(n,p)$ is an $n$-vertex graph in which we include every possible edge independently with probability $p$. Understanding $\alpha_m(G)$ in $G \sim \G(n,p)$ turns out to be an interesting problem in its own right. Observe that the requirement $\alpha_m(G)\ge r$ may be rephrased as stating that $G$ contains no copy of an $m$-vertex graph $H$ with $\alpha(H)\le r-1$ as a subgraph.
A standard application of Lov\'asz local lemma tells us that if we are only forbidding a single graph $H$, then the largest $p$ we can take is controlled by the $2$-density of $H$ (see \Cref{sec:2-density} for the definition of $2$-density and more details). If we are instead forbidding a family of graphs the correct parameter turns out to be the minimum of the $2$-densities over all graphs in our family. This reduces our problem to the following natural extremal question, which we propose to study. What is the minimum value of the $2$-density of an $m$-vertex graph $H$ with $\alpha(H) \le r-1$? If we denote the answer to this question by $M(m,r)$ the above discussion leads us to the following reduction.

\begin{restatable}{prop}{equiv}\label{prop:m2-loc-ind}
Let $m,r$ be fixed, $m \ge 2r-1 \ge 3$ and $M=M(m,r)$. Then for any $n$ there exists an $n$-vertex graph $G$ with $\alpha_m(G)\ge r$ and $\alpha(G) \le n^{1/M+o(1)}$.
\end{restatable}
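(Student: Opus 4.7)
The plan is to produce $G$ by the probabilistic method with an alteration. Fix a small $\varepsilon>0$ and set $p = n^{-1/M - \varepsilon}$; let $G_0 \sim \mathcal{G}(n,p)$.

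First I would reformulate the condition $\alpha_m(G)\geq r$ as a forbidden-subgraph statement: it is equivalent to $G$ containing no copy, as a subgraph, of any member of the (finite) family $\mathcal{F}^*$ of subgraph-minimal $m$-vertex graphs $H$ with $\alpha(H)\leq r-1$, each of which satisfies $m_2(H)\geq M$ by the very definition of $M$. For each such $H$, fix a subgraph $H'\subseteq H$ on at least $3$ vertices attaining the maximum in the definition of $m_2(H)$, so that $(e(H')-1)/(|V(H')|-2) = m_2(H) \geq M$. Since $H'\subseteq H$, any graph that avoids $H'$ automatically avoids $H$; I therefore only need to destroy, in $G_0$, copies of the finite family $\mathcal{H} := \{H' : H\in\mathcal{F}^*\}$.

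For the independence number, the standard first moment computation gives
\[
\alpha(G_0) \leq (2+o(1))\tfrac{\log n}{p} \leq n^{1/M + o(1)} \quad \text{w.h.p.}
\]
For the forbidden cores, the bound $(|V(H')|-2)/(e(H')-1) \leq 1/M < 1/M + \varepsilon$ together with the choice of $p$ yields, for every $H'\in\mathcal{H}$,
\[
\mathbb{E}\bigl[\#\text{copies of } H' \text{ in } G_0\bigr] \;=\; \Theta\!\bigl(n^{|V(H')|} p^{e(H')}\bigr) \;=\; O\!\bigl(n^{2 - 1/M - \Omega(\varepsilon)}\bigr) \;=\; o(n^2 p),
\]
i.e.\ far fewer than the expected number of edges of $G_0$. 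I would then apply a randomised deletion: for each copy of each $H'\in\mathcal{H}$ in $G_0$, delete a uniformly random edge of that copy, with all choices independent. Letting $G$ be the resulting graph, it is $H'$-free for every $H'\in\mathcal{H}$ (and hence $H$-free for every $H\in\mathcal{F}^*$) by construction, so $\alpha_m(G)\geq r$.

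The main obstacle is to verify that $\alpha(G)\leq n^{1/M+o(1)}$ still holds after the alteration, since edge-deletion can only \emph{increase} $\alpha$. The saving grace is that the same computation shows every fixed edge of $K_n$ is contained in $O(n^{|V(H')|-2}\,p^{e(H')-1}) = O(n^{-\Omega(\varepsilon)})$ copies of each $H'$ in expectation, so every edge of $G_0$ survives the deletion with conditional probability $1 - O(n^{-\Omega(\varepsilon)})$. A Chernoff concentration on the number of edges of $G$ contained inside a given $k$-subset of $V(G)$, with $k = \lceil n^{1/M+2\varepsilon}\rceil$, together with a union bound over the $\binom{n}{k}$ such subsets, then shows that w.h.p.\ every $k$-subset spans at least one edge of $G$. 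Hence $\alpha(G)\leq k = n^{1/M + o(1)}$, and letting $\varepsilon = \varepsilon(n)\to 0$ slowly with $n$ gives the claim.
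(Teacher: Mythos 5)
Your setup --- reducing to the finite family $\mathcal{H}$ of strictly $2$-balanced cores $H'$ with $m_2(H')\ge M$, the choice $p\approx n^{-1/M}$, and the first-moment bookkeeping --- runs parallel to the paper's. Where you diverge is the alteration step. The paper applies the asymmetric Lov\'asz Local Lemma directly to $\mathcal{G}(n,p)$ to get, with positive probability, a single graph that avoids every $H'\in\mathcal{H}$ and has $\alpha(G)=O(n^{1/M}\log n)$, whereas you delete a random edge from each copy of each $H'$ and then try to union-bound over $k$-sets.

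The gap is the sentence claiming that ``a Chernoff concentration on the number of edges of $G$ contained inside a given $k$-subset'' plus a union bound over $\binom{n}{k}$ sets closes the argument. The surviving-edge indicators inside a $k$-set $S$ are not independent (nor even approximately so): a single copy of some $H'$ can contain several pairs from $\binom{S}{2}$; which copies exist is itself a complicated functional of $G_0$; and flipping one edge of $G_0$ can create or destroy a large number of copies, so bounded-difference inequalities also fail with any useful constant. What you actually need is that, for \emph{every} $S$, the number of copies of graphs from $\mathcal{H}$ having an edge inside $S$ --- which upper bounds the number of deleted edges in $S$, since each copy deletes only one edge --- stays below $|E(G_0)\cap\binom{S}{2}|\approx\binom{k}{2}p$, with failure probability $\exp(-\Omega(k\log n))$ to beat $\binom{n}{k}$. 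This is an upper-tail event for a subgraph-count-type functional of $G_0$, and the upper tail of such counts in $\mathcal{G}(n,p)$ is governed by localised ``planted'' structures; it is nowhere near $\exp(-\Omega(k\log n))$, and no standard concentration inequality supplies the bound you implicitly invoke. Switching to vertex deletion (which cannot increase $\alpha$) would dodge the concentration issue but creates a new one: since $M>1$ and $1/M<1$, the expected number of copies of the cores is $\Theta(n^{2-1/M-O(\varepsilon)})$, which is $\gg n$ whenever $M$ is not extremely close to $1$, so one would delete more vertices than one started with. The local lemma, as used in the paper, is invoked precisely because it sidesteps both obstacles simultaneously.
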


The value of $M(m,r)$, and hence also our upper bounds for the local to global independence number problem, are mostly controlled by the same parameter $k=\ceil{\frac{m}{r-1}}$ as before. Some intuition behind this, suggested by Linial and Rabinovich \cite{L-R}, is that a natural example of an $m$ vertex graph with independence number at most $r-1$ is a vertex disjoint union of $r-1$ cliques with sizes as equal as possible (in other words complement of a Tur\'an graph on $m$ vertices with no clique of size $r$). This graph clearly has no independent set of size $r$ and we picked the clique sizes as equal as possible in order to minimise the $2$-density. Our parameter $k$ is simply the size of a largest clique in this example. 

Turning to the results, we start once again with the range $2r-1\le m\le 3r-3$, i.e.\ $k=3$. Here Linial and Rabinovich show that there exist $n$-vertex graphs $G$ satisfying $\alpha_m(G) \ge r$ and $\alpha(G) \le n^{1-1/(8r-4)}.$ We improve the exponent in this bound for the whole range. Moreover, the improvement in the exponent is by a constant factor independent of $r$, towards the end of the range.
 
\begin{prop}\label{prop-alpha-k=3}
Let $m\ge 2r-1 \ge 3,$ for any $n$ there exists an $n$-vertex graph $G$ satisfying  $\alpha_m(G) \ge r$ with $\alpha(G) \le n^{1-\frac{1}{2r-2}+o(1)}$ and if $m\ge 3r-4>2$ with $\alpha(G) \le n^{\frac{3}{5}+\frac{2}{5r-13}+o(1)}.$
\end{prop}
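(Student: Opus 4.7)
The plan is to apply \Cref{prop:m2-loc-ind} after establishing lower bounds on the extremal $2$-density $M(m,r)$. For the first bound I will show $M(m,r) \ge (2r-2)/(2r-3)$ whenever $m \ge 2r-1$; for the second, an improved lower bound $M(m,r) \ge 5(5r-13)/(15r-49)$ valid in the range $m \ge 3r-4$. In both cases, plugging the bound into \Cref{prop:m2-loc-ind} produces the claimed upper bound on $\alpha(G)$.

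For the first bound, the key structural claim is: every $m$-vertex graph $H$ with $\alpha(H) \le r-1$ and $m \ge 2r-1$ contains a cycle of length at most $2r-1$. Given such a cycle $C_g$ one takes it as the optimising subgraph in the definition of $m_2(H)$, and since $(g-1)/(g-2)$ is decreasing in $g$ one gets $m_2(H) \ge (g-1)/(g-2) \ge (2r-2)/(2r-3)$. The cycle claim itself is proved by contradiction: assume $H$ has girth at least $2r$. A shortest cycle in any component must then be chordless, because a chord would split it into two strictly shorter cycles, violating the girth bound. Hence each connected component $C$ is either a tree (bipartite, so $\alpha(C) \ge \lceil |V(C)|/2 \rceil$) or contains an induced $C_g$ with $g \ge 2r$ (so $\alpha(C) \ge \lfloor g/2 \rfloor \ge r$). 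Summing over components yields $\alpha(H) \ge \lceil m/2 \rceil \ge r$ in the tree-only case and $\alpha(H) \ge r$ otherwise, both contradicting $\alpha(H) \le r-1$.

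For the second bound, the plan is to leverage the $r-3$ extra vertices beyond $2r-1$ to locate a denser substructure in $H$. Natural candidates whose $2$-densities can match or exceed the target $5(5r-13)/(15r-49)$ are theta graphs (two internally disjoint paths sharing endpoints), a short cycle carrying an additional chord, and bipartite pieces such as $K_{2,3}$ whose $2$-density $5/3$ is exactly the $r\to\infty$ limit of the target bound. I expect the argument to split into cases according to the girth of $H$: when the girth is appreciably smaller than $2r-1$, the short cycle coming from the first-bound argument is already dense enough, while in the near-extremal case of girth exactly $2r-1$ one must combine the cycle $C_{2r-1}$ with extra edges forced by Tur\'an-type counting on the additional $(r-3)$ vertices. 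The main obstacle is the near-extremal regime, where the baseline cycle only yields density $(2r-2)/(2r-3)$ and the additional dense substructure must be extracted carefully from the $\alpha(H) \le r-1$ constraint applied to larger subgraphs; pinning down the right substructure to match the precise exponent $5(5r-13)/(15r-49)$ appears to be the crux.
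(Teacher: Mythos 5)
Your treatment of the first exponent is correct and essentially matches the paper's. The paper proves $M(2r-1,r)=m_2(C_{2r-1})=1+\tfrac{1}{2r-3}$ by exactly the argument you give (a cycle of length $\ell\le 2r-1$ has $2$-density at least that of $C_{2r-1}$, and a forest on $\ge 2r-1$ vertices is bipartite and hence has an independent set of size $\ge r$), and then feeds this into \Cref{prop:m2-loc-ind}; your version generalises harmlessly to $m>2r-1$ via the girth-$\ge 2r$ dichotomy, though this is not needed since $M(m,r)$ is monotone in $m$.

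The second exponent is where the proposal has a genuine gap. You correctly reduce the task to a lower bound on $M(3r-4,r)$, but then only list candidate substructures (theta graphs, cycles with chords, $K_{2,3}$) and acknowledge you cannot carry out the ``near-extremal'' case. The paper's argument for $M(3r-4,r)\ge \tfrac{5}{3}-\tfrac{1}{r-2}$ is quite different from finding a single dense subgraph: one observes that if $G$ contains a triangle or a subgraph of minimum degree $\ge 4$ then $m_2(G)\ge 2$ and we are already done, so one may assume $G$ is $3$-degenerate and triangle-free; under these hypotheses, a variant of Jones's theorem on independence in sparse triangle-free graphs (\Cref{appendixC}) gives the global edge count $e(G)\ge 6m-13(r-1)-1$, which plugged into $d_2$ yields the stated $2$-density. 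This is a Ramsey-type counting argument, not a forbidden-subgraph search, and none of the structures you propose appear in it.

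Separately, be careful with the target you back out from the statement: to obtain the claimed exponent $\tfrac{3}{5}-\tfrac{2}{5r-13}$ one would need $M(3r-4,r)\ge\tfrac{5(5r-13)}{15r-49}>\tfrac53$, but the chain graph $H_r$ shows $M(3r-4,r)\le m_2(H_r)=\tfrac53-\tfrac{1}{9(r-2)}<\tfrac53$, so that value of $M$ is unattainable. In fact the paper's own lower bound $M(3r-4,r)\ge\tfrac{5r-13}{3r-6}$ yields exponent $\tfrac{3r-6}{5r-13}=\tfrac35+\tfrac{9/5}{5r-13}$, which lies above $3/5$; the displayed expression in the proposition appears to contain a sign error. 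So even with a complete argument the precise formula you were aiming at would not be the one to prove.
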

These bounds follow from our results on $M(2r-1,r)$, which we determine precisely and $M(3r-4,r)$ which we determine up to lower order terms. This means that in terms of using random graphs as examples, these bounds are essentially best possible for $m=2r-1,3r-4$. We can obtain a constant factor improvement in the exponent for about $1/3$ of the range, but since we believe our current argument does not give the best possible answer, in terms of $M(m,r)$, for the whole range we leave this open for future research. 

The problem of determining $M(3r-4,r)$ is closely related to a well-studied problem of finding large independent sets in sparse triangle-free graphs. Perhaps the most famous result in this direction is due to Ajtai-Koml\'os-Szemer\'edi \cite {AKS} and Shearer \cite{shearer}, but for our problem earlier results of Staton \cite{staton} and Jones \cite{jones} turn out to be more relevant. These results are part of a very active research area of studying graphs having no cliques of size $k$ nor independent sets of size $r$ but which have potentially much fewer vertices than the corresponding Ramsey number $R(k,r)$. Our problem of lower bounding $M(m,r)$ falls under this framework since we can always assume that our graphs, in addition to having no independent sets of size $r$, are also $K_k$-free or the $2$-density is already large. 
We point the interested reader to classical papers \cite{staton,bollobas-tucker} and numerous papers citing them.

We now turn to the general case of $k \ge 4$. Let us begin with the initial instance, so when $r=3$. Unfortunately, here the results for $r=3$ do not immediately generalise as they did in the case of lower bounds. They do however provide a starting point, which serves as a basis for more general results. Here we determine $M(m,3)$ precisely for all $m$, which allows us to improve exponents in the previously best bounds of Linial and Rabinovich \cite{L-R}. They showed there are $n$-vertex graphs $G$ with $\alpha_m(G) \ge 3$ and $\alpha(G) \le n^{\frac{4+o(1)}{m-4/(m-2)}}$ if $m$ is even, and $\alpha(G) \le n^{\frac{4+o(1)}{m-3/(m-2)}}$ if $m$ is odd.

\begin{thm}\label{thm:main-ub-m-3}
For any $n$ there exists an $n$-vertex graph $G$ satisfying $\alpha_m(G) \ge 3$  with $\alpha(G) \le n^{\frac{4+o(1)}{m+2}}$ if $m$ is even, and $\alpha(G) \le n^{\frac{4+o(1)}{m+3-13/\sqrt{m}}}$ when $m \ge 5$ is odd $($here both terms $o(1)\to 0$ as $n\to \infty)$.
\end{thm}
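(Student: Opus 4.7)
The plan is to apply Proposition~\ref{prop:m2-loc-ind}, which reduces our task to lower-bounding the extremal parameter $M(m,3)$: to obtain the claimed upper bound on $\alpha(G)$ it suffices to prove $M(m,3)\ge (m+2)/4$ for even $m$ and $M(m,3)\ge (m+3-13/\sqrt m)/4$ for odd $m\ge 5$. Equivalently, we must show that every $m$-vertex graph $H$ with $\alpha(H)\le 2$ admits a subgraph $H'\subseteq H$ whose $2$-density is at least this target.

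The key structural input is that $\alpha(H)\le 2$ is equivalent to $\overline H$ being triangle-free, which in turn is equivalent to saying that in $H$ the non-neighborhood of every vertex forms a clique. In particular $\Delta(\overline H)\le\omega(H)=:k$, so $e(\overline H)\le mk/2$ and $e(H)\ge \binom m 2 - mk/2$. Two natural candidate witnesses present themselves: (a) $H$ itself, whose $2$-density is at least $(e(H)-1)/(m-2)$, and (b) a maximum clique $K_k\subseteq H$, whose $2$-density is exactly $(k+1)/2$. The argument is a dichotomy on $k$ built from these two estimates.

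For even $m$ the dichotomy closes immediately: if $k\ge m/2$, candidate~(b) with $s=m/2$ gives exactly $(m+2)/4$, whereas if $k\le m/2-1$ then $e(H)\ge m^2/4$ forces candidate~(a) to deliver at least $(m+2)/4$. The graph $2K_{m/2}$ attains this, so $M(m,3)=(m+2)/4$. For odd $m\ge 5$ the same dichotomy still handles the extreme regimes: $k\ge (m+1)/2$ gives candidate~(b) value $(m+3)/4$ via $K_{(m+1)/2}$, and $k\le (m-3)/2$ gives candidate~(a) value $(m+3)/4+1/(2(m-2))$. Both comfortably exceed the target $D:=(m+3-13/\sqrt m)/4$.

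The only difficult case is $k=(m-1)/2$, which is where the $13/\sqrt m$ loss is born: here (b) delivers only $(m+1)/4$ and (a) only $(m+1)/4-1/(2(m-2))$, each short of $D$ by a constant. My plan is to bridge this gap with an \emph{augmented clique} $H'=C\cup\{v\}$, where $C$ is a maximum clique and $v\notin C$ maximises $d_v:=|N(v)\cap C|$; a direct computation gives $2$-density $\tfrac{m+3}{4}-\tfrac{2(k-d_v)}{m-3}$, which meets $D$ as soon as some $v$ satisfies $d_v\ge k-\Theta(\sqrt m)$. In the contrary case every $v\notin C$ contributes $\Omega(\sqrt m)$ non-edges to $C$ in $H$; pigeonholing over $C$ produces a vertex $u\in C$ with $\Omega(\sqrt m)$ non-neighbors in $V\setminus C$, and since the non-neighborhood of a single vertex is a clique, one obtains a second clique $C_u\subseteq V\setminus C$ of size $\Omega(\sqrt m)$. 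Combining $C_u$ with $C$, or feeding the surplus edges forced in $H[V\setminus C]$ by the minimum-degree condition into the whole-graph estimate, then pushes the $2$-density of a suitably chosen subgraph above $D$. Balancing the ``augmented clique'' estimate against this ``second clique / extra edges'' estimate is the main technical obstacle and is what produces the $13/\sqrt m$ correction in the final exponent.
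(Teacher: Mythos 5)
Your reduction to $M(m,3)$ via \Cref{prop:m2-loc-ind} is exactly the route the paper takes, and your treatment of the even case reproduces the paper's \Cref{thm:m2k3-lb}: note that $\alpha(H)\le 2$ gives $\Delta(\overline H)\le\omega(H)$, split on whether $\omega(H)\ge m/2$, and in the second branch use the resulting minimum-degree bound on $H$ itself. For odd $m$ the two extreme regimes $\omega(H)\ge(m+1)/2$ and $\omega(H)\le(m-3)/2$ also go through as you say, so the whole argument hinges on the case $\omega(H)=(m-1)/2$. This is precisely where the paper invests its main technical effort (\Cref{lem:up-bip}), via a structural analysis showing that a triangle-free graph on $2k-1$ vertices in which every $k$ vertices span many edges must be close to a blow-up of $C_5$, with a case analysis on the blow-up parts.

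Your proposed dichotomy for that hard case does not close as stated. The ``augmented clique'' estimate is fine: if some $v\notin C$ has at most $O(\sqrt m)$ non-neighbours in $C$ you are done. The problem is the contrary branch. There you know every $v\in B:=V\setminus C$ has at least $t=\Theta(\sqrt m)$ non-neighbours in $C$, hence at most $k-t$ non-neighbours in $B$, giving $\delta(H[B])\ge t$ and $e(H[B])\ge(k+1)t/2$. But neither of your two proposed uses of this information suffices. Taking $H[B]$ itself gives $d_2(H[B])\approx t/2=\Theta(\sqrt m)$, far below the target $\approx m/4$. Feeding the surplus into the whole-graph estimate requires an \emph{upper} bound on the non-edges between $C$ and $B$; the only bound available from $\Delta(\overline H)\le k$ is $\le k^2$, and plugging that in together with $e(H[B])\ge(k+1)t/2$ one finds the whole-graph bound reaches $(m+3)/4 - o(1)$ only once $t\approx m/2$, not $t\approx\sqrt m$. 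The clique $C_u$ of size $\Theta(\sqrt m)$ obtained by pigeonholing is also too small to combine with $C$ into a useful witness, since $C\cup C_u$ is not a clique and its density is dominated by the cross-pair structure you have not controlled. So the two arms of your dichotomy do not meet, and there is a genuine gap exactly in the regime where the $13/\sqrt m$ term has to be earned; you correctly flag this as the ``main technical obstacle,'' but the specific route you sketch does not resolve it. The paper's \Cref{lem:up-bip} is doing essentially non-replaceable work here: it extracts two nearly-complementary independent sets in $\overline H$, shows the residual graph is a sub-blow-up of $C_5$ plus a star, and then optimises over the part sizes; this is what converts the ``every $k$ vertices have $\ge t+1$ missing edges'' hypothesis (your $t$) into the quadratic loss $(k-1)^2-t^2+1$ that, after balancing $t$, produces the $\Theta(1/\sqrt m)$ correction.
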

We remark that in the even case any improvement of our exponent, in terms of $m$, provably leads to improvement over the best-known lower bounds on Ramsey numbers and in the odd case without improving the Ramsey numbers one can only improve the term $13/\sqrt{m}$. 

Once again our arguments in this particular regime show that the problem of determining $M(m,r)$ is related to yet another well-studied problem. Namely, the stability problem for Tur\'an's theorem first considered by Erd\H{o}s, Gy\H{o}ri and Simonovits in \cite{erdos-bip}. While one can use their results to obtain good bounds on $M(m,3)$ obtaining precise answers requires a different, more careful argument. 

In the fully general case we determine $M(m,r)$ up to lower order terms (where $r$ is considered fixed and $m$ large), giving us the following result. 

\begin{thm}\label{general-ub}
Let us assume $m$ is sufficiently larger than $r$ and set $k=\ceil{\frac m{r-1}}$. Then there exists $c_r>0$ such that for any $n$ there exists an $n$-vertex graph $G$ satisfying $\alpha_m(G) \ge r$ with $\alpha(G) \le n^{\frac{2+o(1)}{k+1-{c_r}/{\sqrt{k}}}}$.
\end{thm}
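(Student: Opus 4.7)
By Proposition~\ref{prop:m2-loc-ind} the theorem reduces to the $2$-density lower bound
\[ M(m,r) \ge \frac{k+1-c_r/\sqrt{k}}{2},\]
i.e.\ to showing that every $m$-vertex graph $H$ with $\alpha(H)\le r-1$ contains a subgraph $H'$ with $(e(H')-1)/(|H'|-2)\ge (k+1)/2 - c_r/(2\sqrt{k})$. Recall that the natural extremal example, namely the disjoint union of $r-1$ balanced cliques of size $\approx k$, has $2$-density exactly $(k+1)/2$, attained on any of its cliques, so $(k+1)/2$ is the right asymptotic target.

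My plan is to show that $H$ must contain a \emph{near-$K_k$}, i.e.\ a subset $S$ of $k$ vertices spanning at least $\binom{k}{2}-O_r(\sqrt{k})$ edges, since the direct computation
\[ \frac{\binom{k}{2}-O(\sqrt{k})-1}{k-2}=\frac{k+1}{2}-O(1/\sqrt{k}) \]
then delivers the required bound on $m_2(H)$. I would proceed by induction on $r$, with the trivial base $r=2$ forcing $H=K_m$. For the inductive step I would split on the maximum degree of $H$. If some vertex $v$ has $\deg(v)\ge (r-2)k$, the induced subgraph $H[N(v)]$ has $\alpha\le r-2$ on at least $(r-2)k$ vertices, so $k':=\ceil{|N(v)|/(r-2)}\ge k$ and the inductive hypothesis directly supplies a near-$K_{k'}$, which is in particular a near-$K_k$; choosing $c_r\ge c_{r-1}$ makes the error terms compose. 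The hard case is $\Delta(H)\le (r-2)k-1$, equivalently $\delta(\overline H)\ge k+1$: here one needs a quantitative stability statement for $K_r$-free graphs with large minimum degree, forcing $\overline H$ to be within $O(\sqrt{k})$ edges per class of the Tur\'an graph $T_{r-1}(m)$, so that one of the $r-1$ near-cliques of $H$ is the desired near-$K_k$.

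The main obstacle is the near-extremal case with the correct error scale $\sqrt{k}$. For $r=3$ the relevant input is essentially Andr\'asfai--Erd\H os--S\'os, since $\delta(\overline H)\ge m/2+1$ comfortably exceeds the $2m/5$ threshold that forces bipartiteness, and the even-$m$ half of Theorem~\ref{thm:main-ub-m-3} provides the template for the quantitative count. For $r\ge 4$ the difficulty is that $k+1=m/(r-1)+1$ lies well below the natural minimum-degree threshold of any direct AES-type statement for $K_r$-free graphs, so a finer argument is required; I envisage either an iterated application of the $r=3$ analysis to neighbourhood-type subgraphs, or a direct containers/Erd\H os--Simonovits style stability analysis tuned to the $\sqrt{k}$ error regime. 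The trade-off between the stability error and the extra edges one obtains in the ``not quite near-extremal'' sub-case is exactly what forces the $1/\sqrt{k}$ rate and fixes the constant $c_r$ through the induction.
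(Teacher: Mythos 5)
Your reduction to the $2$-density lower bound $M(m,r) \ge \frac{k+1}{2}-\frac{c_r}{2\sqrt k}$ via \Cref{prop:m2-loc-ind} is exactly the paper's first step, and the ``near-$K_k$'' target (a $k$-set spanning $\binom{k}{2}-O_r(\sqrt k)$ edges) is also the right object. But the inductive step you propose has a concrete error, and the tool you then reach for (AES-type minimum-degree stability) is the wrong one.

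The error: if $\alpha(H)\le r-1$ and $\deg_H(v)\ge (r-2)k$, it is \emph{not} true that $H[N(v)]$ has independence number at most $r-2$. An independent set $I\subseteq N(v)$ of size $r-1$ does not contradict $\alpha(H)\le r-1$, because $v$ is joined to every vertex of $I$, so $I\cup\{v\}$ is not independent. The statement that is true is about the \emph{non-}neighbourhood: $H\setminus(\{v\}\cup N(v))$ has independence number at most $r-2$, since $v$ extends any independent set there. One can check the falsity of your claim directly on the extremal example for $M(2k-1,3)$ (the clique-blow-up of $C_5$): the highest-degree vertex $u$ has $\deg_H(u)\ge k$, yet $N_H(u)$ meets two parts that are non-adjacent in $H$, so $\alpha(H[N_H(u)])\ge 2$. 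To make the induction go, the right split is on a \emph{small}-degree vertex and the deletion of its closed neighbourhood (this is exactly how Proposition~3.8 in the paper handles the divisible case). But even with that fix, your remaining case leaves $\delta(\overline H)$ at roughly $k\approx m/(r-1)$, which for $r\ge 4$ is well below any Andr\'asfai--Erd\H os--S\'os threshold, and you already flag that you have no argument there.

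The paper's proof of \Cref{prop-general-ub} avoids all of this. There is no induction and no degree condition. Write $t=t_{r-1}(m)-e(\overline G)$. If $t\ge \tfrac32 m$, then $G$ has so many edges that already $d_2(G)=\frac{e(G)-1}{m-2}\ge\frac{k+1}{2}$, a one-line computation via $t_{r-1}(m)\le(1-\frac1{r-1})\frac{m^2}{2}$. If $t<\tfrac32 m$, then $\overline G$ is a $K_r$-free graph very close (in \emph{edge count}) to the Tur\'an number, and the quantitative \emph{edge-count} stability theorem of Balogh--Clemen--Lavrov--Lidick\'y--Pfender (\cite{balogh2019making}, Theorem~1.3) makes $\overline G$ $(r-1)$-partite after deleting $O(t^{3/2}/m)\le O(r\sqrt m)$ edges. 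Translating back, $G$ is a union of $r-1$ cliques missing at most $O(r\sqrt m)$ edges; by pigeonhole one clique has $\ge k$ vertices, and any $k$ of them span a near-$K_k$ as required. The key point is that edge-count stability needs no minimum-degree hypothesis, which is precisely what makes the argument uniform in $r$ and lets the $\sqrt k$ rate emerge from $t^{3/2}/m$ with $t=\Theta(m)$. Your proposal, by contrast, depends on a minimum-degree stability input that simply is not available at the threshold your case split produces.
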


This improves previous upper bounds of Linial and Rabinovich from roughly $n^{\frac2{k-1}}$ to roughly $n^{\frac2{k+1}}$ and is once again essentially best possible assuming lower bounds on Ramsey numbers are tight.

While the above result requires $m$ to be large compared to $r$ some of our ideas apply for any choice of the parameters. To illustrate this we consider the case $m=20,r=5$ which was used by various researchers as a benchmark to compare their methods. Here Linial and Rabinovich show there are graphs $G$ having $\alpha_{20}(G) \ge 5$ and $\alpha(G) \le n^{18/39+o(1)}.$ Krivelevich \cite{michael-critical} improved this to $\alpha(G) \le n^{14/33+o(1)}.$ He obtains this as an application of his result on the minimum number of edges in colour-critical graphs. The best possible bound using this approach was later obtained by Kostochka and Jancey \cite{kost} who showed $\alpha(G) \le n^{18/43+o(1)}$. For comparison $39/18\approx 2.17, 33/14 \approx 2.36$ and $43/18\approx 2.39,$ while our methods allow us to improve this to $3$. That is, there exists a graph $G$ with $\alpha_{20}(G)\ge 5$ which has $\alpha(G) \le n^{1/3+o(1)}$ and once again this is best possible (up to the $o(1)$ term) without improving the lower bounds on Ramsey numbers $R(5,s)$. 

In addition to the above applications and connections, another reason which makes the study of $M(m,r)$ interesting is its relation to a random graph process. For a graph property $\mathcal{P}$ the random graph process with respect to $\mathcal{P}$ starts with an empty graph and iteratively adds a new uniformly random edge for as long as this does not violate $\mathcal{P}$. Random graph processes have been extensively studied for a variety of properties and have found numerous applications (see e.g. \cite{bohman-keevash, osthus-h-free, morris, nina-tash, matt-process,bohman-keevash2} and references therein). In our setting $M(m,r)$ controls the final density of the random process with respect to the \property{m}{r} $\alpha_m \ge r$. So $M(m,r)$ essentially controls the behaviour of this random process. 

\textbf{Organisation.} We will prove our lower bound results in \Cref{sec:lower bounds}. We start this section by proving \Cref{prop:k=3-lower-bound}. We continue by formalising in the form of \Cref{lem:reduction} the above-mentioned reduction of our general lower bound result, \Cref{thm:main-m-3}, to the case $r = 3$. We then focus on this case in \Cref{sec:2.1} to complete the proof of \Cref{thm:main-m-3}. In \Cref{sec:2.2} we prove our stronger bound in the $(m,r)=(7,3)$ case raised by Erd\H{o}s and Hajanl, namely \Cref{thm:main-7-3}. In \Cref{sec:2-density} we prove our upper bound results. We begin by proving our reduction to the $2$-density Tur\'an problem, namely \Cref{prop:m2-loc-ind}. We then switch the focus to proving our results concerning this $2$-density Tur\'an problem in \Cref{sec:turan-2density}. We begin by proving our results in the triangle-free regime in \Cref{sec:triangle-free-ub} providing us with a proof of \Cref{prop-alpha-k=3}. In \Cref{sec:independence-number-two} to solve the independence number two case which provides us with a proof of \Cref{thm:main-ub-m-3}. We complete the section with our asymptotic solution to the general problem which gives a proof of \Cref{general-ub}. In \Cref{sec:conc-remarks} we give some concluding remarks and open problems as well as the summary of our results. We also include three appendices. In \Cref{appendixC} we prove a slight modification of a result of \cite{jones} which we need in the proof of the second part of \Cref{prop-alpha-k=3}. In \Cref{appendixB} we completely solve a benchmark case $(m,r)=(20,5)$ of the Tur\'an $2$-density problem. Finally, in \Cref{appendixC} we prove some upper bounds for the Tur\'an $2$-density problem which establish the tightness of our results in this direction but are not necessary for our reduction to the local to global independence number problem. 

\textbf{Notation.} We will denote by $|G|$ the number of vertices in $G$ and by $e(G)$ the number of edges in $G$. For $v \in G$ we denote by $N(v)$ the neighbourhood of $v$ and by $d(v)=|N(v)|$ its degree in $G$. For $S\subseteq G$ we denote the induced subgraph of $G$ on this subset by $G[S]$. Whenever working with graphs satisfying $\alpha_m(G) \ge r$ all our asymptotics are with respect to $n=|G|$ and we treat $m$ and $r$ as constants unless otherwise specified. When working with directed graphs $N^{\pm}(v)$ denotes the in/out neighbourhood of $v$ and $d^{\pm}(v)$ in/out degree.

\section{Lifting the lower bounds from local to global independence number}\label{sec:lower bounds}
In this section, we prove our lower bounds on $\alpha(G)$ for a graph $G$ satisfying $\alpha_m(G) \ge r$. We begin with our lower bound result for $k=\ceil{\frac{m}{r-1}}=3$, namely \Cref{prop:k=3-lower-bound}.
\kisthree*

\begin{proof}
Let us first assume we can find a vertex disjoint collection of $a=t\ell-(r-1)$ cycles $C_{2\ell-1}$ and $t-a$ cycles $C_{2\ell+1}$. Their union makes a subgraph of $G$ of order $a(2\ell-1)+(t-a)(2\ell+1)=(2\ell+1)t-2a=2r-2+t=m$ and has no independent set of size larger than $a(\ell-1)+(t-a)\ell=t\ell-a=r-1$. This gives us a contradiction to $\alpha_m(G) \ge r$, which means such a union does not exist in $G$.

If we can find $a$ cycles $C_{2\ell-1}$ in $G$, then the remainder of the graph can not contain $t-a$ cycles $C_{2\ell+1}$, this means that by removing at most $m$ vertices from $G$ we can find a subgraph which is $C_{2\ell+1}$-free. If there are fewer than $a$ cycles $C_{2\ell-1}$ in $G$, then we find a subgraph, again with at least $n-m$ vertices which is $C_{2\ell-1}$-free. In either case, a classical result from \cite{erdos-cycles} (see also \cite{benny-cycles,li-cycles} for slight improvements) on cycle-complete Ramsey numbers tells us there is an independent set of size at least $\Omega((n-m)^{1-1/\ell})=\Omega(n^{1-1/\ell}),$ as desired.
\end{proof}

We now show how to generalise any improvement made in the $r=3$ case to half of the range for any $k$. It will be convenient to denote by $f(n,m,r)$ the smallest possible size of $\alpha(G)$ in an $n$ vertex graph with $\alpha_m(G) \ge r$. 
\begin{lem}\label{lem:reduction}
Let $k=\ceil{\frac m{r-1}}$ and $\ell=m-(k-1)(r-1)$. Provided $\ell \le \frac{r-1}{2}$ we have $f(n,m,r) \ge \min\{f(n-m,2k-1,3),f(n-m,k-1,2)\}$.
\end{lem}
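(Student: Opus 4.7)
My plan is to exhibit a set of at most $m$ vertices whose removal from $G$ leaves an induced subgraph satisfying at least one of the two properties $\alpha_{2k-1}\ge 3$ or $\alpha_{k-1}\ge 2$. Both properties are monotone under further vertex deletion, so I may then trim down to exactly $n-m$ vertices and invoke the definition of $f$. The key numerical observation is that under $\ell\le(r-1)/2$ the integer $b:=r-1-2\ell$ is non-negative and
\[
m=(k-1)(r-1)+\ell=\ell(2k-1)+b(k-1).
\]
Consequently a disjoint union inside $G$ of $\ell$ induced subgraphs on $2k-1$ vertices each with independence number at most $2$, together with $b$ further vertex-disjoint copies of $K_{k-1}$, would contain exactly $m$ vertices and have independence number at most $2\ell+b=r-1$, contradicting $\alpha_m(G)\ge r$. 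Hence no such ``forbidden family'' lives inside $G$.

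I exploit this via a two-phase greedy extraction. First, as long as possible but stopping once $\ell$ have been extracted, greedily pull out vertex-disjoint induced subgraphs $H_1,H_2,\dots$ on $2k-1$ vertices each with $\alpha(H_t)\le 2$. If this phase saturates with some $j<\ell$ pieces extracted (no more such $(2k-1)$-sets remain in the residue), then by definition the residue $G'$ satisfies $\alpha_{2k-1}(G')\ge 3$, and only $j(2k-1)\le(\ell-1)(2k-1)<\ell(2k-1)\le m$ vertices have been removed -- here the inequality $\ell(2k-1)\le m$ is equivalent to $2\ell\le r-1$ and is exactly our hypothesis. Otherwise $j=\ell$, and I continue with a second phase: greedily pull out disjoint copies of $K_{k-1}$ from $G-\bigcup_t H_t$. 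The moment $b$ such copies are extracted I obtain the forbidden family above, a contradiction, so this phase must saturate with some $i<b$ copies extracted, leaving a residue with no $K_{k-1}$, i.e.\ $\alpha_{k-1}(G')\ge 2$. The total number of vertices removed is then $\ell(2k-1)+i(k-1)\le\ell(2k-1)+(b-1)(k-1)=m-(k-1)<m$.

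In either case $|V(G')|>n-m$ and $G'$ satisfies one of the two desired local independence properties, so trimming to exactly $n-m$ vertices and applying the definition of $f$ yields $\alpha(G)\ge\alpha(G')\ge\min\bigl(f(n-m,2k-1,3),\,f(n-m,k-1,2)\bigr)$ as required. The only genuinely delicate ingredient is identifying the right certificate: one needs to realise that the sum $m=\ell(2k-1)+b(k-1)$ with combined independence number $2\ell+b=r-1$ is forbidden, and that the condition $\ell\le(r-1)/2$ is precisely what guarantees $b\ge 0$ and simultaneously keeps the $H$-pieces from overshooting the budget $m$; once that is in hand the two greedy steps fall out cleanly and the bookkeeping is a short arithmetic check.
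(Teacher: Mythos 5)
Your proof is correct and follows essentially the same route as the paper's: you identify the same forbidden vertex-disjoint union of $\ell$ subgraphs on $2k-1$ vertices with independence number at most $2$ together with $b=r-1-2\ell$ copies of $K_{k-1}$, observe that its existence would contradict $\alpha_m(G)\ge r$, and then split into the same two cases depending on whether $\ell$ disjoint such $(2k-1)$-vertex pieces can be extracted, removing at most $m$ vertices in either case before invoking the definition of $f$. The only cosmetic difference is that you phrase the case split as a two-phase greedy extraction, while the paper states it as a direct dichotomy; the arithmetic and the use of the hypothesis $\ell\le(r-1)/2$ (both to ensure $b\ge 0$ and to keep the vertex budget within $m$) are identical.
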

\begin{proof}
Let $G$ be a graph on $n$ vertices with $\alpha_m(G) \ge r$. Let us first assume that we can find a vertex disjoint union consisting of $\ell$ subgraphs on $2k-1$ vertices, each having no independent set of size $3$, and $r-1-2\ell$ copies of $K_{k-1}$. This union is a subgraph on $\ell (2k-1)+(r-1-2\ell)(k-1)=(r-1)(k-1)+\ell=m$ vertices which has no independent set larger than $2\ell+r-1-2\ell=r-1$, contradicting $\alpha_m(G) \ge r$.

If we can find $\ell$ such subgraphs on $2k-1$ vertices this means that the remainder of the graph can not contain $r-1-2\ell$ copies of $K_{k-1}$. Removing our subgraphs and a maximal collection of $K_{k-1}$'s in the remainder we obtain a subgraph on at least $n-m$ vertices which is $K_{k-1}$-free, or in other words has $\alpha_{k-1} \ge 2$ implying $f(n,m,r) \ge f(n-m,k-1,2)$.
If there are fewer than $\ell$ such subgraphs we may remove a maximal collection and obtain a subgraph on at least $n-m$ vertices which contains no subgraphs on $2k-1$ vertices without an independent set of size $3$. In other words, our new subgraph has $\alpha_{2k-1} \ge 3$ so $f(n,m,r) \ge f(n-m,2k-1,3)$ as claimed.
\end{proof}

Note that $f(n,k-1,2)\ge \alpha$ is equivalent to $R(k-1,\alpha) \le n$ implying that the best known bound is 
\begin{equation}\label{eq:ramsey}
    f(n,k-1,2)\ge n^{\frac1{k-2}-o(1)}.
\end{equation} This represents a natural barrier for our results since it seems very likely that $f(n,2k-1,3)\ge f(n,k-1,2)$. On the other hand, the results of \cite{L-R} and  \cite{alon-sudakov} may be stated as $f(n,m,r) \ge f(n-m,k,2)$. Therefore, obtaining a lower bound for $f(n,2k-1,3)$, better than $f(n,k,2)$, immediately improves their bound whenever the above lemma applies, i.e.\ $\ell \le \frac{r-1}{2}$. Our result in the next section gives a bound which is halfway (in terms of exponents) between the above bounds coming from Ramsey numbers of $K_{k-1}$ and $K_k$ vs large independent set.

\subsection{Independent sets of size three everywhere.}\label{sec:2.1}
In this subsection, we will show how to find big independent sets in graphs satisfying $\alpha_{2k-1}(G) \ge 3$. This is inherently a Ramsey question in the following sense. How big a graph do we need to take in order to guarantee that we can find an independent set of size $\alpha$ or a subgraph $H$ with $2k-1$ vertices and no independent set of size $3$? The approach of \cite{L-R} and  \cite{alon-sudakov} is to always look for a single graph $H$, namely a vertex disjoint union of $K_k$ and $K_{k-1}$. This $H$ clearly has no independent set of size $3$ and the approach further reduces to finding a copy of $K_k$. The reason is that any bound strong enough to guarantee the existence of a $K_k$ will remain strong enough to force a $K_{k-1}$ once we remove the $k$ vertices of a single copy of $K_k$, and thus force a copy of $H$.
Our key new ingredient is to in addition look for a different $2k-1$-vertex graph which we call $H_{2k-1}$ and define to be a blow-up of $C_5$ with parts of sizes $1,k-2,1,1,k-2$ appearing in that order around the cycle, with cliques placed inside of parts (see \Cref{fig:0.1} for an illustration). Since the complement of this graph is an actual blow-up of $C_5$ it is triangle-free, implying that $H_{2k-1}$ has no independent set of size $3$ and is hence forbidden in any graph satisfying $\alpha_{2k-1}(G) \ge 3$.

\begin{figure}
\begin{minipage}[t]{0.265\textwidth}
\centering
\captionsetup{width=\textwidth}
\begin{tikzpicture}[rotate=-90]
\defPt{0}{0}{x0}
\defPt{1.25}{1.2}{x1}
\defPt{1.25}{0.4}{x2}
\defPt{1.25}{-0.4}{x3}
\defPt{1.25}{-1.2}{x4}
\defPt{2.5}{0.8}{x5}
\defPt{2.5}{-0.8}{x6}

\foreach \i in {1,...,4}
{
\draw[] (x0) -- (x\i);
}
\draw[] (x1) -- (x2);
\draw[] (x3) -- (x4);

\draw[] (x1) -- (x5);
\draw[] (x2) -- (x5);

\draw[] (x3) -- (x6);
\draw[] (x4) -- (x6);

\draw[] (x5) -- (x6);

\foreach \i in {0,...,6}
{
\draw[] (x\i) \smvx;
}

\end{tikzpicture}
\caption{$H_{7}$}
\label{fig:0.1}
\end{minipage}\hfill
\begin{minipage}[t]{0.28\textwidth}
\centering
\begin{tikzpicture}[rotate=-90]
\defPt{0}{0.75}{x0}
\defPt{1.5}{1}{x1}
\defPt{1.5}{0.5}{x2}
\defPt{1.5}{-0.5}{x3}
\defPt{1.5}{-1}{x4}
\defPt{2.75}{0.75}{x5}
\defPt{2.75}{-0.75}{x6}

\draw[] (x0) -- ($(x1)+(0,2)$);
\draw[] (x0) -- ($(x1)+(0,1)$);
\draw[] (x0) -- (2,0.75);
\draw[] (x0) -- ($(x4)-(0,0.5)$);
\draw[] (x0) -- ($(x4)-(0,-0.5)$);

\fitellipsis{$(x1)+(0,1.9)$}{$(x4)-(0,0.4)$}{0.7};

\draw[] (x5) -- ($(x1)-(0,0.1)$);
\draw[] (x5) -- ($(x2)-(0,-0.1)$);
\draw[] (x5) -- ($(x1)+(0,0.2)$);
\draw[] (x5) -- ($(x2)+(0,-0.2)$);

\draw[] (x6) -- ($(x3)-(0,0.1)$);
\draw[] (x6) -- ($(x4)-(0,-0.1)$);
\draw[] (x6) -- ($(x3)+(0,0.2)$);
\draw[] (x6) -- ($(x4)+(0,-0.2)$);

\fitellipsis{$(x1)+(0,0.2)$}{$(x2)-(0,0.2)$}{0.4};
\fitellipsis{$(x3)+(0,0.2)$}{$(x4)-(0,0.2)$}{0.4};

\node[] at ($(x1)+(0.8,1.5)$) {$N(v)$};
\node[] at ($(x0)+(0,0.35)$) {$v$};
\node[] at (1.5,0.025) {\tiny $\cdots$};
\node[] at (2.75,0.025) {\footnotesize $\cdots$};
\node[] at ($0.5*(x1)+0.5*(x2)$) {\footnotesize $K_{k-2}$};
\node[] at ($0.5*(x3)+0.5*(x4)$) {\footnotesize $K_{k-2}$};
\foreach \i in {0,5,6}
{
\draw[] (x\i) \smvx;
}
\end{tikzpicture}
\caption{$\mathcal{M}$}
\label{fig:0.2}
\end{minipage}\hfill
\begin{minipage}[t]{0.45\textwidth}
\centering
\begin{tikzpicture}[rotate=-90]
\defPt{0}{2.5}{x0}
\defPt{1.5}{1}{x1}
\defPt{1.5}{0.5}{x2}
\defPt{1.5}{-0.5}{x3}
\defPt{1.5}{-1}{x4}
\defPt{2.75}{0.75}{x5}
\defPt{2.75}{-0.75}{x6}

\draw[line width=1pt] ($(x0)-(0.25,0)$) -- ($(x1)+(0,2.5)$);
\draw[line width=1pt] ($(x0)+(1,0)$) -- ($(x1)+(0,2.5)$);
\draw[line width=1pt] ($(x0)+(1.75,0)$) -- ($(x1)+(0,2.5)$);
\draw[line width=1pt] ($(x0)+(3.1,0)$) -- ($(x1)+(0,2.5)$);

\fitellipsis{$(x0)-(0.15,0)$}{$0.5*(x1)+0.5*(x2)+(1.75,1.8)$}{0.5};

\draw [thick,decorate,decoration={brace,amplitude=5pt,raise=2pt},yshift=0pt]($(x0)+(2.25,0.55)$) -- ($(x0)+(3.3,0.3)$) node [black,midway,xshift=0.75cm,yshift=-0.05cm] {\small $ \ge x^2$};

\draw[line width=1pt] (x0) -- ($(x0)+(1.45,-0.2)$);

\draw[line width=1pt]  ($(x0)+(1.4,-0.2)$) --($(x0)+(2.8,0)$);

\draw[] ($(x0)+(1.45,-0.2)$) \smvx;
\draw[] ($(x0)+(2.8,0)$) \smvx;

\fitellipsisnfill{$(x1)+(0,3.5)$}{$(x4)-(0,0.4)$}{0.75};

\draw[] (x5) -- ($(x1)-(0,0.1)$);
\draw[] (x5) -- ($(x2)-(0,-0.1)$);
\draw[] (x5) -- ($(x1)+(0,0.2)$);
\draw[] (x5) -- ($(x2)+(0,-0.2)$);

\draw[] (x6) -- ($(x3)-(0,0.1)$);
\draw[] (x6) -- ($(x4)-(0,-0.1)$);
\draw[] (x6) -- ($(x3)+(0,0.2)$);
\draw[] (x6) -- ($(x4)+(0,-0.2)$);

\fitellipsis{$(x1)+(0,0.2)$}{$(x2)-(0,0.2)$}{0.4};
\fitellipsis{$(x3)+(0,0.2)$}{$(x4)-(0,0.2)$}{0.4};

\fitellipsis{$(x1)+(0,3.3)$}{$(x2)+(0,2.9)$}{0.35};

\node[] at ($(x4)+(-0.8,0)$) {$N(v)$};
\node[] at ($(x0)+(0.2,0.1)$) {$v$};
\node[] at (1.5,0.025) {\tiny $\cdots$};
\node[] at (2.75,0.025) {\footnotesize $\cdots$};
\node[] at ($0.5*(x1)+0.5*(x2)$) {\footnotesize $K_{k-2}$};
\node[] at ($0.5*(x3)+0.5*(x4)$) {\footnotesize $K_{k-2}$};
\node[] at ($0.5*(x1)+0.5*(x2)+(0,3.2)$) {\footnotesize $K_{k-3}$};
\foreach \i in {0,5,6}
{
\draw[] (x\i) \smvx;
}
\end{tikzpicture}
\caption{Extending $\mathcal{M}$}
\label{fig:0.3}
\end{minipage}
\end{figure}

We start by explaining the general idea behind our argument. As argued above our goal is to find a large independent set in an arbitrary $K_k$-free and $H_{2k-1}$-free $n$-vertex graph $G$. We will do so by finding a vertex $v$ and a large collection of vertex disjoint $K_{k-1}$'s with $k-2$ vertices inside $N(v)$. We know that the remaining vertex of any such $K_{k-1}$ lies outside $N(v)$ as our graph is $K_k$-free. Furthermore, we know that the set of these last vertices spans an independent set as otherwise any edge between such vertices together with their $K_{k-1}$'s and $v$ make a copy of $H_{2k-1}$ (see \Cref{fig:0.2} for an illustration). This gives us our desired large independent set. 

The more difficult part of the argument is to actually find such a collection of $K_{k-1}$'s. The following two easy lemmas will help us control how many $K_{k-1}$'s we can find with $k-2$ vertices inside a neighbourhood of $v$ and how many such $K_{k-1}$'s can intersect another one, respectively. Let us denote by $t_i(G)$ the number of copies of $K_i$ in $G$, (we omit $G$ when it is clear from context).

\begin{lem}\label{lem:turan-lb}
Let $G$ be a graph with $\alpha(G) \le \alpha$ and let $k \ge 2.$ Provided $t_{k-1}(G)>0,$ we have 
    $$\frac{t_k(G)}{t_{k-1}(G)} \ge \frac{|G|}{k\alpha^{k-1}}-1.$$
\end{lem}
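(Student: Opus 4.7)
The plan is to prove the inequality by induction on $k$. The base case $k = 2$ is essentially Tur\'an's theorem applied to the complement: $\overline{G}$ is $K_{\alpha+1}$-free, so $e(\overline{G}) \le (1-1/\alpha)n^2/2$ and hence
$$\frac{t_2(G)}{t_1(G)} = \frac{2e(G)}{n} \ge \frac{n}{\alpha} - 1,$$
which is twice as strong as the claimed bound $n/(2\alpha) - 1$. This extra factor of two is what provides the slack to run the induction.

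For the inductive step ($k \ge 3$) I would enumerate the $(k-2)$-cliques $K$ of $G$ and apply the base case to each common neighborhood $G[N(K)]$, which still has independence number at most $\alpha$. This gives
$$e(G[N(K)]) \ge \frac{|N(K)|^2}{2\alpha} - \frac{|N(K)|}{2}.$$
Summing over all $(k-2)$-cliques and using the standard double-counting identities
$$\sum_K |N(K)| = (k-1)\, t_{k-1}(G), \qquad \sum_K e(G[N(K)]) = \binom{k}{2} t_k(G),$$
together with the Cauchy--Schwarz estimate $\sum_K |N(K)|^2 \ge ((k-1) t_{k-1})^2 / t_{k-2}$, yields
$$\binom{k}{2} t_k \ge \frac{(k-1)^2 t_{k-1}^2}{2\alpha\, t_{k-2}} - \frac{(k-1) t_{k-1}}{2}.$$

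Dividing by $\binom{k}{2} t_{k-1}$ and inserting the inductive hypothesis $t_{k-1}/t_{k-2} \ge n/((k-1)\alpha^{k-2}) - 1$ (valid because $t_{k-1}(G)>0$ implies $t_{k-2}(G)>0$) gives
$$\frac{t_k}{t_{k-1}} \ge \frac{k-1}{k\alpha}\left(\frac{n}{(k-1)\alpha^{k-2}} - 1\right) - \frac{1}{k} = \frac{n}{k\alpha^{k-1}} - \frac{k-1+\alpha}{k\alpha}.$$
Since $\alpha \ge 1$ and $k \ge 2$ we have $k-1+\alpha \le k\alpha$, so the trailing error term is at most $1$ and the claim follows.

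The main obstacle is essentially bookkeeping: it is not a priori obvious that the error terms picked up from Cauchy--Schwarz, from the base-case Tur\'an estimate, and from the inductive hypothesis telescope into the clean $-1$ on the right-hand side. This works out precisely because the base case gives the stronger bound $n/\alpha - 1$ rather than $n/(2\alpha) - 1$, which supplies exactly the slack needed to absorb the loss at each inductive step.
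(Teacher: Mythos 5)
Your proof is correct and follows essentially the same route as the paper's: induction on $k$, Tur\'an's theorem applied inside the common neighbourhood of each $K_{k-2}$, a Cauchy--Schwarz step via $\sum_S d(S)^2 \ge (\sum_S d(S))^2/t_{k-2}$, and the final verification that the error terms fold into the $-1$. Dividing by $\binom{k}{2}t_{k-1}$ rather than $(k-1)t_{k-1}/2$ as the paper does is only a cosmetic rescaling.

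One factual slip worth correcting: $t_2(G) = e(G)$, not $2e(G)$ (the number of copies of $K_2$ is the number of edges), so the base case yields $\frac{t_2}{t_1} = \frac{e(G)}{n} \ge \frac{n}{2\alpha}-\frac{1}{2}$, not $\frac{n}{\alpha}-1$. The base case is therefore \emph{not} ``twice as strong'' as the claimed bound, and your opening remark that ``this extra factor of two is what provides the slack to run the induction'' is a misdiagnosis. Fortunately your inductive step never actually uses the supposedly stronger base case — it only invokes the lemma's own statement as the inductive hypothesis — and it closes anyway. As you verify in the last line, the real source of slack is the inequality $k-1+\alpha \le k\alpha$, which holds for all $\alpha \ge 1$ and $k \ge 2$, so the trailing error $\frac{k-1+\alpha}{k\alpha}$ is at most $1$ regardless of any factor of two.
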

\begin{proof}
We will prove the claim by induction on $k$. For the base case of $k=2$ (note that $t_1=|G|$) the claim follows from Tur\'an's theorem which gives $e(G) \ge \frac{n^2}{2 \alpha}-\frac{n}{2}$ (see \cite{alon-spencer}). Let us now assume $k \ge 3$ and that the claim holds for $k-1$. Given $S\subseteq V(G)$ let us denote by $e(S)$ the number of edges with both endpoints in the common neighbourhood of $S$ and by $d(S)$ the number of common neighbours of $S$. Then we have 
\begin{eqnarray*}
\binom{k}{2}t_k &=& \sum_{G[S]=K_{k-2}} e(S) \ge \sum_{G[S]=K_{k-2}} \left(\frac{(d(S))^2}{2\alpha}-\frac{d(S)}2\right)\\
&\ge& \frac{\left (\sum_{G[S]=K_{k-2}} d(S)\right)^2}{2t_{k-2}\alpha}-\sum_{G[S]=K_{k-2}}\frac{d(S)}2
=\frac{(k-1)^2t_{k-1}^2}{2t_{k-2}\alpha}-\frac{(k-1)t_{k-1}}2
\end{eqnarray*}
Where we used Turan's theorem within the common neighbourhood of $S$ in the first inequality and the Cauchy-Schwarz inequality for the second. Dividing by $(k-1)t_{k-1}/2$ and using the induction assumption
we obtain
$$\frac{kt_{k}}{t_{k-1}} \ge \frac{(k-1)t_{k-1}}{\alpha t_{k-2}}-1\ge \frac{|G|}{\alpha^{k-1}}- \frac{k-1}{\alpha}-1 \ge \frac{|G|}{\alpha^{k-1}}-k.$$

\vspace{-1cm}
\end{proof}

\begin{lem}\label{lem:turan-ub}
Let $G$ be a $K_k$-free graph with $\alpha(G) <  \alpha$. Then for any $i \le k$ we have
    $$t_{i}(G) \le \frac{1}{i!}\cdot \alpha^{\binom{k}{2}-\binom{k-i}{2}}.$$
\end{lem}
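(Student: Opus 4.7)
The plan is to prove the lemma by induction on $k$, exploiting the key observation that if $v_1,\ldots,v_\ell$ form a $K_\ell$ inside a $K_k$-free graph, then their common neighbourhood $\bigcap_j N(v_j)$ is $K_{k-\ell}$-free and still has independence number strictly less than $\alpha$. This lets us iteratively pass to smaller Ramsey-type subproblems.

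\textbf{Step 1 (the case $i=1$).} First I would establish the bound $|G|\le \alpha^{k-1}$ for any $K_k$-free graph $G$ with $\alpha(G)<\alpha$, by induction on $k$. Take a maximum independent set $I\subseteq V(G)$, so $|I|\le \alpha-1$ and every vertex of $G\setminus I$ has a neighbour in $I$. For each $v\in I$, $G[N(v)]$ is $K_{k-1}$-free with $\alpha(G[N(v)])<\alpha$, so by the inductive hypothesis $|N(v)|\le \alpha^{k-2}$. Hence
\[
|G|\le (\alpha-1) + (\alpha-1)\alpha^{k-2} = (\alpha-1)(1+\alpha^{k-2}) \le \alpha^{k-1},
\]
where the last inequality uses $\alpha^{k-2}\ge \alpha-1$ (trivial for $k\ge 3$; $k\le 2$ is the base case).

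\textbf{Step 2 (general $i$).} I would count ordered $i$-tuples $(v_1,\ldots,v_i)$ whose vertices span a $K_i$ in $G$. For $v_1\in V(G)$, Step 1 gives at most $\alpha^{k-1}$ choices. Having picked $v_1,\ldots,v_{j-1}$ spanning a $K_{j-1}$, the next vertex $v_j$ must lie in the common neighbourhood $\bigcap_{\ell<j}N(v_\ell)$, which is $K_{k-j+1}$-free with independence number $<\alpha$; by Step 1 (applied with $k$ replaced by $k-j+1$) this common neighbourhood has at most $\alpha^{k-j}$ vertices. Multiplying these bounds,
\[
i!\cdot t_i(G) \;\le\; \prod_{j=1}^{i}\alpha^{k-j} \;=\; \alpha^{\,ik-i(i+1)/2} \;=\; \alpha^{\binom{k}{2}-\binom{k-i}{2}},
\]
since $\binom{k}{2}-\binom{k-i}{2}=ik-\tfrac{i(i+1)}{2}$. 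Dividing by $i!$ yields the claim.

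The main obstacle is not any single calculation but the bookkeeping of the nested induction: the general-$i$ bound depends on the $i=1$ version of the lemma applied at smaller values of $k$, so one has to prove Step 1 (itself by induction on $k$) before iterating neighbourhoods. A conceptually cleaner alternative is a single induction on $k$ treating all $i$ at once via the double-counting identity $i\cdot t_i(G)=\sum_{v}t_{i-1}(G[N(v)])$ combined with $|G|\le \alpha^{k-1}$ and the identity $(k-1)+\binom{k-1}{2}=\binom{k}{2}$; this reproduces the same exponent in one line.
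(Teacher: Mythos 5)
Your proof is correct and takes essentially the same approach as the paper: both iteratively restrict to common neighbourhoods of a clique (which remain $K_{k-j}$-free with small independence number) and invoke the Ramsey bound $R(k,\alpha)\le\alpha^{k-1}$ at each step — the paper packages this as an induction on $i$ via $i\,t_i=\sum_{G[S]=K_{i-1}}d(S)\le\alpha^{k-i}t_{i-1}$, while you unroll it into a single product over ordered tuples, which is the same count. The only cosmetic difference is that you reprove the $i=1$ Ramsey bound from scratch whereas the paper simply cites it.
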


\begin{proof}
We prove the claim by induction on $i$. For the base case of $i=1$ the claim is equivalent to $t_1=|G| \le \alpha^{k-1}$ which holds by the classical bound on the Ramsey number $R(k,\alpha)$.  Let us now assume the claim holds for $i-1$. Given a subset of vertices $S$ we denote by $N(S)$ the set of common neighbours of $S$ and by $d(S)=|N(S)|$. If $G[S]=K_{i-1},$ then $N(S)$ is $K_{k-i+1}$-free in addition to having no independent set of size $\alpha$. So the same classical bound on Ramsey numbers as above implies $d(S) \le \alpha^{k-i}$. Taking a sum and using the inductive assumption we obtain: 
$$it_i=\sum_{G[S]=K_{i-1}}d(S) \le \alpha^{k-i}t_{i-1} \le  \frac{1}{(i-1)!}\cdot \alpha^{k-i+\binom{k}{2}-\binom{k-i+1}{2}}=\frac{1}{(i-1)!}\cdot \alpha^{\binom{k}{2}-\binom{k-i}{2}}.$$

\vspace{-1.2cm}
\end{proof}

We are now ready to prove our general result for $r=3$.

\begin{proof}[ of \Cref{thm:main-m-3}]
The main part of the proof will be to show that $f(n,2k-1,3)\ge \Omega(n^{1/(k-3/2)})$ but let us first verify this establishes the theorem in full generality. Indeed assuming this holds, given $k=\ceil{\frac{m}{r-1}}$ and $m \le (k-\frac12)(r-1)$ we have $m-(k-1)(r-1) \le \frac{r-1}{2}$ so that we may apply \Cref{lem:reduction} giving us, when combined with \eqref{eq:ramsey}, the desired result:
$$f(n,m,r) \ge \min\{f(n-m,2k-1,3),f(n-m,k-1,2)\}\ge \min\left\{\Omega(n^{\frac1{k-3/2}}),n^{\frac1{k-2}-o(1)}\right\}\ge \Omega\left(n^{\frac1{k-3/2}}\right).$$

Our remaining task now is to show that in an $n$-vertex graph $G$ which contains an independent set of size $3$ among any $2k-1$ vertices we can find an independent set of size $\alpha=\Omega(n^{1/(k-3/2)})$. With this choice of $\alpha$, we may assume that $n \ge C \alpha^{k-3/2}$ for an arbitrarily large constant $C$. Since the result for $k=3$ holds by \Cref{prop:k=3-lower-bound} we may assume $k \ge 4$. 

As discussed above if $G$ contains a $K_k$, then the remainder of the graph has no $K_{k-1}$, so by the classical Ramsey bound we get $\alpha(G) \ge (n-k)^{1/(k-2)} \ge \Omega(n^{1/(k-3/2)})$. So we may assume $G$ is $K_k$-free. 
Our goal is to find a vertex $v$ and a collection $\cal M$ of $\alpha$ vertex disjoint $K_{k-1}$'s each with $k-2$ vertices in $N(v)$. Then as we already explained above, any such $K_{k-1}$ has exactly one vertex outside $N(v),$ as our graph is $K_k$-free. These vertices outside of $N(v)$ form an independent set as otherwise any edge between such vertices together with their $K_{k-1}$'s and $v$ make a copy of $H_{2k-1}$, a contradiction.
In order to do so we will analyse common neighbourhoods of cliques of size $k-3$ inside $N(v)$. Any edge we find inside such a common neighbourhood gives rise to a copy of $K_{k-1}$ and if we find there a path of length $2$ starting with $v,$ then the last edge of this path gives rise to a copy of $K_{k-1}$ with exactly $k-2$ vertices in $N(v)$, which we are looking for. 

Let us first give a lower bound on $T:=(k-2)t_{k-2}$ (recall that $t_{k-2}$ denotes the number of $K_{k-2}$'s in $G$) which counts extensions of a $K_{k-3}$ into a $K_{k-2}$, i.e.\ the sum of sizes of common neighbourhoods of $K_{k-3}$'s in our graph. By a simple application of \Cref{lem:turan-lb} we get: 
\begin{equation}\label{eq:1}
    T=(k-2)t_{k-2} \ge t_{k-3}\cdot \left(\frac{n}{\alpha^{k-3}}-k+2\right)\ge t_{k-3} \cdot \frac{n}{2\alpha^{k-3}},
\end{equation} 
where in the second inequality we used $n\ge 2k\alpha^{k-3},$ which holds for our choice of $\alpha$. Using \Cref{lem:turan-lb} repeatedly in a similar way we get the following lower bound on $T$ which will be useful later in the argument.
\begin{equation}\label{eq:2}
    T=(k-2)t_{k-2} =(k-2)\cdot \frac{t_{k-2}}{t_{k-3}}  \cdots  \frac{t_{2}}{t_{1}} \cdot n \ge  (k-2)\cdot \frac{n}{2(k-2)\alpha^{k-3}} \cdots  \frac{n}{4 \alpha} \cdot n=\frac{n^{k-2}}{2^{k-3}(k-3)!\alpha^{\binom{k-2}{2}}}
\end{equation}
In order to carry out the above proof strategy, for every clique of size $k-3$ we are going to restrict our attention only to a large part of its common neighbourhood where independent sets expand (meaning they have many vertices adjacent to some vertex of the set). This will achieve our goal since $G$ being $K_k$-free means that the common neighbourhood of a $K_{k-3}$ is triangle-free, so the part of this common neighbourhood inside $N(v)$ is an independent set. Hence, expansion of this particular independent set precisely means there are many endpoints of a path of length $2$ starting with $v$, giving many choices for a $K_{k-1}$ with $k-2$ vertices in $N(v)$. 

To obtain such an expansion we are going to first restrict attention to $K_{k-3}$'s which have the common neighbourhood of order at least half the average which equals $T/t_{k-3}$. We will call such a $K_{k-3}$ \textit{typical} and we know that altogether there are at least $T/2$ ways of extending typical $K_{k-3}$'s into a $K_{k-2}$. By \eqref{eq:1}, any typical $K_{k-3}$ has a common neighbourhood of size at least $d:= n/(4\alpha^{k-3})$.

We now proceed to obtain independent set expansion inside the common neighbourhood of each typical $K_{k-3}$. Let $S$ be such a $K_{k-3}$. Let $X$ be a maximal independent set inside of the common neighbourhood $N$ of $S$ which has fewer than $\frac{d-2\alpha}{2\alpha} \cdot |X|$ neighbours inside $N$, in case no such sets exists we set $X=\emptyset$. We now remove $X$ and all its neighbours from $N$. Since $X$ is an independent set we know $|X|\le \alpha$ so we have removed at most $\alpha+\frac{d-2\alpha}{2\alpha}\cdot \alpha = \frac d2$ vertices from $N$. Any remaining vertex in $N$ is called an \textit{expanding neighbour} of $S$. Since $S$ was arbitrary, every typical $K_{k-3}$ has at least $d/2$ expanding neighbours and inside its expanding neighbourhood independent sets expand by a factor of $\frac{d-2\alpha}{2\alpha} \ge \frac{d}{4\alpha}=\frac{n}{16\alpha^{k-2}}=:d'$. In particular, every vertex in $N$ (being an independent set of size one) has degree at least $d'$ in this set. Furthermore, there are still at least $T/4$ ways to extend a typical $K_{k-3}$ into a $K_{k-2}$ using an expanding neighbour. 

We now pick our $v$ to be a vertex which is an expanding neighbour of $g_v \ge T/(4n)$ typical $K_{k-3}$'s (such $v$ exists by double counting and the above bound). Let $\mathcal{S}$ denote the collection consisting of all such $K_{k-3}$'s.
Let us first observe some properties of an $S \in \mathcal{S}$. We denote by $N_S$ its expanding neighbourhood and by $D_S:=N(v) \cap N_S$ the set of its expanding neighbours inside $N(v)$. By definition, $v \in N_S$ for all $S \in {\mathcal S}$. 
Also, as was explained above, we know that $v$ has at least $d'$ neighbours within $N_S$, i.e.  $|D_S| \geq d'$.
These neighbours span an independent set (since they belong to the common neighbourhood of $k-2$ vertices in $v \cup S$) of size at least $d'$, so they expand inside $N_S$. This gives us $d'^2$ different vertices which together with $S$ and one of the vertices in $D_S$ make a $K_{k-1}$ with exactly $k-2$ vertices inside $N(v)$. To find many such disjoint $K_{k-1}$'s we will use the fact that there are in total $\sum_{S \in \mathcal{S}} |D_S| \ge g_v \cdot d'$ ways to extend $K_{k-3}$'s in $\mathcal{S}$ into a $K_{k-2}$ using an expanding neighbour belonging to $N(v)$.

Let us now consider a maximal collection $\mathcal{M}$ of vertex disjoint $K_{k-1}$'s each with exactly $k-2$ vertices inside $N(v)$. Let us assume towards a contradiction that $|\mathcal{M}| < \alpha$. We will show below that if $|\mathcal{M}| < \alpha,$ we can still find some $S \in \mathcal{S}$ and a set $D_S' \subseteq D_S$ of at least $d'/2$ of its expanding neighbours in $N(v)$ such that both $S$ and $D'_S$ are vertex disjoint from all cliques in $\cal M$. For now, suppose we found such $S$ and $D_S'$. 
Since $D_S'\subseteq D_S$ is an independent set (as we explained above), it expands within the neighbourhood of $S$ meaning that there are at least $d'^2/2=\frac{n^2}{2^9\alpha^{2k-4}} \ge \alpha$ vertices which together with $S$ and some vertex in $D_S'$  make a $K_{k-1}$ with $k-2$ vertices in $N(v)$. Moreover, note that all these $d'^2/2$ vertices lie outside of $N(v)$ or we get a $K_k$ in $G$. Since we removed fewer than $\alpha$ vertices outside of $N(v)$ (recall that each $K_{k-1} \in \mathcal{M}$ has exactly one vertex outside $N(v)$) one of these vertices is disjoint from all cliques in $\mathcal{M}$ and gives rise to the desired copy of $K_{k-1}$, which is vertex disjoint from any clique in $\mathcal{M}$ (since both $S$ and $D_S'$ are chosen disjoint from any clique in $\mathcal{M}$) and hence contradicts its maximality.

Therefore, it remains to be shown that there is an $S \in \mathcal{S}$ and $d'/2$ of its expanding neighbours inside $N(v)$, all disjoint from any clique in $\mathcal{M}$. Note that cliques in $\mathcal{M}$ cover at most $\alpha (k-2)$ vertices inside $N(v)$. On the other hand note that any vertex $u\in N(v)$ can belong to at most $\alpha^{\binom{k-2}{2}}/(k-3)!$ copies of $K_{k-2}$ inside $N(v)$. This follows from \Cref{lem:turan-ub} since any such copy of $K_{k-2}$ amounts to a copy of $K_{k-3}$ in the common neighbourhood of $v$ and $u$ which spans a $K_{k-2}$-free graph with no independent set of size $\alpha$ (or we are done). On the other hand, any $K_{k-2}$ can be an extension of at most $k-2$ different copies of $K_{k-3}$ in $\mathcal{S}$ so there are at least 
$g_v d'-\alpha^{\binom{k-2}{2}+1}(k-2)^2/(k-3)!$ extensions of a $K_{k-3}$ from $\mathcal{S}$ to a $K_{k-2}$ inside $N(v)$ both disjoint from any clique in $\mathcal{M}$. Note that 
\vspace{-0.4 cm}
\begin{align*}
    & \frac{g_v d'}{\alpha^{\binom{k-2}{2}+1}(k-2)^2/(k-3)!} \ge \frac{n^{k-3}/(2^{k-1}\alpha^{\binom{k-2}{2}}) \cdot  n/(16 \alpha^{k-2})}{\alpha^{\binom{k-2}{2}+1}(k-2)^2}=\frac{n^{k-2}}{2^{k+3}(k-2)^2\alpha^{(k-2)^2+1}} \ge \frac{1}{2} 
\end{align*}
where in the first inequality we used $g_v \ge T/(4n)$ and \eqref{eq:2} to bound $T$, while in the last inequality we used that $n \ge C\alpha^{k-3/2} \ge C\alpha^{k-2+1/(k-2)}.$ This means that there are at least $g_vd'/2$ such extensions and since $g_v=|\mathcal{S}|$ there must be a $K_{k-3}$ with $d'/2$ extensions, as desired.
\end{proof}

\subsection{The Erdos-Hajnal (7,3) case. }\label{sec:2.2}

For $k=4$ the result from the previous section implies that graphs with $\alpha_7 \ge 3$ have $\alpha \ge \Omega(n^{2/5})$ which already suffices to confirm the conjecture of Erd\H{o}s and Hajnal \cite{erdos-problems}. In this section, we show how to further improve this bound to $\alpha \ge n^{5/12-o(1)}$, i.e.\ we prove \Cref{thm:main-7-3}.

The general idea will be similar as in the previous subsection. Here since $k=4$ we may assume our $G$ satisfying $\alpha_7(G)\ge 3$ is $K_4$ and $H_7$-free. In fact, in this case, $\alpha_7(G) \ge 3$ is essentially (up to removal of a few vertices) equivalent to $G$ being $K_4$ and $H_7$-free. Since we do not need the non-obvious direction here, we prove it as \Cref{lem:equivalence} in the following section where it will be useful. 

Unlike in the previous section, since the desired $\alpha$ is bigger we will not be able to find a large enough set of vertex disjoint triangles (not containing $v$) with an edge in $N(v)$. However, these triangles will still play a major role in the argument. We will call them \textit{$v$-triangles} and the vertex of a $v$-triangle not in $N(v)$ is going to be called a $v$-extending vertex (in other words any non-neighbour of $v$ which belongs to a $K_4$ minus an edge together with $v$). While we can not find a large enough collection of disjoint $v$-triangles the fact our graph is $H_7$-free imposes many restrictions on the subgraph induced by $v$-extending vertices, which we call $N_{\triangle}(v)$. The following lemma establishes the properties of $N_{\triangle}(v)$ that we will use in our argument. 

\begin{lem}\label{lem:properties}
Let $G$ be a $K_4$ and $H_7$-free graph and $v \in G$. Then
\begin{enumerate}[label=\alph*)]
    \item $N_{\triangle}(v) \cap N(v) = \emptyset$.
    \item If $u,w \in N_{\triangle}(v)$ belong to vertex disjoint $v$-triangles, then $u \nsim w.$
    \item $N_{\triangle}(v)$ is triangle-free.
    \item Let $C$ be a connected subgraph of $N_{\triangle}(v)$ consisting only of vertices belonging to at least $5$ different $v$-triangles. If $|C| \ge 2,$ then there exists $u\in N(v)$ such that for any $w\in C,$ all the edges within $N(v) \cap N(w)$ make a star centred at $u$ in $G$.
\end{enumerate}
\end{lem}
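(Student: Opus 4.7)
The plan is to handle the four parts sequentially, each combining the explicit structure of $H_7$ (a blown-up $C_5$ with parts of sizes $1,2,1,1,2$) with the $K_4$-freeness of $G$.

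For (a), I would argue directly: a $v$-extending vertex $u$ belongs by definition to a triangle $\{u,a,b\}$ with $a,b \in N(v)$ and $ab \in E(G)$, so if also $u \in N(v)$, then $\{v,u,a,b\}$ is a $K_4$. For (b), given vertex-disjoint $v$-triangles $T_u = \{u,a,b\}$ and $T_w = \{w,c,d\}$ and assuming $u \sim w$, I would verify that the seven (pairwise distinct) vertices $\{v,a,b,u,w,c,d\}$ carry every edge needed to embed $H_7$ under the map $v \mapsto V_1,\ \{a,b\}\mapsto V_2,\ u\mapsto V_3,\ w\mapsto V_4,\ \{c,d\}\mapsto V_5$: each required edge is witnessed by one of the two triangles, by $a,b,c,d \in N(v)$, or by the assumed $uw$, so $H_7\subseteq G$, a contradiction.

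For (c), I would consider $u,w,z \in N_\triangle(v)$ forming a triangle and look at the three ``$N(v)$-edges'' $e_u,e_w,e_z$ sitting inside the respective $v$-triangles. The contrapositive of (b) forces $v$-triangles of any two adjacent vertices in $N_\triangle(v)$ to share a vertex, so these three edges pairwise meet inside $N(v)$. The standard dichotomy for three pairwise-meeting edges then splits into two cases: either they share a common endpoint $a$, in which case $\{a,u,w,z\}$ is a $K_4$; or they form a triangle on vertices $\{a,b,d\}\subseteq N(v)$, in which case $\{v,a,b,d\}$ is a $K_4$. Both contradict $K_4$-freeness.

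Part (d) is the main technical piece. For each $w \in N_\triangle(v)$ I would set $G_w := G[N(v)\cap N(w)]$ and note that (i) the edges of $G_w$ biject with the $v$-triangles through $w$, so $|E(G_w)|\ge 5$ for all $w\in C$, and (ii) $G_w$ is triangle-free, since a triangle inside $N(v)$ together with $v$ would form a $K_4$. For adjacent $w_1,w_2\in C$, the contrapositive of (b) forces every edge of $G_{w_1}$ to meet every edge of $G_{w_2}$; if $G_{w_1}$ contained two vertex-disjoint edges then all edges of $G_{w_2}$ would lie in the resulting $K_{2,2}$, giving $|E(G_{w_2})|\le 4$, contradicting (i). So all edges of $G_{w_1}$ pairwise meet, and together with triangle-freeness this upgrades $G_{w_1}$ to a star with center some $u_{w_1}\in N(v)\cap N(w_1)$; symmetrically for $G_{w_2}$. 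A short counting argument using the $\ge 5$ leaves of each star then shows $u_{w_1}=u_{w_2}$: if not, then each edge $u_{w_1}z$ of $G_{w_1}$ must meet every edge $u_{w_2}z'$ of $G_{w_2}$, which after ruling out $u_{w_1}=z'$ (can happen for at most one $z'$) forces each of the $\ge 5$ distinct leaves $z$ of $G_{w_1}$ into a fixed two-element set, a contradiction. Propagating this equality along the edges of the connected subgraph $C$ yields a single $u\in N(v)$ serving as the star-center for every $w\in C$, as claimed. The main obstacle is this center-matching step, where the precise threshold of $5$ $v$-triangles is crucial and the intersection patterns between star-edges must be tracked carefully.
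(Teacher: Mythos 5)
Your proof is correct and takes essentially the same approach as the paper: parts a) and b) by direct inspection and explicit embedding of $H_7$, part c) via the Helly-type analysis of three pairwise-meeting edges in $N(v)$, and part d) via the "no two disjoint edges forces a star" argument followed by a center-matching count and propagation along $C$. The only cosmetic difference is that in d) you rule out the triangle option by triangle-freeness of $G[N(v)]$ (from $K_4$-freeness) whereas the paper uses the edge count $\ge 5 > 3$; both are fine.
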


\begin{proof}
Part a) is immediate since $G$ is $K_4$-free and part b) since it is $H_7$-free.

For part c) assume to the contrary that there is a triangle $x,y,z$ in $N_{\triangle}(v)$ and that $f,g,h$ are edges in $N(v)$ completing a $v$-triangle with $x,y,z$ respectively. By part b) any two of $f,g,h$ need to intersect. This is only possible if they make a triangle, in which case together with $v$ they make a $K_4$, or if they make a star, in which case the centre of the star together with $x,y,z$ makes a $K_4$, either way we obtain a contradiction.

For part d) let $u,w \in N_{\triangle}(v)$ be adjacent and belong to at least $5$ different $v$-triangles. We claim that then edges within $(N(w) \cap N(v)) \cup (N(u) \cap N(v))$ make a star in $G$. Indeed if $N(u)\cap N(v)$ would contain two disjoint edges, then by part b) any edge in $N(w) \cap N(v)$, of which there are at least $5$ by assumption, must intersect them both. Since there can be at most $4$ edges which intersect both of the two disjoint edges, we conclude there can be no disjoint edges in $N(u)\cap N(v)$. This means that the edges span either a star or a triangle. Since there are at least $5$ edges, the former must occur. We can repeat for $w$ in place of $u$ and observe that the only way for each pair of edges, one per star, to intersect is that they share the centre, as claimed. Propagating along any path in $C$ we deduce that the same holds for any pair of vertices in $C$.
\end{proof}

The following corollary, based mostly on part d) of the above lemma, allows us to partition $N_\triangle(v)$ into three parts which we will deal with separately in our argument.

\begin{cor}\label{cor:partition}
Let $G$ be a $K_4$ and $H_7$-free graph and $v \in G$. Then there exists a partition of $N_\triangle(v)$ into three sets $L,I$ and $C$ with the following properties:
\begin{enumerate}[label=\alph*)]
    \item $L$ consists only of vertices belonging to at most $4$ different $v$-triangles. 
    \item $I$ is an independent set.
    \item $C$ can be further partitioned into $C_1,\ldots, C_m$ such that there are no edges between different $C_i$'s and for every $C_i$ there is a distinct $v_i \in N(v)$ such that any $v$-triangle containing a vertex from $C_i$ must contain $v_i$ as well.
    \end{enumerate}
\end{cor}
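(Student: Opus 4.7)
The plan is to build the partition directly from the degree-of-triangle-membership data, using Lemma 3.2 to control the structure on high-multiplicity vertices. First, I would define
\[
L := \{u \in N_\triangle(v) : u \text{ lies in at most } 4 \text{ different } v\text{-triangles}\},
\]
which takes care of property a) by construction. Let $H$ denote the induced subgraph of $G$ on $N_\triangle(v) \setminus L$; every vertex of $H$ belongs to at least $5$ $v$-triangles, so Lemma 3.2(d) is applicable to any connected subgraph of $H$ on $\ge 2$ vertices.

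Next I would put into $I$ the set of isolated vertices of $H$ (components of size $1$). By definition $I$ spans no edges inside $H$, so it is independent, giving property b). Let $C := N_\triangle(v) \setminus (L \cup I)$, the union of the non-singleton connected components $D_1, \dots, D_s$ of $H$. Apply Lemma 3.2(d) to each $D_j$ to get a center $u_j \in N(v)$ such that for every $w \in D_j$, the set $N(v) \cap N(w)$ induces a star centered at $u_j$. To enforce the distinctness requirement for the $v_i$'s, group the $D_j$'s by their center: for each value $u$ occurring among $u_1, \dots, u_s$, let $C_i$ be the union of all $D_j$'s with $u_j = u$, and set $v_i := u$. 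This gives a partition $C = C_1 \sqcup \dots \sqcup C_m$ with pairwise distinct $v_i$'s, and there are no edges between different $C_i$'s because they are unions of distinct components of $H$.

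It remains to verify the last clause of c): any $v$-triangle containing some $w \in C_i$ consists of $w$ together with an edge $xy \subseteq N(v)$ with $x, y \in N(w)$. Thus $x, y \in N(v) \cap N(w)$, which by choice of $v_i$ is a star in $G$ centered at $v_i$. Since $xy$ is an edge of a star, one of $x, y$ must be the center $v_i$, as required.

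The only subtlety (which is not really an obstacle, just something to notice) is the distinctness of the $v_i$'s: a priori several components $D_j$ could share the same center, so one must merge them into a single $C_i$. This merging is harmless because components of $H$ have no edges between them, so the merged $C_i$'s still satisfy the no-edges-between-different-$C_i$'s condition. Everything else reduces to carefully packaging Lemma 3.2(d).
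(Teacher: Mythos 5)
Your proposal is correct and follows essentially the same decomposition as the paper's own proof: low-multiplicity vertices into $L$, isolated vertices of the remainder into $I$, non-singleton components into $C$, applying Lemma~3.2(d) per component, and merging components sharing a center to ensure distinct $v_i$'s. The only difference is that you spell out explicitly why the star structure forces any $v$-triangle through $C_i$ to pass through $v_i$, a step the paper leaves implicit.
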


\begin{proof}
We chose $L$ to consist of all $v$-extending vertices belonging to at most $4$ $v$-triangles. We chose $I$ to consist of isolated vertices in $G[N_{\triangle}(v) \setminus L]$ and $C=N_{\triangle}(v) \setminus (L \cup I)$. Note that $C$ is a union of connected components of $G[N_{\triangle}(v) \setminus L]$, which we denote by $C_1',C_2',\ldots$ each of order at least $2$ and consisting entirely of vertices belonging to at least $5$ different $v$-triangles. In particular, \Cref{lem:properties} part d) implies that there exists a vertex $v_i'\in N(v)$ such that any $v$-triangle containing a vertex from $C_i'$ must also contain $v_i'$. Finally, we merge any $C_i'$s which have the same vertex for their $v_i'$ to obtain the desired partition $C_1,\ldots, C_m$ of $C.$ 
\end{proof}

\vspace{-0.4cm}
We are now ready to prove \Cref{thm:main-7-3}.
\vspace{-0.2cm}
\begin{proof}[ of \Cref{thm:main-7-3}]
Let $G$ be an $n$-vertex graph with $\alpha_7(G) \ge 3$. Our task is to show it has an independent set of size $\alpha=n^{5/12-o(1)}.$ If $G$ contains a $K_4$ the remainder of the graph must be triangle-free so has an independent set of size $\Omega(\sqrt{n})>\alpha$. Hence, we may assume $G$ is $K_4$-free as well as $H_7$-free.

We begin by ensuring the minimum degree is high, so that we can ensure good independent set expansion inside neighbourhoods. We repeatedly remove any vertex with degree at most $2n/\alpha.$ Observe that if we remove more than half of the vertices, then the removed vertices induce a subgraph with at least $n/2$ vertices and at most $n\cdot {2n}/{\alpha}$ edges. So, Tur\'an's theorem implies there is an independent set of size at least  $\frac{n/2}{8n/\alpha+1}\ge \Omega(\alpha)$ and we are done. Let us hence assume that $G$ has minimum degree at least $2n/\alpha$ (we technically need to pass to a subgraph on at least $n/2$ vertices but this only impacts the constants).

Let us fix a vertex $v$. Let $X$ be a maximal independent set inside $N(v)$ which has fewer than $n/\alpha^2 \cdot |X|$ neighbours inside $N(v)$, and we set $X=\emptyset$ if such a set does not exist. Since $X$ is independent we may assume it has size at most $\alpha/2$ so $|X|+|N(X) \cap N(v)| \le \alpha/2+n/\alpha^2\cdot \alpha/2 \le n/\alpha$. We direct\footnote{The assignment of directions is simply a convenient way to encode the information about in what part of $N(v)$ we know independent sets expand. An out-neighbour corresponds to an expanding neighbour in the previous argument.} all edges from $v$ towards $N(v)\setminus (X \cup N(X))$. Note that $d^+(v) \ge d(v)/2\ge n/\alpha$ and inside $N^+(v)$ independent sets expand by at least a factor of $x:=n/\alpha^2\ge \Omega(n^{2/12})$. We repeat for every vertex $v$, and note that some edges of $G$ might be assigned both directions, while some other edges none.

Our first goal is to show there are in total many $v$-triangles for which their edge in $N(v)$ belongs to $N^{-}(v)$ and the remaining two edges of the triangle are directed away from this edge. We will call such a $v$-triangle a \textit{directed $v$-triangle}. In other words, we are counting the number of $K_4$'s minus an edge with the $4$ edges incident to the missing edge all being directed towards vertices of the missing edge. We denote this count by $T_4$.  

\begin{claim*}
Unless $\alpha(G) \ge \Omega(n^{5/12})$ we have $T_4 \ge \Omega(n^2).$
\end{claim*}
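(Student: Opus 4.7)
I would express $T_4$ as a sum over edges and then apply Cauchy--Schwarz. For each edge $\{u,w\}\in E(G)$ set $S^+_{uw}:=N^+(u)\cap N^+(w)$. A directed $v$-triangle is determined by an edge $\{u,w\}$ together with an ordered pair $(v,z)$ of distinct vertices in $S^+_{uw}$ (with $v$ the apex and $z$ the extending vertex). Since $G$ is $K_4$-free, the common neighbourhood of any edge is an independent set, so the required non-adjacency $v\not\sim z$ is automatic, yielding
\[T_4=\sum_{uw\in E(G)}|S^+_{uw}|\bigl(|S^+_{uw}|-1\bigr).\]
Cauchy--Schwarz then gives $T_4\ge A^2/e(G)-A$, where $A:=\sum_{uw\in E(G)}|S^+_{uw}|=\sum_{v\in V(G)}e\bigl(G[N^-(v)]\bigr)$ after switching the order of summation (so $A$ counts triangles equipped with a distinguished sink vertex).

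To lower bound $A$, the $K_4$-freeness of $G$ makes each $G[N^-(v)]\subseteq G[N(v)]$ triangle-free with independence number at most $\alpha:=\alpha(G)$, so by Tur\'an $e(G[N^-(v)])\ge |N^-(v)|^2/(2\alpha)-O(|N^-(v)|)$. Counting directed edges, $\sum_v|N^-(v)|=\sum_v|N^+(v)|\ge e(G)$, and convexity then yields $\sum_v|N^-(v)|^2\ge e(G)^2/n$, so $A\ge e(G)^2/(2n\alpha)-O(e(G))$. Combined with $e(G)\ge n^2/\alpha$ from the minimum-degree condition, this gives
\[T_4\ \ge\ \Omega\!\left(\frac{e(G)^3}{n^2\alpha^2}\right)\ \ge\ \Omega\!\left(\frac{n^4}{\alpha^5}\right),\]
which is $\Omega(n^2)$ as long as $\alpha\le O(n^{2/5})$.

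The main obstacle is the intermediate regime $n^{2/5}<\alpha\le n^{5/12}$, in which the straightforward Cauchy--Schwarz above falls short by roughly a factor of $n^{1/12}$. To close the gap I expect to exploit the expansion of $N^+(v)$ more substantially: the edge bound $e(G[N^+(v)])\ge n^2/(2\alpha^3)$ accumulates to $\sum_v e(G[N^+(v)])=\sum_{T}s(T)\ge n^3/(2\alpha^3)$, where $s(T)$ counts the source-vertices of a triangle $T$. The delicate step is then to transfer this lower bound on the ``source statistics'' to the ``sink statistics'' $A=\sum_T\sigma(T)$---arguing that under the expansion-defined directions sinks are at least as common as sources---and to upgrade Cauchy--Schwarz by exploiting $|S^+_{uw}|\le\alpha$, which prevents the $|S^+_{uw}|$ from being too concentrated on a handful of edges. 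If a direct source--sink symmetry turns out to be too weak, the fallback is to invoke $H_7$-freeness via Corollary~2.7: its partition of $N_\triangle(v)$ into a ``low-multiplicity'' part $L$, an independent part $I$, and a ``centred'' part $C$ severely restricts the admissible direction patterns and should rule out configurations in which the sink count is much smaller than the source count, supplying the missing $n^{1/12}$.
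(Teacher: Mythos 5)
Your explicit argument is correct as far as it goes, but it stops precisely where the difficulty begins. The Cauchy--Schwarz bound $T_4\ge\Omega(e(G)^3/(n^2\alpha^2))\ge\Omega(n^4/\alpha^5)$ is fine (and you correctly observe that $\sum_v|N^-(v)|=\sum_v|N^+(v)|\ge e(G)$, since $|N^+(v)|\ge d(v)/2$), but it only gives $T_4\ge\Omega(n^2)$ when $\alpha\le O(n^{2/5})$. That regime is already handled by \Cref{thm:main-m-3} with $k=4$; the whole point of this claim is to reach $n^{5/12}$, i.e.\ exactly the window $n^{2/5}<\alpha<n^{5/12}$ that your computation leaves open. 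So the part you prove is the part that was not needed, and the part that was needed is only sketched.

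The sketch itself has a genuine gap. The proposed ``source--sink symmetry'' --- that $\sum_v e(G[N^-(v)])$ should be comparable to $\sum_v e(G[N^+(v)])$ --- has no a priori justification: the orientation is defined asymmetrically via expansion inside each $N(v)$, edges may carry zero, one or two directions, and nothing forces the two triangle statistics to track each other. Your fallback does name the right tool (\Cref{cor:partition} and $H_7$-freeness), but it is invoked as a black box that ``should rule out'' bad configurations rather than as an argument. The paper's proof is quite different in structure: it does \emph{not} try to lower bound a global quantity like $A$ and then apply Cauchy--Schwarz once at the end. Instead it fixes a vertex $v$ of in-degree at least half the average, builds a set $M$ of $\gtrsim d^-(v)\cdot x$ directed edges inside $N(v)$ sourced in $N^-(v)$, attaches to each such edge $\gtrsim x$ extending vertices in $N_\triangle(v)$, and runs a three-way case analysis over the $C/L/I$ decomposition. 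In the $C$ and $L$ cases the argument finds a large independent set directly (so the ``unless $\alpha(G)\ge\Omega(n^{5/12})$'' escape clause is triggered), and only in the $I$ case does it produce $\Omega(d^-(v)^2/n^{2/12})$ directed $v$-triangles, via Tur\'an inside common in-neighbourhoods of pairs in $I$ followed by Cauchy--Schwarz over pairs. Summing the per-$v$ bound and applying Cauchy--Schwarz over $v$ then yields $T_4\ge\Omega(n^2)$. None of this case analysis, nor the interplay between the expansion parameter $x$ and the $C/L/I$ structure, appears in your proposal, so the missing $n^{1/12}$ remains missing.
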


\begin{proof}
We will show that for any vertex $v$ with in-degree at least half the average we get at least $\Omega(d^-(v)^2/n^{2/12})$ directed $v$-triangles. Let us for now assume this holds. Note that $v$ with lower in-degree contribute at most half to the value of $\sum_{v \in G} d^{-}(v)=\sum_{v \in G} d^{+}(v) \ge n^2/\alpha \ge \Omega(n^{19/12})$ (recall that $d^{+}(v) \ge n/\alpha$). An application of Cauchy-Schwarz implies there are at least $\Omega((\sum_{v \in G} d^{-}(v))^2/n \cdot n^{-2/12})\ge\Omega(n^2)$ directed triangles.

Let us now fix a vertex $v$ with in-degree at least half the average, this in particular means $d^{-}(v) \ge n/(2\alpha)\ge n^{7/12}$. We know that any vertex $u\in N^{-}(v)$ has $v$ as an out-neighbour which means that $v$, as a single vertex independent set, expands inside $N^{+}(u)$. So $v$ has at least $x$ neighbours inside $N^+(u),$ i.e.\ $u$ has at least $x$ out-neighbours inside $N(v)$. Since $u$ was an arbitrary vertex in $N^{-}(v)$ this means there are at least $d^{-}(v)x\ge \Omega(n^{9/12})$ edges inside $N(v)$ which are directed away from a vertex in $N^{-}(v)$ (note that if an edge of $G$ has both directions and is inside of $N^{-}(v),$ then it is counted twice and considered as $2$ directed edges). Let us call the set of such directed edges $M$, so in particular $|M| \ge d^{-}(v)x \ge \Omega(n^{9/12})$. Note further that given such a directed edge $uw \in M$, since $u$ has $w$ as a single vertex independent set in its out-neighbourhood $w$ expands there. This means that our edge lies in at least $x$ $v$-triangles with both edges incident to $u$ directed away from $u$. We call the third vertex of any such triangle $uw$-extending, note that it belongs to $N_{\triangle}(v)$. We are now going to assign types to edges in $M$ according to where, inside $N_\triangle(v)$, we find the majority of their extending vertices.
 
Let us fix a partition of $N_\triangle(v)$ into $C,L$ and $I,$ provided by \Cref{cor:partition}. Given a directed edge $uw \in M$ we say it is of type $L$ or $C$ if it has at least $x/3$ extending neighbours in $L$ or $C$, respectively. We say it is of type $I$ if it was not yet assigned a type. In particular, an edge of type $I$ also has at least $x/3$ extending vertices in $I$ (since we have shown above it has at least $x$ in total), but we also know it has at most $2x/3$ extending vertices in other parts of $N_\triangle(v)$.

\textbf{First case:} at least a third of the edges in $M$ are of type $C$.\\ 
Any vertex in $N(v)$ is adjacent to fewer than $\alpha$ other vertices inside $N(v)$ (since these vertices make an independent set as $G$ is $K_4$-free). This means there is a matching $M'$ of at least $|M|/(6\alpha)$ edges of type $C$, as otherwise, vertices making a maximal matching are incident to fewer than $|M|/3$ edges so it can be extended. Let us denote by $N_e$ the set of extending vertices of an edge $e\in M'$ inside $C$. By assumption, $|N_e|\ge x/3$ and note that each $N_e$ spans an independent set (being in a common neighbourhood of $e$). On the other hand, we also claim $N_e$'s are disjoint. To see this suppose $x \in N_e \cap N_{e'}$ for two distinct edges $e,e' \in M'$. By definition of $N_e$, there is some $i$ such that $x \in C_i$ and by \Cref{cor:partition} part c) we know that $v_i \in e$ and $v_i \in e'$, a contradiction. Note also that there can be no edges between distinct $N_e,N_{e'}$ or we find an $H_7$. This means that $\bigcup_{e \in M'} N_e$ is an independent set of size at least $x/3 \cdot |M|/(6\alpha)=\Omega(n^{6/12}).$

\textbf{Second case:} at least a third of the edges in $M$ are of type $L$. \\ 
Since any vertex in $L$ belongs to at most $4$ distinct $v$-triangles it can in particular be an extending vertex of at most $4$ edges from $M$. Since every edge of type $L$ has at least $x/3$ extending vertices in $L$ this means $|L| \ge x|M|/12 \ge n^{11/12}/12$. Now, \Cref{lem:properties} part c) implies $L\subseteq N_{\triangle}(v)$ is triangle-free so there is an independent set of size at least $\sqrt{|L|} \ge \Omega (n^{5.5/12}).$
    
\textbf{Third case:} at least a third of the edges in $M$ are of type $I$.\\ 
    Let $u \in N^{-}(v)$. Let us denote by $T_u$ the set of out-neighbours of $u$ which together with $u$ make an edge of type $I$. We know by the case assumption that $\sum_{u \in N^{-}(v)} |T_u| \ge |M|/3$. Note that $T_u\subseteq N(v) \cap N^{+}(u)$ so must span an independent set (or we find a $K_4$ in $G$) and hence expands inside $N^{+}(u)$. This means there are at least $x|T_u|$ distinct vertices extending an out-edge of $u$ of type $I$. Note however that we do not know that all of them must be in $I$. But since any edge of type $I$ has at most $2x/3$ extending neighbours outside of $I$ this means that there are least $x|T_u|/3$ vertices in $I$ extending an out-edge of $u$. This in particular means that $u$ sends at least $x|T_u|/3$ edges directed towards $I$. By taking the sum over all $u$ we obtain that the number of edges directed from $N^{-}(v)$ to $I$ is at least $xM/9 \ge d^{-}(v)x^2/9 \ge \Omega(n^{11/12})$.
    
    Since $I$ spans an independent set we may assume $|I| < \alpha$. Let $S_u=N^{-}(v) \cap N^{-}(u)$ for any $u\in I$. Let $s_u=|S_u|$ so we know that $\sum_{u \in I} s_u \ge d^{-}(v)x^2/9\ge \Omega(n^{11/12})$. Let $I'$ be the subset of $I$ consisting of vertices $u$ with $s_u\ge 2\alpha$. Since vertices of $I \setminus I'$ contribute at most $2|I|\alpha<n^{10/12}$ to the above sum, we still have $\sum_{u \in I'} s_u \ge d^{-}(v)x^2/18.$ By Tur\'an's theorem for any $u \in I'$ there needs to be at least $s_u^2/(4\alpha)$ (using that $s_u \ge 2\alpha$) edges inside $S_u$, or we find an independent set of size $\alpha$. Each such edge gives rise to a directed $v$-triangle. Hence, using Cauchy-Schwarz, there are at least $\sum_{u \in I'} {s_u^2}/{(4\alpha)} \ge \Omega(d^-(v)^2x^4/\alpha^2)\ge \Omega(d^-(v)^2/n^{2/12})$ directed $v$-triangles, as desired.
\end{proof}

Let us give some intuition on how we are going to use the fact that $T_4$ is big. Let us denote by $\overrightarrow{d_e}$ the number of common out-neighbours of vertices making an edge $e$. Recall that $T_4$ can be interpreted as the number of $K_4$'s minus an edge with all its edges oriented towards the missing edge. We call the remaining edge (for which we are not insisting on the direction) the \textit{spine}. Note that the number of our $K_4$'s minus an edge having some fixed edge $e$ as the spine is precisely $\binom{\overrightarrow{d_e}}{2}$. This means that $T_4=\sum_{e \in E(G)} \binom{\overrightarrow{d_e}}{2}.$ Our bound on $T_4$ obtained above tells us that in a certain average sense the $\overrightarrow{d_e}$'s should be big. 

Observe now that if one finds a star centred at $v$ consisting of $s$ edges each with $\overrightarrow{d_e} \ge t,$ then this means that the $s$ leaves each have $t$ out-neighbours inside $N(v)$ (note that we are disregarding the information that they are in fact inside $N^{+}(v)$). This will allow us to play a similar game as we did in the previous claim, indeed there we tackled the same problem with $s=d^{-}(v)$ and $t=x$ (which we obtained through expansion) with an important difference, namely that the star was in-directed. The bound on $T_4$ tells us that there is such a star with larger (in certain sense) parameters $s$ and $t$ and the next claim shows how this gives rise to many of our $K_4$'s minus an edge, which we find in a different place (in particular they do not use the centre of the star since we do not know the direction of centre's edges). 

For any $v \in G$ let $S_v$ be the star consisting of a centre $v$ and all its edges in $G$. Let us also denote by $s_v$ the total number of our $K_4$'s minus an edge with an edge of $S_v$ as their spine, i.e.\ $s_v=\sum_{e \in S_v} \binom{\overrightarrow{d_e}}{2}$. Summing over $v$ we also have  $2T_4=\sum_{v}\sum_{e \in S_v} \binom{\overrightarrow{d_e}}{2}=\sum_{v} s_v$.

\begin{claim*}
Unless $\alpha(G) \ge n^{5/12-o(1)}$ and provided $s_v \ge \frac{T_4}{n}$ there exist $\frac{s_v^2}{n^{7/12+o(1)}}$ of our $K_4$'s minus an edge with their spine inside $N(v)$.
\end{claim*}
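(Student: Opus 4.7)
Let $\widetilde T(v)$ denote the number of $T_4$-configurations with spine in $N(v)$; the goal is to show $\widetilde T(v) \ge s_v^2/n^{7/12+o(1)}$. The plan is to mirror the previous claim's argument with parameters $(s, t)$ --- obtained from a dyadic pigeonhole on $S_v$ --- playing the role of $(d^-(v), x)$, and then to extract the $K_4$-minus-edges with spine inside $N(v)$ (not incident to $v$), using pairs of vertices from the independent set $I$ of the $N_\triangle(v)$ partition as the missing-edge pair. The core difficulty compared to the previous claim is that we no longer have $v$ as a canonical second common out-neighbour, so a double Cauchy--Schwarz / Jensen step is needed to pair up the missing-edge vertices.

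First, dyadic pigeonholing the edges $vu \in S_v$ by $\overrightarrow{d_{vu}}$ yields parameters $s, t$ and leaves $u_1, \ldots, u_s$ with $\overrightarrow{d_{vu_i}} \ge t$ and $s t^2 \gtrsim s_v/\log n$. Setting $D_i := N^+(v) \cap N^+(u_i) \subseteq N(v)$, each $D_i$ is an independent set (by $K_4$-freeness of $N(u_i) \cap N(v)$) of size $\ge t$, producing $|M| \ge st$ directed edges $u_i \to w$ inside $N(v)$ --- exactly the structural shape of the previous claim's $|M| \ge d^-(v)\cdot x$ setup. I then apply the $L \cup I \cup C$ partition of $N_\triangle(v)$ from Corollary~4.4 and the same case classification, noting that each $u_i w \in M$ has $\ge x$ extending vertices via expansion of $\{w\}$ in $N^+(u_i)$. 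In the type-$L$ and type-$C$ subcases the first-claim arguments give independent sets of sizes $\sqrt{xst}$ and $xst/\alpha$; whenever either exceeds $n^{5/12-o(1)}$ the ``unless'' clause activates and we are done.

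The main case is type-$I$. Expansion of $T_{u_i} \subseteq D_i$ in $N^+(u_i)$ gives $|N^+(u_i) \cap I| \ge x|T_{u_i}|/3$, hence $\sum_i |N^+(u_i) \cap I| \gtrsim xst$. Since $|I| \le \alpha$, applying Jensen's inequality to the convex function $\binom{\cdot}{2}$ yields
\[
\sum_{\{z, z'\} \subset I} Q_{zz'} \;=\; \sum_i \binom{|N^+(u_i) \cap I|}{2} \;\gtrsim\; \frac{x^2 s_v}{(\log n)^{O(1)}}, \qquad Q_{zz'} := |\{u_i : u_i \to z,\ u_i \to z'\}|.
\]
Pairs with $Q_{zz'} < 2\alpha$ contribute at most $2\alpha \binom{|I|}{2} \le \alpha^3$, which is negligible against $x^2 s_v$ in the regime $s_v \ge T_4/n \ge \Omega(n)$ and $\alpha \le n^{5/12}$. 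On each remaining big pair, Tur\'an inside $G[Q_{zz'}] \subseteq \{u_1, \ldots, u_s\}$ (with independence number $<\alpha$) gives $\ge Q_{zz'}^2/(4\alpha)$ edges; each such edge is a spine in $N(v)$ whose common out-neighbourhood contains the automatically non-adjacent pair $\{z, z'\} \subset I$, and so completes a $T_4$-configuration. A final Cauchy--Schwarz over the $\le \binom{|I|}{2} \le \alpha^2/2$ big pairs gives
\[
\widetilde T(v) \;\gtrsim\; \frac{\left(\sum Q\right)^2}{\alpha^3} \;\gtrsim\; \frac{x^4 s_v^2}{\alpha^3 (\log n)^{O(1)}},
\]
and substituting $x = n/\alpha^2$ together with $\alpha \le n^{5/12-o(1)}$ gives $x^4/\alpha^3 = n^4/\alpha^{11} \ge n^{-7/12-o(1)}$, closing the bound.

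The hard part will be ensuring the case analysis fully handles unbalanced pigeonhole outputs: if $s$ is very small and $t$ very large, the type-$L/C$ independent-set bounds can fall below $n^{5/12-o(1)}$ without the type-$I$ majority being automatic. Here one must exploit that $|N^+(u_i) \cap I| \le |I| \le \alpha$ forces $xt = O(\alpha)$ in the type-$I$ regime, which caps $t$ from above and hence forces $s$ from below (via $st^2 \gtrsim s_v \ge n$); or, alternatively, extract $T_4$-configurations directly from the rich $C_1, \ldots, C_m$ structure in the type-$C$ subcase. Tracking the polylogarithmic and $n^{o(1)}$ losses from the pigeonhole and from the various Cauchy--Schwarz applications is routine but delicate bookkeeping.
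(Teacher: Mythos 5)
Your type-$I$ case is essentially the paper's argument, just phrased with Jensen instead of Cauchy--Schwarz for the cherry count: pigeonhole on $\overrightarrow{d_e}$ for $e\in S_v$ to get $(s,t)$ with $st^2\gtrsim s_v/\log n$, show each surviving leaf $u_i$ directs $\gtrsim tx$ edges into $I$, double-count cherries $\sum_i\binom{|N^+(u_i)\cap I|}{2}\gtrsim s(tx)^2\gtrsim s_v x^2/\mathrm{polylog}$, discard light pairs using $|\mathcal P|\le\alpha^2$, and apply Tur\'an plus Cauchy--Schwarz to extract $\gtrsim s^2(tx)^4/\alpha^3\gtrsim s_v^2 n^{-7/12-o(1)}$ spines inside $N(v)$. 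That all matches and the exponent arithmetic checks out.

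The genuine gap, which you flag but do not close, is the type-$L$ and type-$C$ cases. Your "first-claim bounds" $stx/\alpha$ and $\sqrt{stx}$ only yield $n^{5/12-o(1)}$ when $t\lesssim n^{4/12}$; but $st^2\gtrsim n^{1-o(1)}$ is perfectly compatible with $t$ as large as $n^{5/12}$ (it is bounded above only because $\overrightarrow{d_e}\le\alpha$), and in that unbalanced regime both bounds drop to roughly $n^{4/12}$--$n^{4.5/12}$. Your proposed fix---that $xt=O(\alpha)$ caps $t$---is a constraint that \emph{follows from} being in the type-$I$ majority, so it gives no information when type $L$ or $C$ dominates, which is exactly when you need it. The paper closes this with two nontrivial ingredients you omit: (i) the Hall's-theorem preprocessing that makes the $x$ extending vertices of the $t$ edges out of each leaf $u$ pairwise disjoint (without this the per-leaf counts in the $C$-source and $L$ subcases collapse), and (ii) a finer split of type $C$ into $C$-source/$C$-sink, and for type $L$ with $t\ge s$ an appeal to the dedicated grid lemma (\Cref{grid-lemma}) whose three-term minimum is precisely engineered so that the worst case $t=n^{5/12}$, $s=n^{2/12-o(1)}$ still gives $n^{5/12-o(1)}$. "Extract $T_4$-configurations directly from $C_1,\dots,C_m$" is not developed and has no obvious route to the required $s_v^2 n^{-7/12}$ count. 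As written, the proposal proves the claim only under an additional (unjustified) upper bound on $t$, so these cases constitute a real missing piece rather than routine bookkeeping.
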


\begin{proof}
Let us first ``regularise'' $\overrightarrow{d_e}$'s for $e \in S_v$. Let us partition these edges into at most $\log n$ sets with all edges belonging to a single set having $\overrightarrow{d_e}  \in [t,2t]$ for some $t$. Since $s_v=\sum_{e \in S_v} \binom{\overrightarrow{d_e}}{2}$ and the edges in $S_v$ are split into at most $\log n$ sets, we conclude that one set contributes at least $\frac{s_v}{\log n}$ to this sum. I.e.\ edges in this set make a substar $S$ of $S_v$ consisting of $s$ edges, each having $t \le \overrightarrow{d_e} \le 2t$ for some $s,t$ satisfying $2st^2 \ge s\binom{2t}{2} \ge \frac{s_v}{\log n} \ge \frac{T_4}{n\log n} \ge n^{1-o(1)}$.\footnote{We decided to pay the $\log n$ factor here for simplicity, it is possible to do the same argument more carefully and avoid it.}
We may assume that $t \le n^{5/12}$ since $\overrightarrow{d_e}$ counts certain common neighbours of a fixed edge which must span an independent set (or there is a $K_4$).

Similarly, as in the previous claim, we define $M$ to be the set of directed edges inside $N(v)$ with source vertex being a leaf of $S$. Now the fact that any edge $e \in S$ has $\overrightarrow{d_e} \ge t$ means that any leaf $u$ of $S$ is a source of at least $t$ edges in $M$. Let us remove all but exactly $t$ such edges from $M$, so, in particular, $|M|=st$ (as before while some edges might be oriented both ways we treat this as two distinct directed edges). Let us denote by $T_u$ the set of $t$ out-neighbours of $u$ which together with $u$ make an edge in $M$. We know $T_u$ is an independent set (it consists of common neighbours of the edge $vu$). In particular, both $T_u$ and any of its subsets expand inside $N^{+}(u)$. Let us consider an auxiliary bipartite graph with the left part being $T_u$ and the right part being $N_\triangle(v)\cap N^{+}(u)$. We put an edge between two vertices if together with $u$ they make a triangle in $G$. The expansion property translates to the fact that any subset of size $t'$ of the left part has at least $t'x$ distinct neighbours on the right. A standard application of Hall's theorem (to the graph obtained by taking $x$ copies of every vertex on the left) tells us we can find $t$ disjoint stars each of size $x$ in this graph. Translating back to our graph, for each of the $t$ edges incident to $u$ in $M$ we have found a set of $x$ out-neighbours of $u$ which extend it into a triangle. Moreover, these sets are disjoint for distinct edges. We call these $x$ vertices extending for the corresponding edge\footnote{ Note that this is a subset of what we considered to be extending vertices in the previous claim. Here it is important for us to fix the number of extending neighbours for every edge for certain regularity considerations.}. 

Once again let us take a $C,L,I$ partition provided by \Cref{cor:partition} and assign types $C,L$ and $I$ to edges in $M$, which have at least $x/3$ extending neighbours in $C,L$ and $I$, respectively. This is similar to the previous argument except that we are using our new, slightly modified definition of extending vertices. We again split into three cases according to which type is in the majority. 

\textbf{First case:} at least $st/3$ edges in $M$ are of type $C.$\\ 
As in 1.\ case of the previous claim we can find a matching $\mathcal{M}$ of at least $st/(6\alpha)$ edges of type $C$ (since vertices of $\mathcal{M}$ are incident to at most $2|\mathcal{M}|\alpha$ edges inside $N(v)$). 
Let us denote by $N_e$ the set of extending neighbours of an edge $e \in \mathcal{M}$ inside $C$, so $|N_e| \ge x/3$. Again, as before, each $N_e$ is independent (neighbours of the same edge), and they are disjoint for different $e$ (otherwise if a vertex belonging to two $N_e$'s belongs to $C_i,$ we get a contradiction to the uniqueness of $v_i$) and there are no edges between distinct $N_e$'s (or we find an $H_7$). So their union makes an independent set of size at least $stx/(18\alpha)$. If $t \le n^{4/12},$ then $st^2 \ge n^{1-o(1)}$ implies $st \ge n^{8/12-o(1)}$ and our independent set is of size at least $n^{5/12-o(1)}$, as desired. So let us assume $t \ge n^{4/12}.$ 

Note that by \Cref{cor:partition}, given an edge $e\in M$ and its extending neighbour which belongs to some $C_i$ we know that $v_i \in e$ and $v_i$ can be either source or sink of $e$. In the former case, we say the neighbour is source-extending and in the latter sink-extending. We say $e$ is of type $C$-source if it has at least $x/6$ source-extending neighbours in $C$ and of type $C$-sink if it has at least $x/6$ sink-extending neighbours in $C$.

If there are at least $st/6$ edges in $M$ of type $C$-sink that means there is a leaf $u$ of $S$ which is a startpoint of at least $t/6$ such edges. If $uw$ is one of these $t/6$ edges it has a set of at least $x/6$ sink-extending neighbours which span an independent set (being common neighbours of an edge) and all belong to the same $C_i$ (namely the one for which $v_i=w$) and these $C_i$'s are distinct between edges. This means that these sets are disjoint between ones corresponding to distinct edges and span an independent set (there are no edges between distinct $C_i$'s) so we found an independent set of size at least $tx/36 \ge \Omega(n^{6/12})$.

If there are at least $st/6$ edges in $M$ of type $C$-source we need to be able to find $s/12$ leaves of $S$ incident to at least $t/12$ such edges each (since we know that every leaf of $S$ is incident to exactly $t$ edges in $M$ so otherwise, there would be fewer than $s/12 \cdot t + s \cdot t/12=st/6$ such edges in total). For any such leaf $u$ this means we find $\frac{t}{12} \cdot \frac{x}{6} \ge \frac{tx}{72}$ source-extending neighbours of its edges (using our preprocessing fact that extending neighbours of distinct edges incident to $u$ are disjoint). By \Cref{lem:properties} c) we know there is an independent set of size $\sqrt{tx}/9$ among these neighbours. Since these are source-extending neighbours we know they all belong to $C_i$ for which $v_i=u$. In particular, for distinct $u$ they belong to distinct $C_i$'s, meaning we obtain an independent set of size $\Omega(s\sqrt{tx})=\Omega(st^2\sqrt{x}/t^{3/2})\ge n^{13/12-o(1)}/t^{3/2} \ge n^{5.5/12-o(1)},$ using $t \le n^{5/12}$.

\textbf{Second case:} at least $st/3$ edges in $M$ are of type $L$.\\ 
Let us first assume $s \ge t$. We again find a matching of size $st/(6\alpha)$ of edges of type $L$ in $M$. Each edge $e$ in the matching gives rise to a set $N_e$ of $x/3$ extending neighbours in $L$. $N_e$ spans an independent set (being inside the common neighbourhood of an edge) and there can be no edges between distinct $N_e$'s (or we find an $H_7$). This means that the union of $N_e$'s spans an independent set. Since any extending neighbour in this union can be extending for at most $4$ edges (so belongs to at most $4$ different $N_e$'s) this gives $\alpha(G) \ge stx/(72\alpha) \ge (st^2)^{2/3}x/(72\alpha) \ge n^{5/12-o(1)},$ using $s \ge t$ and $st^2 \ge n^{1-o(1)}$.

Let us now assume $t \ge s$. We can find at least $s/6$ leaves of $S$ each being a start vertex of at least $t/6$ edges in $M$ of type $L$ (otherwise there would be fewer than $s/6 \cdot t+ s \cdot t/6=st/3$ edges in total). Given a directed edge $uw \in M$ of type $L,$ with $u$ being one of these $s/6$ leaves, we define $A_{uw}$ as the set of extending vertices of $uw$ belonging to $L$. We will now state some properties of these sets $A_{uw},$ which will allow us to find a big independent set. Since this is the most technical part of the proof and, once the appropriate properties are identified, is independent of the rest of the argument we prove it as a separate lemma afterwards. 

\textbf{1.)} No vertex belongs to more than $4$ different $A_{uw}$'s. Since $A_{uw} \subseteq L$ and by \Cref{cor:partition} part a), any vertex in $L$ belongs to at most $4$ different $v$-triangles, this means it can belong to at most $4$ different $A_{uw}$'s. \textbf{2.)} $|A_{uw}| \le x$. This follows since, by our definition, there are exactly $x$ $uw$-extending vertices.\\
\textbf{3.)} $\sum_{w}|A_{uw}| \ge tx/18$. This follows since for any $u$ there are at least $t/6$ edges $uw$ for which $A_{uw}$ is defined and each such edge being of type $L$ means there are at least $x/3$ extending vertices, meaning $|A_{uw}| \ge x/3$.\\ \textbf{4.)} If $uw$ and $u'w'$ are independent, then there can be no edges between $A_{uw}$ and $A_{u'w'}$. Else, we find $H_7$. 

This precisely establishes the conditions of \Cref{grid-lemma}, which provides us with an independent set of size $\min(\Omega(s\sqrt{tx}),\Omega(s^{1/2}t^{3/4}x^{1/4}), \Omega(s^{3/5}t^{3/5}x^{2/5}))$. Each of the three expressions is minimised when $t$ is as large as possible (under the assumption $st^2 \ge n^{1-o(1)}$) so we may plug in $t=n^{5/12}$ and $s=n^{2/12-o(1)}$ in which case the first expression evaluates to $n^{5.5/12-o(1)},$ the second to $n^{5.25/12-o(1)}$ and the third to $n^{5/12-o(1)}$.
    
\textbf{Third case:} there are at least $st/3$ edges in $M$ of type $I$.\\ 
Since $I$ spans an independent set we know $|I| < \alpha$. Any directed edge $uw$ in $M$ of type $I$ has at least $x/3$ extending neighbours in $I$. Since these are distinct for different $w$'s by our definition of an extending neighbour and since we insist that extending neighbours are out-neighbours of $u$ this means that overall there are at least $stx/9$ edges directed from leaves of $S$ to $I$.

This means that the average out-degree from $S$ to $I$ is at least $tx/9$. This together with a standard application of Cauchy-Schwarz implies there are $\Omega(s(tx)^2)=n^{16/12-o(1)}$ (recall that $st^2 \ge s_v/(2\log n) \ge n^{1-o(1)}$) out-directed cherries ($K_{1,2}$'s) with the centre in $S$. If we denote by $\mathcal{P}$ the set of pairs of vertices in $I,$ then $|\mathcal{P}|=\binom{|I|}{2} \le \alpha^2$, let us also denote by $d_p$ the number of common in-neighbours of a pair of vertices $p \in \mathcal{P}$. So in particular, $\sum_{p \in \mathcal{P}} d_p =\Omega(s(tx)^2)=n^{16/12-o(1)}$. Pairs $p$ with $d_p<2\alpha$ contribute at most $|\mathcal{P}|\cdot 2\alpha \le n^{15/12-o(1)}$ to this sum so if $\mathcal{P}'\subseteq \mathcal{P}$ denotes the set of pairs which have $d_p \ge 2\alpha,$ then also $\sum_{p \in \mathcal{P}'} d_p =\Omega(s(tx)^2)=n^{16/12-o(1)}$. Applying Tur\'an's theorem inside a common neighbourhood of $p \in \mathcal{P'}$ we find there $d_p^2/(4\alpha)$ edges (using that $d_p \ge 2\alpha$) or there is an independent set of size $\alpha$. Note that any edge we find inside this common in-neighbourhood gives rise to our desired $K_4$ minus an edge. In particular, using Cauchy-Schwarz we find at least $\sum_{p \in \mathcal{P}'} \frac{d_p^2}{4\alpha} \ge \frac{(\sum_{p \in \mathcal{P}'} d_p)^2}{4\alpha|\mathcal{P}'|} \ge \Omega(s^2(tx)^4/\alpha^3)\ge s_v^2/n^{7/12+o(1)}$ copies of our desired $K_4$ minus an edge, as claimed. 
\end{proof}

Recall that $2T_4 = \sum_{v} s_v$. Note also that stars with $s_v \ge T_4/n$ contribute at least $T_4$ to this sum. Hence, taking the sum over $v$ of the number of copies of our $K_4$'s minus an edge with the spine in $N(v)$ we obtain at least $\sum s_v^2/n^{7/12+o(1)} \ge T_4^2/n^{19/12+o(1)} \ge T_4 n^{5/12-o(1)},$ where we used Cauchy-Schwarz in the first inequality and our bound $T_4 \ge \Omega(n^{2})$, from the first claim in the second. Note however that certain copies of our $K_4$'s minus an edge got counted multiple times. But, for every $v$ that counted our $K_4$ minus an edge, we know it had its spine inside $N(v)$. This means that a single copy could be counted at most $\alpha(G)$ times since the spine (being an edge in $G$) can have at most $\alpha(G)$ neighbours, as they span an independent set. In particular, unless $\alpha(G) \ge n^{5/12-o(1)}$, this shows that there are more than $T_4$ distinct copies of our $K_4$'s minus an edge, contradicting the definition of $T_4$ and completing the proof.
\end{proof}

We now prove the lemma we used in the proof above. Let us first attempt to help the reader parse the statement. It says that if we can partition vertices of $G$ into a grid of subsets each of size at most $x$ (so each cell of the grid contains at most $x$ vertices), such that $G$ only has edges between vertices in the same row or column of the grid, and we additionally know that there is a large number of vertices in each row, then we can find a big independent set in the whole graph. 
\begin{lem}\label{grid-lemma}
Let $G$ be a triangle-free graph with vertex set $\bigcup_{i,j} A_{ij}$ where $i \in [s], j \in \mathbb{N}$. If
\vspace{-0.2cm}
\begin{enumerate}
    \item no vertex appears in more than $4$ different $A_{ij}$'s;
    \item $|A_{ij}| \le x,$ for any $i,j$;
    \item for some {$t\ge s$} and any $i \in [s]$ there are at least $tx$ vertices in $\cup_{j} A_{ij}$; 
    \item there are no edges of $G$ between $A_{ij}$ and $A_{k\ell}$ for any $i \neq k$ and $j \neq \ell,$
\end{enumerate}
\vspace{-0.2cm}
then $\alpha(G) \ge \min(\Omega(s\sqrt{tx}),\Omega(s^{1/2}t^{3/4}x^{1/4}), \Omega(s^{3/5}t^{3/5}x^{2/5}))$.
\end{lem}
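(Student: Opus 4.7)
The plan is to combine independent sets extracted row by row into a single large independent set in $G$, with a case analysis on the column-edge structure producing each of the three claimed bounds. As a first step, observe that each row $R_i := \bigcup_j A_{ij}$ inherits triangle-freeness from $G$ and has at least $tx$ vertices; the elementary bound $\alpha \ge \sqrt{n}$ for triangle-free graphs (either some $v$ has large degree and $N(v)$ works, or a greedy low-degree removal) gives an independent set $I_i \subseteq R_i$ of size $\Omega(\sqrt{tx})$. Taking the union $U := \bigsqcup_i I_i$ and accounting for the multiplicity-$4$ bound, we get $|U| \ge \Omega(s\sqrt{tx})$. Moreover, every edge in $G[U]$ is a column-edge (each $I_i$ is independent in its row), so $v \in I_i$ has at most $(s-1)x$ neighbors in $U$, all contained in the single column $j(v)$ with at most $x$ per other row; in particular $G[U]$ is triangle-free of maximum degree $\le (s-1)x$.

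The three bounds then arise as outputs of three complementary strategies for extracting an independent set from $G[U]$. In the column-sparse regime (when $t$ is large compared to $s$), a greedy/pigeonhole argument using the width bound $t \ge s$ to ensure enough unused columns lets us refine the $I_i$'s so that the column sets $\{j(v): v \in I_i\}$ are pairwise disjoint across $i$; the refined union is then already independent in $G$, giving $\Omega(s\sqrt{tx})$. In the balanced regime, applying the triangle-free bound together with Shearer-type inequalities to $G[U]$ (using the column-degree bound $(s-1)x$) --- or, equivalently, a peeling argument that alternates removal of high-degree vertices with the $\alpha \ge \sqrt{n}$ bound on the triangle-free leftover --- should yield $\Omega(s^{1/2}t^{3/4}x^{1/4})$. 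In the column-dense regime (when the $I_i$'s concentrate in a small number of columns), a typical column $C_j$ has $|C_j| = \Omega(sx)$ vertices and is triangle-free, so $\alpha(C_j) \ge \Omega(\sqrt{sx})$; a H\"older/Cauchy-Schwarz averaging over columns combined with the row-width hypothesis $|R_i| \ge tx$ should give $\Omega(s^{3/5}t^{3/5}x^{2/5})$.

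The main obstacle will be calibrating the transition thresholds between the three regimes so that they together cover every configuration of parameters, and doing the bookkeeping so that each strategy delivers a bound matching one of the three terms on the nose rather than up to logarithmic loss. Throughout, the hypothesis $t \ge s$ is essential: it forces each row to span at least $t$ cells, providing the unused columns needed for the disjoint refinement in the first regime and the denominator for the averaging in the third. The peeling step in the balanced regime is likely the most delicate part, requiring a careful tradeoff between the number of vertices removed, the remaining graph's triangle-freeness, and the column-degree bound, to interpolate cleanly between the other two bounds.
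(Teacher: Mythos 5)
Your outline diverges substantially from the paper's proof, and the places where you leave the details unresolved are precisely where the argument is hardest; as written the proposal has real gaps.

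The paper's proof is self-referential in a way yours is not: it sets $\alpha := \alpha(G)$ from the outset and calibrates \emph{every} extraction step to $\alpha$. The crucial preprocessing step repeatedly deletes any row in which some $\le t/s$ cells already contain an independent set of size $> 2\alpha/s$ (also wiping out the columns those cells sit in); after $\le s/2$ rounds this must halt or $\alpha$ is exceeded, and the surviving graph satisfies the strong property that \emph{no} $t/s$ cells in a row contain a $2\alpha/s$-independent set. This property is what later controls ``full'' cells and lets the row-case give $\alpha^{5/2}\ge\Omega(s^{3/2}t^{3/2}x)$, hence the third term. Your outline has no analogue of this $\alpha$-dependent cleanup, and without it the third exponent seems hard to reach. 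Your ``column-dense'' heuristic (``a typical column has $\Omega(sx)$ vertices'') is not justified: $U=\bigcup_i I_i$ has size roughly $s\sqrt{tx}$ and could spread over up to $t$ columns, so a typical column of $U$ has only $s\sqrt{x/t}$ vertices, far below $sx$.

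The other two regimes are also not on solid ground. Your ``balanced'' regime relies on $G[U]$ being triangle-free with maximum degree $\le (s-1)x$; but feeding $|U|\approx s\sqrt{tx}$ into the triangle-free bound $\alpha\ge\sqrt{n}$ gives only $s^{1/2}(tx)^{1/4}$, short of the claimed $s^{1/2}t^{3/4}x^{1/4}$ by a factor of $t^{1/2}$, and the Turán bound $n/\Delta$ gives $\sqrt{t/x}$, much worse. The ``peeling argument that alternates removal of high-degree vertices with the $\sqrt{n}$ bound'' is named but never specified, and no naive version I can see closes this $t^{1/2}$ gap. (Contrast the paper: in its column case it finds $t/4$ columns each containing a $\ge n/(8\alpha)$-size independent set, where $n=stx$, and then sums $\sqrt{a_i}$ over rows subject to $\sum a_i\ge tn/(32\alpha)$ and $a_i\le tx$; the $\alpha$ in the denominator of the lower bound on $\sum a_i$ is exactly what powers up the exponent on $t$.) Your ``column-sparse'' refinement to disjoint column sets is also incomplete: a row's independent set $I_i$ can occupy roughly $\sqrt{tx}/x=\sqrt{t/x}$ cells, so enforcing disjointness across all $s$ rows forbids $\Theta(s\sqrt{t/x})$ columns, deleting up to $\Theta(s\sqrt{tx})$ vertices from later rows --- which may be the entire row unless $s\lesssim\sqrt{tx}$. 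You flag the threshold bookkeeping as the main obstacle, but the obstacle is not just bookkeeping: the extraction step needs to be scaled by $\alpha$ (as the paper does) rather than by the fixed quantity $\sqrt{tx}$, and your outline lacks that mechanism.

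Finally, a structural point: the three terms in the paper's minimum are not three regimes to be unioned but three inequalities $\alpha^2\gtrsim s^2tx$, $\alpha^2\gtrsim st^{3/2}x^{1/2}$, $\alpha^{5/2}\gtrsim s^{3/2}t^{3/2}x$, each established in a branch of a case analysis, and the lemma follows by taking the minimum of the corresponding lower bounds on $\alpha$. Reframing your argument this way --- fix $\alpha$, extract sets of size $\sim n/\alpha$ rather than $\sim\sqrt{tx}$, and convert the contradiction into an inequality for $\alpha$ --- would bring you much closer to a complete proof.
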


\begin{proof}
Let us first replace any vertex which appears in multiple $A_{ij}$'s with distinct copies of itself, one per $A_{ij}$ it appears in. Our new graph has all $A_{ij}$ disjoint and satisfies the same conditions as the original. In addition, the independence number went up by at most a factor of $4$ so showing the result for our new graph implies it for the original. So let us assume that sets $A_{ij}$ actually partition the vertex set of $G$.

Let $\alpha=\alpha(G)$. We will call $\cup_{j} A_{ij}$ a row of our grid, $\cup_{i} A_{ij}$ a column and each $A_{ij}$ a cell. Let us first clean up the graph a bit. As long as we can find an independent set $I$ of size more than $2\alpha/s$ using vertices from at most $t/s$ cells inside some row, we take $I$, delete the rest of the row and all the columns containing a vertex of $I$ from $G$. If we repeat this at least $s/2$ many times we obtain an independent set of size larger than $\alpha$ which is impossible. This means that upon deleting at most $s/2$ many rows and at most $(t/s) \cdot s/2=t/2$ many columns we obtain a subgraph for which in any row any $t/s$ cells don't contain an independent set of size at least $2\alpha/s$. This subgraph still satisfies all the conditions of the lemma with $t:=t/2$ and $s:=s/2$. The only non-immediate condition is 3, it holds since we deleted at most $t/2$ cells in any of the remaining rows, so in total at most $tx/2$ vertices in that row altogether, using that any cell contains at most $x$ vertices. From now on we assume our graph $G$ satisfies the property that in any row any $t/s$ cells don't contain an independent set of size $2\alpha/s$ 

Let us delete vertices from our graph until we have exactly $tx$ in every row.
Let $n$ denote the number of vertices of $G$, so $n =stx$. Observe that at least half of the vertices of $G$ have degree at least $n/(4\alpha)$ as otherwise, vertices with degree lower than this induce a subgraph which has an independent set of size at least $\alpha$ by Tur\'an's theorem. Condition $4$ ensures each such vertex either has at least $n/(8\alpha)$ neighbours in its row or $n/(8\alpha)$ neighbours in its column. In particular, at least a quarter of vertices of $G$ fall under one of these cases. Since $G$ is triangle-free, the neighbourhood of any vertex is an independent set. We conclude that either there are at least $s/4$ rows containing an independent set of size $n/(8\alpha)$ or there are $t/4$ columns containing an independent set of size $n/(8\alpha)$.

Let us first consider the latter case. Let $U$ be the union of our $t/4$ independent sets of size at least $n/(8\alpha)$, belonging to distinct columns, so consisting of at least $tn/(32\alpha)$ vertices. Let $a_i$ denote the number of vertices of $U$ in row $i$. Then $\sum_{i=1}^{s} a_i \ge tn/(32\alpha)$ and each $a_i \le tx$ (since we removed all but $tx$ vertices in any row). On the other hand, since $G$ is triangle-free, we know that in each row we can find an independent subset of $U$ of size $\sqrt{a_i}$. In particular, since $U$ was constructed as a union of independent sets in columns (and all edges of $G$ are either within columns or within rows) this means that $U$ contains an independent set of size $\sum_{i=1}^{s} \sqrt{a_i} \ge \frac{tn}{32\alpha \cdot tx} \cdot \sqrt{tx}=\Omega(st^{3/2}x^{1/2}/\alpha)$ (where we used $n=stx$ and the standard fact that sum of roots is minimised, subject to constant sum, when as many terms as possible are as large as possible). In other words, we showed $\alpha \ge \Omega(st^{3/2}x^{1/2}/\alpha)$ giving us the second term of the minimum.

Moving to the former case let us again take a union $U$ of our $s/4$ independent sets of size at least $n/(8\alpha)$, belonging to distinct rows, so again $|U| \ge sn/(32\alpha)$. Call a cell $A_{ij}$ full if it contains at least $4\alpha/t$ vertices of $U$. There are fewer than $t/(2s)$ full cells in any row since otherwise, $U$ restricted to $\ceil{t/(2s)}$ full cells gives us an independent set of size at least $2\alpha/s$ using at most $\ceil{t/(2s)} \le t/s$ cells (using $t \ge s$), which contradicts our property from the beginning. Using this and once again the property from the beginning we conclude there can be at most $2\alpha/s$ vertices of $U$ in full cells of any fixed row. If $2\alpha/s\ge n/(16\alpha),$ then $\alpha^2 \ge \Omega (ns) \ge \Omega(s^2tx),$ so first term of the minimum is satisfied. So we may assume $2\alpha/s\le n/(16\alpha)$. Hence, by removing from $U$ any vertex belonging to a full cell we remove at most half the vertices of $U$ (since $U$ had at least $n/(8\alpha)$ vertices in every row). Now, finally, denote by $a_i$ the number of vertices of $U$ belonging to the column $i$. So $\sum_{i=1}^{s} a_i = |U| \ge sn/(64\alpha)$ and $a_i \le s \cdot 4\alpha/t$, since all the remaining vertices of $U$ belong to non-full cell. As before $\alpha \ge \sum_{i=1}^{s} \sqrt{a_i} \ge \frac{sn/(64\alpha)}{{4s\alpha/t}} \cdot \sqrt{4s\alpha/t}\ge \Omega(s^{3/2}t^{3/2}x/\alpha^{3/2})$ giving us the third term of the minimum.
\end{proof}

\section{2-density and local independence number}\label{sec:2-density}

In this section, we show our upper bounds on the minimum possible $\alpha(G)$ in a graph $G$ satisfying $\alpha_m(G)\ge r$. To do this we need to exhibit a graph with no large independent set in which any $m$-vertex subgraph contains an independent set of size $r$. As discussed in the introduction the natural candidates are random graphs and the answer is controlled by $M(m,r)$ which is defined to be the minimum value of the $2$-density over all graphs $H$ on $m$ vertices having $\alpha(H) \le r-1.$ It will be convenient to define $d_2(H)=\frac{e(H)-1}{|H|-2},$ so that the $2$-density is defined as the maximum of $d_2(H')$ over subgraphs $H'$ of order at least $3$, from now on whenever we consider $2$-density we will implicitly assume the subgraphs we take to have at least $3$ vertices. We begin by proving our reduction to the $2$-density Tur\'an problem, namely \Cref{prop:m2-loc-ind}.

\equiv*
\begin{proof}
A graph $H$ is said to be \textit{strictly $2$-balanced} if $m_2(H)>m_2(H')$ for any proper subgraph $H'$ of $H$. I.e., if $H$ itself is the maximiser of $d_2(H)$ among its subgraphs and in particular $m_2(H)=d_2(H)=\frac{e(H)-1}{|H|-2}$.

Let $\HH =\{H_1,\ldots, H_t\}$ be a collection of strictly $2$-balanced graphs such that any $m$-vertex $H$ with $\alpha(H) \le r-1$ contains some $H_i$ as a subgraph. We can trivially obtain it by replacing any $H$ in our family which is not strictly $2$-balanced by its subgraph $H'$ which maximises $d_2(H').$ In particular, $\HH$ is a family of strictly balanced $2$-graphs $H$ satisfying $m_2(H)=d_2(H)\ge M,$ with at most $m$ vertices and with the property that if a graph is $\HH$-free, then it satisfies $\alpha_m \ge r$. Note that $t\le 2^{\binom{m}{2}},$ so is in particular bounded by a constant (depending on $m$).

Let $G \sim \G(n,p),$ where we choose $p:=1/(48tn^{1/M})$ and will be assuming $n$ to be large enough throughout. Let $A_K$ denote the event that a subset $K \subseteq V(G)$, consisting of $k:=\frac{8\log n}{p}+2=O(n^{1/M}\log n)$ vertices, spans an independent set. In particular, we have $\P(A_k)=(1-p)^{\binom{k}{2}}$. Let $B_j^{i}$ denote the event that we find a copy of $H_i$ at the $j$-th possible location (so fixing the subset of vertices of $G$ where we could find $H_i$ and the labellings of vertices). In particular, $\P(B_j^i)=p^{e(H_i)}.$ Our goal is to show that with positive probability none of the events $A_K$ or $B_j^i$ occur, which implies that there is an $\HH$-free graph with no independent set of size $O(n^{1/M}\log n)$ as desired. We will do so by using the asymmetric version of the Lov\'asz local lemma (see Lemma 5.1.1 in \cite{alon-spencer}). To apply the lemma we first need to understand how many dependencies there are between different types of events. In particular, given $A_K$ it depends only on $\binom{k}{2}$ edges of $G,$ so in particular it is mutually independent of all events $B_{i}^j$ which do not contain one of these edges. In particular, it is mutually independent from all but at most $ k^2 n^{|H_i|-2}$ events $B_{i}^j$ and at most $n^k$ other events $A_{K'}$ (since there are at most this many such events in total). Similarly, any $B_j^i$ is mutually independent of all but at most $e(H_i)n^{|H_{i'}|-2}\le m^2n^{|H_{i'}|-2}$ events $B_j^{i'}$ for a fixed $i'$ and at most $n^k$ events $A_k$. 

We now need to choose parameters $x$ (corresponding to events of type $A_K$) and $y_i$ (corresponding to events of type $B_i^j$) such that $$\P(A_K) \le x\cdot (1-x)^{n^k} \cdot \prod_{i} (1-y_i)^{k^2n^{|H_i|-2}} \:\:\:\:\: \text{ and } \:\:\:\:\:  \P(B_i^j) \le y_i \cdot (1-x)^{n^k} \cdot \prod_{i'} (1-y_{i'})^{m^2n^{|H_{i'}|-2}}$$ which will complete the proof. We choose $x=1/n^k$ so that in particular $(1-x)^{n^k} \ge 1/3$ (as $n$ is large) and $y_i=p/(8tn^{|H_i|-2}) \le 1/2$ so that in particular $(1-y_i)^{n^{|H_i|-2}} \ge e^{-p/(4t)}$ (using $1-a \ge e^{-2a}$ for $a \le 1/2$). With these choices we obtain
$$x\cdot (1-x)^{n^k}\cdot\prod_{i} (1-y_i)^{k^2n^{|H_i|-2}} \ge \frac{1}{n^k} \cdot \frac13 \cdot e^{-pk^2/4}\ge  e^{-2k\log n-pk^2/4} = e^{-p\binom{k}{2}} \ge (1-p)^{\binom{k}{2}} = \P(A_K) \:\:\text{ and}$$ 
\vspace{-0.6cm}
\begin{align*} y_i \cdot (1-x)^{n^k} \cdot \prod_{i'} (1-y_{i'})^{m^2n^{|H_{i'}|-2}} & \ge \frac{p}{8tn^{|H_i|-2}} \cdot \frac{1}{3} \cdot e^{-m^2p/4} \ge \frac{p}{48t\cdot n^{\frac{|H_i|-2}{e(H_i)-1}\cdot (e(H_i)-1)}}  \ge \frac{p}{(48tn)^{\frac{1}{M}\cdot (e(H_i)-1)}} \\ & \ge p^{e(H_i)} \ge \P(B_i^j).
\end{align*}
Here, in the second inequality, we used the fact that $m$ is a constant while $p \to 0$ so $m^2p \to 0$ and in particular $e^{-m^2p/4} \le 1/2$ (since $n$ is large). In the third inequality we used $n^{-\frac{|H_i|-2}{e(H_i)-1}} \ge n^{-\frac{1}{M}},$ which follows since $M$ is equal to the minimum of $\frac{e(H_i)-1}{|H_i|-2}$ over $H_i$ (and we used $|H_i| \ge 3$ to put the $48t$ factor under the exponent).
\end{proof}

\textbf{Remark.} This result appears to be the best one can expect to get using random graphs, up to the polylog factor. The polylog factor can likely be slightly improved compared to the above argument by using the $\mathcal{H}$-free process (see e.g. \cite{osthus-h-free} for more details about this process). 

If we replace $m_2(H)$ in the definition of $M(m,r)$ with $d_2(H)=\frac{e(H)-1}{|H|-2}$ the problem of determining $M(m,r)$ would reduce to the classical Tur\'an's theorem. Indeed, since the number of vertices is fixed, minimising $d_2(H)$ is tantamount to minimising the number of edges in an $m$-vertex graph with $\alpha(H) <r$ and upon taking complements we reach the setting of the classical Tur\'an's theorem. This is why it is natural to call our problem of determining $M(m,r)$ the $2$-density Tur\'an problem. Note that since $m_2(H) \ge d_2(H)$ the proposition also holds if we replace $M$ with $\min d_2(H)$. This essentially recovers the argument of Linial and Rabinovich \cite{L-R}. However, it turns out one can in many cases do much better by using the actual $2$-density.

\subsection{The 2-density Tur\'an problem}\label{sec:turan-2density}
In this subsection, we show our results concerning the $2$-density Tur\'an problem of determining $M(m,r)$ which together with \Cref{prop:m2-loc-ind} give upper bounds in the local to global independence number problem mentioned in the introduction.

\subsubsection{Triangle-free case}\label{sec:triangle-free-ub}
Here we show our bounds for the case $k=3$. This means that $m$ and $r$ satisfy $2r-1\le m \le 3r-3$ and as expected the behaviour will be very different at the beginning and end of the range. Our first observation determines $M(2r-1,r)$.
\begin{prop}\label{prop-2r-1}
Let $r \ge 2$. Then $M(2r-1,r)=m_2(C_{2r-1})=1+\frac{1}{2r-3}.$
\end{prop}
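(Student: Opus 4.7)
The plan is to exhibit $C_{2r-1}$ as the extremal example and then show every competitor contains a short odd cycle giving the same or larger $2$-density.

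First, for the upper bound, observe that $C_{2r-1}$ has exactly $2r-1$ vertices and independence number $r-1$ (any independent set in an odd cycle of length $2\ell+1$ has at most $\ell$ vertices), so it is a valid member of the family over which $M(2r-1,r)$ is minimised. Its $2$-density is easy to compute: it has $2r-1$ edges, so $d_2(C_{2r-1}) = \frac{(2r-1)-1}{(2r-1)-2} = \frac{2r-2}{2r-3} = 1 + \frac{1}{2r-3}$. Moreover, any proper subgraph $H'$ of $C_{2r-1}$ with at least $3$ vertices is a disjoint union of paths, so $e(H') \le |H'|-1$, giving $d_2(H') \le 1$. Hence $C_{2r-1}$ is strictly $2$-balanced and $m_2(C_{2r-1}) = 1 + \frac{1}{2r-3}$, which yields $M(2r-1,r) \le 1 + \frac{1}{2r-3}$.

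For the matching lower bound, let $H$ be any graph on $2r-1$ vertices with $\alpha(H) \le r-1$. The crucial observation is that $H$ cannot be bipartite: in any bipartition one side would contain at least $\lceil (2r-1)/2 \rceil = r$ vertices, giving an independent set of size $r$. Thus $H$ contains an odd cycle; let $C_{2\ell+1}$ be a shortest one, where $2\ell+1 \le 2r-1$, i.e.\ $\ell \le r-1$. Then
\[
m_2(H) \ge d_2(C_{2\ell+1}) = \frac{2\ell}{2\ell-1} = 1 + \frac{1}{2\ell-1} \ge 1 + \frac{1}{2r-3},
\]
using $\ell \le r-1$ in the last inequality. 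Taking the minimum over all such $H$ gives $M(2r-1,r) \ge 1 + \frac{1}{2r-3}$, which combined with the upper bound completes the proof.

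There is no real obstacle here; the only subtlety is making sure the correct $2$-density calculation on $C_{2r-1}$ coincides with $m_2$ (verified by checking that paths are sparser), and using the parity/counting argument that an $(2r-1)$-vertex graph with no independent set of size $r$ cannot be bipartite. This pinpoints short odd cycles as the extremal objects, which aligns with the intuition that $M(m,r)$ is controlled by densest-possible sparse local structures avoiding large independent sets.
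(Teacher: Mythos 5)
Your proof is correct and follows essentially the same route as the paper: exhibit $C_{2r-1}$ for the upper bound, and for the lower bound note that a $(2r-1)$-vertex graph without an independent set of size $r$ cannot be bipartite, hence contains a short (odd) cycle whose $2$-density is at least that of $C_{2r-1}$. The paper phrases the lower bound slightly differently (any cycle if one exists, else the graph is a forest and hence bipartite, a contradiction), but the argument and the key observation are the same.
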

\begin{proof}
Since $C_{2r-1}$ is a $2r-1$ vertex graph with no independent set of size $r$ we obtain $M(2r-1,r)\le m_2(C_{2r-1}).$ For the lower bound let $G$ be a graph on $2r-1$ vertices with $\alpha(G) \le r-1$, our goal is to show $m_2(G) \ge m_2(C_{2r-1})$. If $G$ contains a cycle of length $\ell,$ then $m_2(G) \ge m_2(C_\ell) \ge m_2(C_{2r-1})$ where in the last inequality we used $\ell \le 2r-1$ since $G$ has only $2r-1$ vertices. If $G$ contains no cycles it is a forest so in particular it is bipartite. One part of the bipartition must have at least $r$ vertices giving us an independent set of size at least $r$, which is a contradiction. 
\end{proof}

Turning to the other end of the range we show.
\begin{thm}
For $r \ge 2$ we have $M(3r-4,r)\ge \frac{5}{3}-\frac{1}{r-2}.$
\end{thm}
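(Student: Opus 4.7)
The plan is as follows. Set $m = 3r-4$ and $c = \tfrac{5}{3} - \tfrac{1}{r-2} = \tfrac{5r-13}{3(r-2)}$. Given any graph $H$ on $m$ vertices with $\alpha(H) \le r-1$, I aim to exhibit a subgraph $H' \subseteq H$ with $|H'|\ge 3$ satisfying $d_2(H') \ge c$. Since $c < \tfrac{5}{3} < 2 = m_2(K_3)$, if $H$ contains any triangle we are immediately done, so I assume $H$ is triangle-free; then every neighborhood in $H$ is independent, giving $\Delta(H) \le \alpha(H) \le r-1$.

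My strategy is to argue that at least one of the following two witnesses is always available: either (a) $e(H) \ge 5r - 12$, so that taking $H' = H$ gives $d_2(H) = \tfrac{e(H)-1}{3r-6} \ge c$; or (b) $H$ contains a copy of $K_{2,3}$, so that taking $H'$ to be that copy gives $d_2(H') = m_2(K_{2,3}) = \tfrac{5}{3} > c$. Note that (b) is equivalent to some pair of vertices of $H$ having at least $3$ common neighbors.

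To show (a) or (b) must occur, I would attempt a double-counting argument. If $H$ has no $K_{2,3}$, every pair of vertices of $H$ has at most $2$ common neighbors, so
\[
\sum_{v} \binom{d(v)}{2} = \sum_{\{u,w\}} |N(u)\cap N(w)| \le 2\binom{m}{2}.
\]
By Cauchy--Schwarz, this upper bound on $\sum d(v)^2$ constrains $e(H)$ in a fashion which, when balanced against the Caro--Wei-type inequality $r - 1 \ge \alpha(H) \ge \sum_v \tfrac{1}{d(v)+1} \ge \tfrac{m^2}{2e(H)+m}$ (both forced by the hypotheses), should force $e(H) \ge 5r - 12$ once the fixed value $m = 3r-4$ is plugged in, establishing (a) whenever (b) fails.

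The main obstacle I anticipate is pinning down the sharp lower-order term $-\tfrac{1}{r-2}$: a crude combination of the above gives only $c - O(1/\sqrt{r})$. To get the exact constant I would additionally analyze the local structure around a maximum-degree vertex $v$. When $\Delta(H) = r-1$, partitioning $V(H) = \{v\} \sqcup N(v) \sqcup W$ gives $|W| = 2r-4$ together with (i) $\alpha(H[W]) \le r - 2$ (since $v$ has no neighbor in $W$, any independent set of $W$ extends by $v$), and (ii) a Hall-type expansion $|T| \le |N_{N(v)}(T)|$ for every independent $T \subseteq W$ (from $T \cup (N(v) \setminus N_{N(v)}(T))$ being independent in $H$). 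Feeding (i) and (ii) into Staton--Jones-type lower bounds for $\alpha$ in sparse triangle-free graphs, as hinted in the introduction, should pin down the exact correction; the smaller-$\Delta$ cases I would handle by an analogous but simpler argument.
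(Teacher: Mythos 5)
Your main strategy has a real gap. The dichotomy ``either $e(H)\ge 5r-12$ or $H\supseteq K_{2,3}$'' is the right shape, but the tools you propose do not establish it. The Caro--Wei/convexity bound $\alpha(H)\ge \sum_v\tfrac1{d(v)+1}\ge \tfrac{m^2}{2e(H)+m}$ only yields $e(H)\ge \tfrac{m(m-r+1)}{2(r-1)}=\tfrac{(3r-4)(2r-3)}{2(r-1)}\approx 3r$, which is well short of the target $5r-12$ by a \emph{multiplicative} factor, not just a lower-order term (so the claim that this gives $c-O(1/\sqrt r)$ is an overestimate). The double-counting from $K_{2,3}$-freeness, $\sum_v\binom{d(v)}{2}\le 2\binom{m}{2}$, points the wrong way: combined with Cauchy--Schwarz it gives an \emph{upper} bound on $e(H)$, so it cannot be used to push $e(H)$ up to $5r-12$ in the $K_{2,3}$-free case. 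The crucial fact exploiting triangle-freeness is much stronger than Caro--Wei, and it is not recovered by these two steps.

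The paper's proof uses a different easy reduction: if some subgraph $G'\subseteq G$ has minimum degree at least $4$, then $d_2(G')\ge\tfrac{2|G'|-1}{|G'|-2}>2$ and we are already done (triangles are handled the same way). Hence one may assume $G$ is triangle-free \emph{and} $3$-degenerate, and then a modification of Jones's theorem on independent sets in sparse triangle-free graphs (proved in Appendix C of the paper) gives $e(G)\ge 6m-13(r-1)-1=5r-12$, from which $d_2(G)\ge\tfrac{5r-13}{3r-6}$ follows immediately. You gesture at Staton--Jones-type bounds in your last paragraph, which is indeed the right family of results, but you never obtain the bounded-degeneracy hypothesis these results require: Staton's and Jones's theorems are about graphs of maximum degree $3$ or $4$, and your decomposition around a vertex of degree $\Delta(H)=r-1$ does not control degrees elsewhere. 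Without first reducing to $3$-degeneracy via the $m_2>2$ observation, the Staton--Jones machinery does not apply, and your argument cannot close.
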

\begin{proof}
Let $G$ be a graph on $m=3r-4$ vertices with $\alpha(G) \le r-1$. If $G$ contains a triangle, then $m_2(G) \ge 2$ and we are done. If $G$ contains a subgraph $G'$ on $m'$ vertices with minimum degree at least $4$, then $m_2(G) \ge d_2(G') \ge \frac{2m'-1}{m'-2}>2$ so again we are done. In particular, we may assume that $G$ is $3$-degenerate. These conditions allow us to apply a modification of a result of Jones \cite{jones} (see \Cref{appendixC} for more details about the modification) which tells us that $e(G) \ge 6m-13(r-1)-1=5r-12$. This implies $m_2(G) \ge d_2(G) \ge \frac{5r-13}{3r-6}= \frac{5}{3}-\frac{1}{r-2}$ as claimed.
\end{proof}

This is close to the best possible, for example, the chain graph $H_r$ (see \cite{jones} for more details) has $3r-4$ vertices, no independent set of size $r$ and $m_2(H_r)=\frac{5}{3}-\frac{1}{9}\cdot \frac{1}{r-2}$. We believe that as in the problem of \cite{jones}, these graphs should be optimal, it is not hard to verify that this is indeed the case for the first few values of $r$ and one can improve our result above by repeating more carefully the stability type argument from \cite{jones} for our graphs.

In the above result, we did not look at the very end of the range for $k=3$, namely $m=3r-3$. The reason is that it seems to behave differently. Of course, $M(3r-3,r) \ge M(3r-4,r)$ so the same bound as above applies, however, it seems possible that a stronger bound is the actual truth, it is even possible that the answer jumps to $M(3r-3,r)\ge 2$.

\subsubsection{Independence number two.}\label{sec:independence-number-two}
In this subsection, we solve the $2$-density Tur\'an problem for graphs with no independent sets of size $3$. 
The behaviour depends on the parity of $m$, we begin with the easier case when $m$ is even.

\begin{lem}\label{thm:m2k3-lb}
For any $k \ge 2$ we have $M(2k,3) \ge (k+1)/2.$
\end{lem}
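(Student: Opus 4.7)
The plan is to exploit the structural consequence of forbidding an independent set of size~$3$: for any vertex $v\in V(H)$, the set of non-neighbors of $v$ (other than $v$ itself) must induce a clique in $H$, because otherwise two non-adjacent non-neighbors together with $v$ would form an independent set of size~$3$. This turns a question about $\alpha(H)\le 2$ into a combination of a degree condition and clique detection, which are exactly the two ingredients that feed into $m_2$.

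I would then split into two cases according to whether $H$ has a low-degree vertex. If some vertex $v$ has $d(v)\le k-1$, then $|V(H)\setminus N[v]|\ge 2k-1-(k-1)=k$, and by the structural observation this set spans a clique, so $H$ contains $K_k$ as a subgraph. Plugging $K_k$ into the definition of $m_2$ gives
\[
m_2(H) \;\ge\; d_2(K_k) \;=\; \frac{\binom{k}{2}-1}{k-2} \;=\; \frac{(k-2)(k+1)}{2(k-2)} \;=\; \frac{k+1}{2}.
\]
In the complementary case $\delta(H)\ge k$, so by the handshake lemma $e(H)\ge \tfrac12\cdot 2k\cdot k = k^2$, and choosing $H'=H$ itself in the definition of $m_2$ yields
\[
m_2(H) \;\ge\; \frac{e(H)-1}{2k-2} \;\ge\; \frac{k^2-1}{2k-2} \;=\; \frac{k+1}{2}.
\]
Either way $m_2(H)\ge (k+1)/2$, proving the lemma.

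The approach is short and the main idea is the clique-on-non-neighbors observation; I do not anticipate a real obstacle, only the mild numerical check that both cases land at exactly the threshold $(k+1)/2$ (which is not a coincidence since the construction $K_k\sqcup K_k$ is tight: the $K_k$ part has $2$-density $(k+1)/2$, and that graph has $\delta=k-1$ and $e=k^2-k$, sitting right on the boundary of the case split). The one caveat is that when $k=2$ the clique $K_k$ has only two vertices and does not legally participate in the $m_2$ maximum; for $k\ge 3$ the argument runs without modification.
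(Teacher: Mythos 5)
Your proof is essentially the paper's: both use the observation that the non-neighbors of any vertex span a clique to conclude that either $K_k\subseteq G$ (giving $m_2(G)\ge m_2(K_k)=(k+1)/2$) or else $\delta(G)\ge k$ (giving $e(G)\ge k^2$ and hence $d_2(G)\ge (k+1)/2$). Your caveat about $k=2$ is a sharp observation that applies equally to the paper's own proof (indeed $K_2\sqcup K_2$ shows the stated bound fails there), but for $k\ge 3$ both arguments go through exactly as written.
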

\begin{proof}
Let $G$ be a graph on $2k$ vertices with $\alpha(G)\le 2.$ This condition implies that for any vertex $v$ of $G$ the set of vertices not adjacent to $v$ must span a clique since otherwise, the missing edge together with $v$ makes an independent set in $G$ of size $3$. On the other hand, if we can find $K_k \subseteq G,$ then $m_2(G) \ge m_2(K_k)=\frac{k+1}{2}$ and we are done. So we may assume $G$ is $K_k$-free. Combining these two observations implies every vertex has at most $k-1$ non-neighbours and in particular $\delta(G) \ge 2k-1 -(k-1)=k$. This in turn implies $m_2(G) \ge \frac{e(G)-1}{|G|-2}\ge \frac{k^2-1}{2k-2}=(k+1)/2$ completing the proof.
\end{proof}

We now turn to the more involved case of odd $m=2k-1$. The increase in difficulty is partially due to the fact that the answer becomes very close (but not equal) to $m_2(K_k)$ which we have seen above is the answer for graphs with one more vertex. So the bound we need to show is much stronger in the odd case. We begin with the following lemma which is at the heart of our argument. We state it for the complement of our actual graphs for convenience.

\begin{lem}\label{lem:up-bip}
Let $k \ge 5$ and $1 \le t < \sqrt{(k-1)/2}$. Let $G$ be a triangle-free graph on $2k-1$ vertices with the property that any $k$ of its vertices span at least $t+1$ edges. Then $e(G) \le (k-1)^2-t^2+1$.
\end{lem}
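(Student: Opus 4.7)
The plan is to argue by contradiction: assume $e(G) \ge (k-1)^2 - t^2 + 2$. An immediate consequence of applying the $k$-vertex hypothesis to any $k$ vertices of an independent set is $\alpha(G) \le k-1$; since $G$ is triangle-free, every neighborhood is independent, so $\Delta(G) \le \alpha(G) \le k-1$. Conversely, $2e(G) \le (2k-1)\Delta(G)$ combined with our lower bound on $e(G)$ and the hypothesis $t^2 < (k-1)/2$ forces $\Delta(G) > k-2$, hence $\Delta(G) = k-1$.

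Next, fix a vertex $v$ with $d(v) = k-1$ and write $N = N(v)$ (an independent set of size $k-1$) and $U = V(G) \setminus N[v]$ (also of size $k-1$). Since $v$ has no edges into $U$ and $N$ has no internal edges, one gets the decomposition
\[
e(G) = (k-1) + e(N,U) + e(U).
\]
Applying the $k$-vertex hypothesis to $N \cup \{w\}$ for each $w \in U$ yields $d_N(w) \ge t+1$, and to $U \cup \{v\}$ gives $e(U) \ge t+1$. Triangle-freeness implies that for every edge $ww' \in E(U)$, the sets $N(w) \cap N$ and $N(w') \cap N$ are disjoint, so $d_N(w) + d_N(w') \le k-1$.

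Setting $s_w := (k-1) - d_N(w)$ for $w \in U$, we have $s_w \in [0, k-t-2]$, and the constraint $s_w + s_{w'} \ge k-1$ holds for each edge $ww' \in E(U)$ (which in particular forces $s_w \ge t+1$ whenever $w$ has a neighbor in $U$). Let $\delta := \sum_{w \in U} s_w = (k-1)^2 - e(N,U)$. A short computation shows the target bound $e(G) \le (k-1)^2 - t^2 + 1$ is equivalent to the inequality
\[
\delta \ge k + e(U) + t^2 - 2.
\]

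The core of the proof is establishing this inequality via an analysis of $G[U]$. Let $w^* \in U$ achieve the maximum degree $d^* := d_U(w^*)$ in $G[U]$; triangle-freeness makes $N_U(w^*)$ independent in $G$. Summing the constraints $s_{w^*} + s_{w'} \ge k-1$ over $w' \in N_U(w^*)$ gives
\[
\delta \ge s_{w^*} + d^*(k-1-s_{w^*}) \ge (k-t-2) + d^*(t+1)
\]
(optimized at $s_{w^*} = k-t-2$ for $d^* \ge 2$). When $d^* \ge t+1$ and $e(U) = t+1$ (the star case), this already equals $k + t^2 + t - 1$, matching the target with equality; this is precisely the extremal configuration (a blow-up of $C_5$-type graph achieving the bound). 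For the remaining cases --- larger $e(U)$ with $d^* = t+1$, or $d^* \le t$ --- I would combine this degree-based bound with a matching/LP bound obtained by summing the edge constraints over a matching in $G[U]$ (yielding $\delta \ge \mu(G[U])(k-1)$), carefully accounting for edges not incident to $w^*$ and using the upper bound $s_w \le k-t-2$ to force vertices touched by edges to satisfy $s_w \ge t+1$. The main obstacle is exactly this last case analysis: making the degree-based and matching-based lower bounds on $\delta$ work together to yield the tight inequality $\delta \ge k + e(U) + t^2 - 2$ uniformly over every possible structure of the triangle-free graph $G[U]$.
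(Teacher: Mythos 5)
Your setup is correct and is a genuinely different reformulation from the paper's: fixing a maximum-degree vertex $v$, writing $e(G)=(k-1)+e(N,U)+e(U)$, passing to the quantities $s_w=(k-1)-d_N(w)$, and reducing the target to the clean inequality $\delta:=\sum_{w\in U}s_w\ge k+e(U)+t^2-2$ subject to $t+1\le s_w\le k-t-2$ (for non-isolated $w$) and $s_w+s_{w'}\ge k-1$ on every edge of $G[U]$, with $e(U)\ge t+1$. All of these deductions (the $\Delta(G)=k-1$ step, the edge-count decomposition, the disjointness of $N(w)\cap N$ and $N(w')\cap N$ for $ww'\in E(U)$, and the algebraic equivalence of the target) check out, and you correctly identify the star with $d^*=t+1$ as the tight extremal configuration corresponding to the $C_5$-blow-up. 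This is a nice LP-style reduction that is conceptually cleaner than the paper's approach.

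However, the decisive step---establishing $\delta\ge k+e(U)+t^2-2$ uniformly over every triangle-free $G[U]$---is stated as a plan, not proved, and you say so explicitly. This is a genuine gap, not a routine verification. Your stated degree-based bound $\delta\ge(k-t-2)+d^*(t+1)$ alone already fails for simple configurations: if $G[U]$ is a $5$-cycle then $d^*=2$ and the bound gives only $k+t$, far short of the target $k+t^2+3$. One must therefore bring in a matching-type bound $\delta\ge\mu(G[U])(k-1)+(\text{uncovered non-isolated vertices})\cdot(t+1)$, and showing that the two bounds jointly dominate $k+e(U)+t^2-2$ for every triangle-free $G[U]$ on at most $k-1$ vertices (double stars, complete bipartite pieces, paths, long even and odd cycles, disjoint unions thereof) is exactly the nontrivial content. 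Whether it can be closed purely from the per-edge LP constraints, or whether additional constraints from the original $k$-vertex hypothesis are needed, is left open by your sketch. For comparison, the paper avoids the LP route altogether: after establishing $\Delta(G)=k-1$, it extracts a second near-maximum independent set in $L=G\setminus R$, identifies the two exceptional vertices $u,v$, shows $G$ minus a star is a subgraph of a blow-up of $C_5$ with parts $\{v\},R_2,L_1,R_1,L_2$, and then does explicit casework on whether $u$ lies in $L_1$ or $L_2$ and on the star size $s$. The paper's argument is more hands-on but actually closes; yours would be cleaner if the LP inequality were proved, but as written it is incomplete.
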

\begin{proof}
The following easy claim will be used at various points in the proof. It also provides an illustration for the flavour of the more involved arguments we will be using later.

\begin{claim*}
If there are $2$ vertex disjoint independent sets of order $k-1,$ then $e(G) \le (k-1)^2-t^2+1.$
\end{claim*}

\begin{proof}
Let $v$ be the (only) vertex not belonging to either of the independent sets, which we call $L$ and $R$. Let $i$ denote the number of neighbours of $v$ in $L$ and $j$ in $R$. Observe first that $v \cup L$ and $v \cup R$ are both sets of $k$ vertices so need to span at least $t+1$ edges, by our main assumption on $G$. Since all edges in these sets are incident to $v$ ($L$ and $R$ are both independent sets) we conclude that $i,j \ge t+1$. Note further that, since $G$ is triangle-free, there can be no edges between neighbours of $v$, which means that there can be at most $(k-1)^2-ij$ edges in $L \cup R = G \setminus v$. Adding the $i+j$ edges incident to $v$ we obtain $e(G) \le i+j+(k-1)^2-ij=(k-1)^2+1-(i-1)(j-1) \le (k-1)^2+1-t^2.$
\end{proof}

We now proceed to obtain some information on the structure of $G$. Observe first that by our assumption on $t$ we have $(k-1)^2-t^2+1 > (k-1)^2-(k-1)/2+1=k^2-5k/2+5/2$ so if we can show $e(G) \le k^2-5k/2+5/2$ we are done. So let us assume $e(G) \ge k^2-5k/2+3$, which will suffice to give us some preliminary information about $G$. 

Since $G$ is triangle-free, neighbours of any vertex span an independent set. By our main assumption on $G$ there can be no independent set of order $k$ so $\Delta(G) \le k-1$. On the other hand, we have $\Delta(G)\ge 2e(G)/(2k-1) \ge (2k^2-5k+6)/(2k-1) > k-2,$  so $\Delta(G)=k-1.$ In particular, there exists a vertex with $k-1$ neighbours, which means that there is an independent set $R$ of size $k-1$ in $G$. If every vertex in $R$ has degree at most $k-3,$ then the sum of degrees in $G$ is at most $k(k-1)+(k-1)(k-3)=2k^2-5k+3 < 2e(G)$. So there is a vertex in $R$ with degree at least $k-2$ and in particular, there exists an independent set of size at least $k-2$ disjoint from $R$. In other words, $L:=G \setminus R$ contains an independent set of size $k-2$ and two remaining vertices, say $v$ and $u$ (see \Cref{fig:1} for the illustration of the current state). Let us w.l.o.g. assume that $v$ has at most as many neighbours in $L$ as $u$.
\begin{figure}
\begin{minipage}[t]{0.35\textwidth}
\centering
\captionsetup{width=\textwidth}
\begin{tikzpicture}[scale=0.98]
\defPt{0}{0}{l}
\defPt{3}{0}{r}
\defPt{0.4}{0.8}{d1}
\defPt{-0.4}{0.8}{d2}
\defPtm{($(l)+(d1)$)}{u}
\defPtm{($(l)+(d2)$)}{v}

\fitellipsis{$(l)+(0,1.2)$}{$(l)-(0,1.2)$}{0.8};
\fitellipsis{$(r)+(0,1.2)$}{$(r)-(0,1.2)$}{0.8};

\pic[rotate=45,scale=0.7] at ($(l)-(0,0.3)$) {K4};
\pic[rotate=18, scale=0.7] at ($(r)$) {K5};

\node[] at ($(u)+(0,-0.25)$) {$v$};
\node[] at ($(v)+(0,-0.25)$) {$u$};
\node[] at ($(l)+(-1,1.1)$) {$L$};
\node[] at ($(r)+(1,1.1)$) {$R$};

\draw[] (u) \smvx;
\draw[] (v) \smvx;
\end{tikzpicture}
\caption[First figure]{Initial structure for $k=6$, \\ dotted lines depict  missing edges.}
\label{fig:1}
\end{minipage}\hfill
\begin{minipage}[t]{0.6\textwidth}
\centering
\begin{tikzpicture}[yscale=0.7, xscale=1.2]
\defPt{0}{0}{l}
\defPt{3}{0}{r}
\defPt{0.4}{0.8}{d1}
\defPt{-0.4}{0.8}{d2}
\defPtm{($(l)+(d1)$)}{u}
\defPtm{($(l)+(d2)$)}{v}

\foreach \x in {1,...,5}{%
    \pgfmathparse{(\x-1)*360/5}
    \defPtm{(\pgfmathresult:2.2cm)}{v\x}
    }

\draw[] (v1) \smvx;

\defPt{-0.8}{0.15}{d1}
\defPt{0.7}{-0.4}{d2}
\defPtm{($(v2)+(d1)$)}{v21}
\defPtm{($(v2)+0.33*(d1)+0.66*(d2)$)}{v22}
\defPtm{($(v2)+0.66*(d1)+0.33*(d2)$)}{v23}
\defPtm{($(v2)+(d2)$)}{v24}

\draw[] (v21) \smvx;
\draw[] (v22) \smvx;
\draw[] (v23) \smvx;

\defPt{0.5}{0.65}{d1}
\defPt{-0.5}{-0.5}{d2}

\defPtm{($(v3)+(d1)$)}{v31}
\defPtm{($(v3)+0.33*(d1)+0.66*(d2)$)}{v32}
\defPtm{($(v3)+0.66*(d1)+0.33*(d2)$)}{v33}
\defPtm{($(v3)+(d2)$)}{v34}

\draw[] (v31) \smvx;
\draw[] (v32) \smvx;
\draw[] (v33) \smvx;

\foreach \i in {1,...,4}
{
\foreach \j in {1,...,4}
{
\draw[dashed] (v2\i) -- (v3\j);
}
}

\defPt{-0.8}{-0.15}{d1}
\defPt{0.7}{0.4}{d2}
\defPtm{($(v5)+(d1)$)}{v51}
\defPtm{($(v5)+0.33*(d1)+0.66*(d2)$)}{v52}
\defPtm{($(v5)+0.66*(d1)+0.33*(d2)$)}{v53}
\defPtm{($(v5)+(d2)$)}{v54}

\draw[] (v21) \smvx;
\draw[] (v22) \smvx;
\draw[] (v23) \smvx;

\defPt{0.5}{-0.65}{d1}
\defPt{-0.5}{0.5}{d2}

\defPtm{($(v4)+(d1)$)}{v41}
\defPtm{($(v4)+0.33*(d1)+0.66*(d2)$)}{v42}
\defPtm{($(v4)+0.66*(d1)+0.33*(d2)$)}{v43}
\defPtm{($(v4)+(d2)$)}{v44}

\draw[] (v31) \smvx;
\draw[] (v32) \smvx;
\draw[] (v33) \smvx;

\foreach \i in {1,...,4}
{
\foreach \j in {1,...,4}
{
\draw[dashed] (v4\i) -- (v5\j);
}
}

\foreach \i in {1,...,4}
{
\foreach \j in {1,...,4}
{
\draw[dashed] (v3\i) -- (v4\j);
}
}

\begin{scope}[rotate around={72:(0,0)}]
\draw[line width=1.5 pt] (v1) -- ($(v2)+(-0.3,-1.5)$);
\draw[line width=1.5 pt] (v1) -- ($(v2)+(0,1.1)$);
\draw[line width=1.5 pt] (v1) -- ($(v2)+0.7*(-0.3,-1.5)+0.3*(0,1.1)$);
\fitellipsis{$(v2)+(0,1.1)$}{$(v2)+(-0.3,-1.5)$}{0.7};
\end{scope}

\begin{scope}[rotate around={144:(0,0)}]
\fitellipsis{$(v3)+(0,1.1)$}{$(v3)+(0,-1.1)$}{0.7};
\end{scope}

\begin{scope}[rotate around={216:(0,0)}]
\fitellipsis{$(v4)+(0,1.1)$}{$(v4)+(0,-1.1)$}{0.7};
\end{scope}

\begin{scope}[rotate around={288:(0,0)}]
\draw[line width=1.5 pt] (v1) -- ($(v5)+(-0.3,1.5)$);
\draw[line width=1.5 pt] (v1) -- ($(v5)+(0,-1.1)$);
\draw[line width=1.5 pt] (v1) -- ($(v5)+0.7*(-0.3,1.5)+0.3*(0,-1.1)$);

\fitellipsis{$(v5)+(-0.3,1.5)$}{$(v5)+(0,-1.1)$}{0.7};
\end{scope}

\defPt{-1.8}{0.55}{v3}

\node[] at ($(v2)+(1.3,0.3)$) {$R_2$};
\node[] at ($(v1)+(0.25,0)$) {$v$};
\node[] at ($(v3)+(-0.8,1.5)$) {$L_1$};
\node[] at ($(v4)+(-0.8,-0.8)$) {$R_1$};
\node[] at ($(v5)+(1.3,-0.35)$) {$L_2$};

\draw[] ($(v3)+(0.45,0.5)$) \smvx;
\node[] at ($(v3)+(0.5,0.8)$) {$u$};

\draw[line width=1.5 pt] ($(v3)+(0.45,0.5)$) -- ($(v3)-(0,-1.3)$);
\draw[line width=1.5 pt] ($(v3)+(0.45,0.5)$) -- ($(v3)-(0.35,-0.7)$);
\draw[line width=1.5 pt] ($(v3)+(0.45,0.5)$) -- ($(v3)-(0.4,0)$);

\draw[line width=1.5 pt] ($(v3)+(0.45,0.5)$) -- ($(v5)+(0.6,0.5)$);
\draw[line width=1.5 pt] ($(v3)+(0.45,0.5)$) -- (v5);
\draw[line width=1.5 pt] ($(v3)+(0.45,0.5)$) -- ($(v5)+(0.3,0.25)$);

\end{tikzpicture}
\captionsetup{width=\textwidth}
\caption{Blow-up of $C_5$ structure, dashed lines denote only possible locations of edges, there can be no edges between non-adjacent parts or inside parts, apart from those belonging to $S$.
}
\label{fig:2}
\end{minipage}
\end{figure}

This almost gives us the situation in the claim above. In particular, by the claim, we may assume that both $u$ and $v$ have at least one neighbour in $L\setminus\{u,v\}$.  

We now proceed to obtain more detailed information on how $G$ should look like. 
Denote by $L_2:=N(v) \cap L, R_2:=N(v) \cap R, L_1=L \setminus (L_2 \cup \{v\})$ and $R_1:=R \setminus R_2$. Note that all edges within $L_1 \cup L_2$ must touch $u$ so $L_1 \cup L_2$ induces a star $S$ with a centre at $u$, say of size $s$. Note further that no edges in $R_2 \cup L_2=N(v)$ can exist as $G$ is triangle-free. Putting these observations together we conclude that $G$ without edges of $S$ is a subgraph of a blow-up of $C_5$ with parts $\{v\}, R_2, L_1, R_1$ and $L_2$ in order (see \Cref{fig:2} for an illustration in the case when $u \in L_1$).

This means that $v$ contributes $|L_2|+|R_2|$ edges while the remaining edges all come between $L_1 \cup L_2$ and $R_1 \cup R_2$ and $S$. Since $|R_1 \cup R_2|=|L_1 \cup L_2|=k-1$ and there can be no edges between $R_2$ and $L_2$ there are $(k-1)^2-|L_2||R_2|-X$ edges between $L_1 \cup L_2$ and $R_1 \cup R_2$ where $X$ denotes the number of non-edges between $R_2$ and $L_1$, $L_1$ and $R_1,$ and $R_1$ and $L_2$. In total we have $e(G) = |L_2|+|R_2|+(k-1)^2-|L_2||R_2|-X+s$.

Let us denote by $i=|R_2|$ and $j=|L_2|$ so $k-1-j=|L_1|;$ $k-1-i=|R_1|$ and $e(G)=i+j+(k-1)^2-ij-X+s=(k-1)^2+1-(i-1)(j-1)-X+s.$ Since by our main assumption on $G$ there needs to be at least $t+1$ edges among the $k$ vertices $v \cup R_1 \cup R_2$ and we know there are exactly $|R_2|=i$ edges in this set ($R_1 \cup R_2$ is an independent set) we conclude that $i \ge t+1.$ Similarly, we know $s+j \ge t+1$ as otherwise $v \cup L_1 \cup L_2$ make a $k$-vertex subset with $|L_2|+s \le t$ edges in total. Note that since we observed by the claim that both $v$ and $u$ need to have a neighbour inside $L \setminus \{u,v\}$ we must have $j,s \ge 1$, while by our choice of $v$ as having fewer neighbours in $L$ than $u$ we have $j\le s$.

We distinguish two cases depending on whether $v \sim u$ or not (i.e.\ whether $u \in L_1$ or $u \in L_2$). Let us deal with the case $u \in L_2$ first. There can be no edges within $L_2=N(v)$ so all leaves of $S$ must be in $L_1$. $u$ must have at least $t+1$ neighbours within $R_1 \cup R_2$ (otherwise $u \cup R_1 \cup R_2$ are $k$ vertices with at most $t$ edges) so there must be at least $s(t+1)$ edges missing between $R_1 \cup R_2$ and $L_1$, i.e. $X \ge s(t+1)$. In particular, $e(G) \le (k-1)^2+1-(i-1)(j-1)+s-s(t+1)\le (k-1)^2+1-t(j-1)-ts\le (k-1)^2+1-t^2,$ (where we used $j \ge 1$ and $i \ge t+1$ in the second inequality and $j-1+s\ge t$ in the third), as desired.

In the remaining case $u \in L_1.$ Let's say $s_1$ leaves of $S$ are in $L_1$ and $s_2$ in $L_2$. Let $x\ge t+1$ be the number of neighbours of $u$ in $R_1 \cup R_2$ we know as before there must be $xs_1+(k-1-x)$ non-edges between $L_1$ and $R_1 \cup R_2$ and at least $(x-|R_2|)s_2=(x-i)s_2$ non-edges between $R_1$ and $L_2$. In total we have $X \ge xs_1+(k-1-x) +\max(x-i,0)\cdot s_2=k-2+s+(x-1)(s-1)-\min (x,i)\cdot s_2.$ If we denote by $m:=\min(x,i)$ we get
\begin{align*}
    e(G)& \le (k-1)^2+1-(k-2)-(i-1)(j-1)-(x-1)(s-1)+m\cdot s_2\\
    & \le (k-1)^2+1-(k-2)-(m-1)(s+j-2)+m\cdot s_2\\
    & \le (k-1)^2+1-(k-2)-(m-1)(s+s_2-2)+m\cdot s_2\\
    & = (k-1)^2+1-(k-4)-(m-2)(s-2)-s_1
\end{align*}
Where we used $i,x \ge m$ in the second inequality and $j \ge s_2$ (since $s_2$ leaves live inside $L_2$ of size $j$) in the third. The term $(m-2)(s-2)+s_1$ is non-negative provided $s \ge 2$, since $m=\min(x,i)\ge t+1\ge 2.$ So if $s \ge 2,$ we have $e \le (k-1)^2+1-(k-4) \le (k-1)^2+1-t^2,$ where we used $t < \sqrt{(k-1)/2} \implies t^2 \le k-4$, which holds for $k \ge 5$ (using integrality of $t$ for $k=5,6$). If $s=1$ we must also have $j=1$ (since $j\le s$ and $j\ge 1$) and in turn $j+s\ge t+1$ implies $t=1$. If $s_2=0$ the first inequality above (and $k \ge 3$) gives $e(G) \le (k-1)^2=(k-1)^2-t+1$ and we are done. If $s_2=|L_2|=1,$ then $s_1=0$ and removing the single vertex in $L_2$ removes all neighbours of $v,u$ in $L$ and gives us again the situation from the claim. 
\end{proof}

We are now ready to deduce our bound on $M(2k-1,3)$.
\begin{thm}\label{thm:ub-odd}
Let $k \ge 4,$ then $M(2k-1,3) \ge \frac{k+1}2-\max\limits_{1\le t\le k-2}\min\left(\frac{t}{k-2},\frac{(k+1)/2-(t-1)^2}{2k-3}\right).$
\end{thm}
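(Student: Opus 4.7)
The plan is to apply the just-proved \Cref{lem:up-bip} to the triangle-free complement $\bar G$, and then combine the resulting edge count of $G$ with a direct bound coming from looking at a single $k$-vertex subgraph of $G$. Throughout, write $A(t):=t/(k-2)$ and $B(t):=((k+1)/2-(t-1)^2)/(2k-3)$. Let $G$ be any graph on $2k-1$ vertices with $\alpha(G)\le 2$; then $\bar G$ is triangle-free. Let $s^*$ be the maximum number of edges in a $k$-vertex induced subgraph of $G$, and set $\tau:=\binom{k}{2}-s^*$.

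Two bounds are immediately available. First, choosing a $k$-vertex subgraph realising $s^*$ yields the \emph{edge bound}
\[
m_2(G)\ge \tfrac{s^*-1}{k-2}= \tfrac{k+1}{2}-A(\tau).
\]
Second, every $k$-subset of $\bar G$ spans at least $\binom{k}{2}-s^*=\tau$ edges; so whenever $\tau\ge 2$ and $\tau-1<\sqrt{(k-1)/2}$ we may apply \Cref{lem:up-bip} to $\bar G$ with its parameter set to $\tau-1$. The conclusion $e(\bar G)\le (k-1)^2-(\tau-1)^2+1$ translates via $e(G)=\binom{2k-1}{2}-e(\bar G)\ge k(k-1)+(\tau-1)^2-1$ and division by $2k-3$ into the \emph{complement bound}
\[
m_2(G)\ge \tfrac{k+1}{2}-B(\tau).
\]
The key point is that the two bounds are available at the \emph{same} value of $t$, namely $t=\tau$; combining them gives $m_2(G)\ge (k+1)/2-\min(A(\tau),B(\tau))$, and since $\min(A(\tau),B(\tau))\le \max_t\min(A(t),B(t))$ this already implies the theorem.

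The remaining work is to dispose of the values of $\tau$ outside the range in which \Cref{lem:up-bip} is directly applicable. When $\tau=0$ we have $G\supseteq K_k$ and $m_2(G)\ge m_2(K_k)=(k+1)/2$, which is larger than the RHS. When $\tau=1$ only the edge bound is available, but an elementary computation (equivalent to $(k-1)(k-4)\ge 0$) shows $A(1)\le B(1)$ for all $k\ge 4$, so $\min(A(1),B(1))=A(1)$ and the edge bound at $t=1$ already suffices. When $\tau-1\ge \sqrt{(k-1)/2}$ (the ``small $s^*$'' regime) the edge bound is weak but $\bar G$ is very dense, so we set $t:=\lfloor\sqrt{(k-1)/2}\rfloor+1$, the largest admissible value; every $k$-subset of $\bar G$ still has at least $\tau\ge t$ edges, so the complement bound gives $m_2(G)\ge (k+1)/2-B(t)$. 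For this boundary $t$ one has $(t-1)^2\approx (k-1)/2$, so $B(t)=\Theta(1/k)$ while $A(t)=\Theta(1/\sqrt{k})$, giving $B(t)\le A(t)$ and hence $\min(A(t),B(t))=B(t)$, so the complement bound alone is enough.

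The main obstacle is conceptual rather than computational: the theorem's right-hand side involves a $\min$ inside a $\max$, which is strictly stronger than what one would obtain by proving, for each $t$, an either/or dichotomy and then taking the best $t$ (that would only yield a $\min$-outside-$\max$ bound). One has to exploit that for the single value $t=\tau$ dictated by $G$, both the edge bound \emph{and} the complement bound hold simultaneously. The other minor point is the boundary verification $B(t)\le A(t)$ at $t=\lfloor\sqrt{(k-1)/2}\rfloor+1$ in the small $s^*$ regime, and a trivial separate treatment of $k=4$ (where \Cref{lem:up-bip} does not apply): on $7$ vertices $\alpha(G)\le 2$ forces a triangle by $R(3,3)=6$, so $m_2(G)\ge 2$, matching the theorem.
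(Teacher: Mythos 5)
Your proof uses the same key lemma (\Cref{lem:up-bip} applied to the triangle-free complement $\bar G$) and the same dichotomy (edge bound from a best $k$-subset vs.\ edge count of $\bar G$), so at its core this is the paper's argument. However, the bookkeeping is genuinely cleaner: the paper argues abstractly with the maximiser $a$ of $m(t)=\min(A(t),B(t))$ and splits into two cases according to whether $m(a)=A(a)$ or $m(a)=B(a)$, each time deriving the conclusion $\ge (k+1)/2-m(a)$ by an either/or dichotomy together with a maximality-of-$a$ comparison such as $B(a+1)\le m(a)$. Your reformulation fixes the single, graph-dependent value $t=\tau$ and observes that both the edge bound and the complement bound are valid simultaneously at $t=\tau$; since any single $t$ in range gives $\min(A(\tau),B(\tau))\le\max_t\min(A(t),B(t))$, the $\min$-inside-$\max$ shape falls out without any reference to the maximiser. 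This removes the more delicate comparisons in the paper's Case~2 entirely. Your $k=4$ treatment (force a triangle via $R(3,3)=6$, hence $m_2(G)\ge m_2(K_3)=2$) is also simpler than the paper's appeal to \Cref{lem:equivalence}, and gives the same value $2$.

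One small technical slip: in the ``small $s^*$'' regime you set $t=\lfloor\sqrt{(k-1)/2}\rfloor+1$ and apply \Cref{lem:up-bip} with parameter $t-1=\lfloor\sqrt{(k-1)/2}\rfloor$. When $(k-1)/2$ is a perfect square (e.g.\ $k=9,19,33,\ldots$) this gives $t-1=\sqrt{(k-1)/2}$, violating the strict inequality $t-1<\sqrt{(k-1)/2}$ required by the lemma. The correct choice is $t=\lceil\sqrt{(k-1)/2}\,\rceil$, which coincides with your $t$ when $\sqrt{(k-1)/2}$ is irrational and drops by $1$ otherwise; the same monotonicity argument ($A$ increasing, $B$ decreasing, and $B<A$ at $t=\sqrt{(k-1)/2}$) still gives $B(t)\le A(t)$, and one checks $1\le t-1$ and $t\le k-2$ for $k\ge 5$. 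With this correction the proof is complete.
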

\begin{proof}
Let $G$ be a graph with $2k-1$ vertices and $\alpha(G) \le 2$. If $k=4$ the desired bound evaluates to $2$. Since in this case $G$ does not satisfy the \property{7}{3} by \Cref{lem:equivalence} it contains either a $K_4$ or an $H_7$ as a subgraph. Since $m_2(K_4)=5/2>2$ and $m_2(H_7) \ge \frac{e(H_7)-1}{|H_7|-2}=2$ (since $e(H_7)=11$ and $|H_7|=7$) we deduce that in either case $m_2(G) \ge 2$ as desired. Let us now assume $k \ge 5$.

Let $m(t)=\min\left(\frac{t}{k-2},\frac{(k+1)/2-(t-1)^2}{2k-3}\right)$. Observe that if we choose $t= \sqrt{(k-1)/2},$ we obtain $\frac{(k+1)/2-(t-1)^2}{2k-3}=\frac{t^2+1-(t-1)^2}{2k-3}=\frac{2t}{2k-3} < \frac{t}{k-2}.$ Let $a$ be the maximiser of $m(t).$  

If $m(a)=\frac{a}{k-2},$ then by the above observation we must have $a < \sqrt{(k-1)/2}$ (since the first term of the minimum is increasing and the second is decreasing in $t$). We may assume that among any $k$ vertices there are at least $a+1$ missing edges, as otherwise the induced subgraph on these $k$ vertices implies $m_2(G)\ge (k+1)/2-a/(k-2)=(k+1)/2-m(a)$ and we are done. This allows us to apply \Cref{lem:up-bip} to the complement of $G$ to deduce $G$ must have at least $\binom{2k-1}{2}-((k-1)^2+1-a^2)=k(k-1)+a^2-1$ edges. This in turn implies $m_2(G) \ge \frac{k(k-1)+a^2-2}{2k-3}=(k+1)/2-\frac{(k+1)/2-a^2}{2k-3} \ge (k+1)/2-m(a),$ where the last inequality follows since otherwise $m(a) < \frac{(k+1)/2-a^2}{2k-3}$ which implies $m(a)<m(a+1)$ (since also $\frac{a+1}{k-2} > \frac{a}{k-2}=m(a)$), which contradicts maximality of $m(a)$ (note that $a+1 <\sqrt{(k-1)/2}+1\le k-2$ for $k \ge 5$).

If on the other hand $m(a)=\frac{(k+1)/2-(a-1)^2}{2k-3}$ we may assume $a \ge 2$ (since $\frac{1}{k-2} < \frac{(k+1)/2}{2k-3}$ for $k \ge 5$). So we must have $m(a) \ge \frac{a-1}{k-2}$ (otherwise since the second term in the definition of $m(t)$ is decreasing in $t$ we conclude $m(a-1)> m(a)$ and get a contradiction). As in the previous case this means that any $k$ vertices must miss at least $a$ edges, or we are done. We also know that $m(a-1)=(a-1)/(k-2)$ (as otherwise again $m(a-1)>m(a)$) so again as in the previous case by our initial observation we must have $a-1<\sqrt{(k-1)/2}$ and we may apply \Cref{lem:up-bip} with $t=a-1$ to complement of $G$ to obtain $e(G)\ge k(k-1)+(a-1)^2-1$. This  implies 
\begin{equation} \label{ugly}
m_2(G) \ge \frac{k(k-1)+(a-1)^2-2}{2k-3}=\frac{k+1}2-\frac{(k+1)/2-(a-1)^2}{2k-3} = \frac{k+1}2-m(a),
\end{equation}
as desired.
\end{proof}

We now prove the lemma which we used for the $k=4$ case in the above theorem and mentioned in \Cref{sec:lower bounds}.
\begin{lem}\label{lem:equivalence}
Any $K_4$ and $H_7$-free graph $G$ satisfies $\alpha_7(G) \ge 3$.
\end{lem}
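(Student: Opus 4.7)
The plan is to prove the contrapositive: take any 7-vertex subset $S \subseteq V(G)$, set $G' := G[S]$, and suppose $\alpha(G') \le 2$. Since $G'$ inherits $K_4$-freeness from $G$, it will suffice to show $G' \supseteq H_7$, contradicting the $H_7$-freeness of $G$.

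First I would establish that $\Delta(G') \in \{4,5\}$. For the upper bound, for any vertex $v$ the induced graph $G'[N(v)]$ is triangle-free (else a triangle together with $v$ forms a $K_4$) and has $\alpha \le 2$, so Ramsey's theorem $R(3,3)=6$ forces $|N(v)| \le 5$. For the lower bound, the complement $\bar{G'}$ is triangle-free, so any vertex of $\bar{G'}$-degree at least $4$ would have an independent neighborhood of size $\ge 4$ in $\bar{G'}$, giving $\omega(G') \ge 4$, a contradiction; an averaging argument then shows $e(G') \ge 11$ and thus $\Delta(G') \ge 4$.

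In the case $\Delta(G') = 5$, fix $v$ of degree $5$ and let $u$ be its unique non-neighbor. Then $G'[N(v)]$ is the only 5-vertex graph with $\alpha = \omega = 2$, namely $C_5$, which I write as $a\text{-}b\text{-}c\text{-}d\text{-}e\text{-}a$. Since $u \not\sim v$ we have $N(u) \subseteq N(v)$, and $G'[N(u)]$ is an induced subgraph of this $C_5$ with $\alpha \le 2$; together with the domination condition forced by $\alpha(G')\le 2$, $N(u)$ must be one of $P_3$, $P_4$, or $C_5$ (the $P_2 + P_1$ configuration is ruled out since $N(u) = \{a,b,d\}$ would produce the independent triple $\{u, c, e\}$). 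In each subcase I would pick $v_1 \in N(u) \cap N(v)$ such that $N_{G'}(v_1)$ contains two disjoint edges, which become the parts $V_2$ and $V_5$ of $H_7$; the two remaining vertices of $N(v)$ then become $V_3$ and $V_4$, and a direct check confirms all 11 edges of $H_7$ are present. For instance, in the $P_3$ subcase with $N(u) = \{a,b,c\}$, taking $v_1 = a$ gives $N(a) = \{v,b,e,u\}$ containing the matching $\{ev, bu\}$, and $V_2 = \{e,v\}$, $V_5 = \{b,u\}$, $V_3 = \{d\}$, $V_4 = \{c\}$ completes the embedding.

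In the case $\Delta(G') = 4$ the strategy is analogous but the bookkeeping is more involved. If $v$ has degree $4$ with non-neighbors $u_1, u_2$, then $u_1 u_2 \in E(G')$ (else $\{v, u_1, u_2\}$ is independent), $G'[N(v)]$ is one of the three 4-vertex graphs with $\alpha = \omega = 2$ (namely $C_4$, $P_4$, or $2K_2$), and each $u_i$ must be adjacent to a vertex cover of the non-edges of $G'[N(v)]$ to avoid further independent triples. Within each resulting subcase the same "two disjoint edges in a chosen neighborhood" strategy produces an $H_7$-embedding. The hardest part is precisely this $\Delta = 4$ case analysis, which branches into several routine but tedious sub-subcases depending on $G'[N(v)]$ and on $N(u_1) \cup N(u_2)$.
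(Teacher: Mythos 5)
Your high-level strategy is sound and the $\Delta=5$ case is handled correctly, but you explicitly leave the $\Delta=4$ case as a sketch (``the hardest part\dots branches into several routine but tedious sub-subcases''). That case is not vacuous --- for instance $\overline{C_7}$ is a $4$-regular, $K_4$-free $7$-vertex graph with $\alpha=2$, and it splits further according to whether $G'[N(v)]\cong C_4$, $P_4$, or $2K_2$ and on where $N(u_1)\cup N(u_2)$ lands --- so the proposal as written contains a genuine gap.

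What is worth comparing is that the paper sidesteps this case explosion entirely with a different decomposition. After establishing $3\le\delta\le\Delta\le 5$ (using essentially the same $R(3,3)=6$ observation as you), the paper notes that $7$ is odd so some vertex has degree exactly $4$, and then, rather than analyzing the neighborhood structure vertex by vertex, proves that $G'$ always contains two \emph{vertex-disjoint triangles}. Once you have disjoint triangles $xyz$ and $abc$, the seventh vertex $w$ sends at most two edges to each triangle (else a $K_4$), and the minimum degree forces it to send either $2+2$ or $2+1$. In the $2+2$ case the two non-neighbours of $w$ (one per triangle) must be adjacent to avoid an independent triple, and this edge together with the two triangles and $w$ is exactly the blow-up structure of $H_7$. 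In the $2+1$ case a single swap (replace $w$ by the missed vertex $z$ of the first triangle) converts it to the $2+2$ case. This collapses all of your sub-subcases into one short argument. If you want to salvage your route, you would need to actually carry out the $\Delta=4$ analysis; if you want a shorter proof, try to first locate two disjoint triangles and only then reason about the last vertex.
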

\begin{proof}
It is enough to show that any $7$ vertex graph $G$ which is $K_4$-free and has no $I_3$ (independent set of size $3$) must contain $H_7$. First observe that $\delta(G)\ge 3$ as otherwise there is a vertex with $4$ non-neighbours who must span a $K_4$, in order to avoid $I_3$. Note also that $\Delta(G) \le 5$ as if a vertex $v$ had degree $6,$ then by $R(3,3)=6$ in its neighbourhood we find an $I_3$ or a $K_3$ which together with $v$ makes a $K_4$. Since $G$ has odd size it must contain a vertex $v$ of degree exactly $4$. 

Our goal is to find two vertex disjoint triangles in $G$. If some vertex $v$ has degree $3,$ then its non-neighbours span a triangle and since its neighbours don't span an $I_3$ the edge among them together with $v$ give us our second triangle. If all vertices have degree at least $4,$ then by pigeonhole principle any two adjacent vertices lie in a triangle. If we take $v$ as our guaranteed vertex of degree $4,$ let $u,w$ be its non-neighbours. Then $u \sim w$ and $u$ and $w$ lie in some triangle. Removing it leaves us with $v$ and $3$ of its neighbours, so we again find a second triangle.

So we can always find a pair of vertex disjoint triangles $xyz$ and $abc$. Let $v$ be the remaining vertex of $G$. $v$ can send at most $2$ edges towards each of the triangles. If it sends exactly $2$ to both, then the non-neighbours of $v$ must be adjacent and we found our $H_7$. If it sends $2$ to $xyz$, say $v \sim x,y$ of them but only one to $abc$ say $v \sim a,$ then replacing $v$ with $z$ we find disjoint triangles $xyv$ and $abc$ such that $z$ sends $2$ edges towards each ($v \nsim z,b,c$ implies $z \sim b,c$) so we are back in the first case and are done. 
\end{proof}

Combining \Cref{thm:m2k3-lb} and \Cref{thm:ub-odd} with \Cref{prop:m2-loc-ind} we obtain \Cref{thm:main-ub-m-3}. Both above results are tight. Since we only needed the above lower bounds for \Cref{thm:main-ub-m-3} we will only describe our tightness examples here and postpone (the somewhat tedious) computation of their $2$-density to \Cref{appendix}. 

If $m=2k,$ then our example is simply a vertex disjoint union of $2$ cliques on $k$ vertices. This graph has no independent set of size $3$ and it is not hard to see that its $2$-density is equal to $m_2(K_k)=\frac{k+1}{2}$ (see \Cref{thm:ub-even}). If $m=2k-1$ the answer is more complicated since it needs to match the somewhat messy bound of \Cref{thm:ub-odd}. The examples however still arise naturally from looking at the proof and will be blow-ups of $C_5$ with cliques placed into parts which we choose to have sizes $1,a,k-1-a,k-1-a,a$ in order around the cycle, where $a$ is the optimal choice of $t$ in \Cref{thm:ub-odd}. The complement of any such graph is an actual blow-up of $C_5$ so is triangle-free and for the computation of its $2$-density see \Cref{lem:ub-odd-tight}.

\subsubsection{General Tur\'an 2-density problem}\label{sec:general-ub-2density}
In this section, we show our general bounds on $M(m,r)$. Combining the following proposition with \Cref{prop:m2-loc-ind} we obtain \Cref{general-ub}.  

\begin{prop}\label{prop-general-ub}
Let $k=\ceil{m/(r-1)}.$ Provided $m$ is sufficiently larger than $r$ we have $M(m,r) \ge \frac{k+1}{2}-\frac{c_r}{\sqrt{k}},$ where $c_r>0$ is a constant depending only on $r$.
\end{prop}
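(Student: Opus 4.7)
Plan: The plan is to reduce the general case to the $r = 3$ result (\Cref{thm:m2k3-lb} and \Cref{thm:ub-odd}) by restricting $H$ to the union $K \cup N(K)$ for a carefully chosen clique $K$ of size $r-3$. Given an $m$-vertex graph $H$ with $\alpha(H) \le r-1$, I would first construct $K = \{v_1, \ldots, v_{r-3}\} \subseteq V(H)$ via an iterated Tur\'an-type argument: by \Cref{lem:turan-lb} applied to $H$ (whose complement is $K_r$-free, so $H$ has average degree at least $m/(r-1) - 1$), choose $v_1$ of near-maximum degree, and iteratively pick $v_{i+1}$ of near-maximum degree in the subgraph $H[N(v_1) \cap \cdots \cap N(v_i)]$, whose independence number drops by one at each step. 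The common neighborhood $N(K)$ then satisfies $\alpha(H[N(K)]) \le 2$.

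Next, I would consider the induced subgraph $H[K \cup N(K)]$, which is the join $K_{r-3} \vee H[N(K)]$ and also has independence number at most $2$. By the $r = 3$ bounds applied to $H[K \cup N(K)]$, there exists $H^* \subseteq H[K \cup N(K)]$ with $d_2(H^*) \ge (\lceil |H^*|/2 \rceil + 1)/2 - O(1/\sqrt{|H^*|})$. Provided $|K \cup N(K)| \ge 2k$, this yields $d_2(H^*) \ge (k+1)/2 - c_r/\sqrt{k}$, and hence $m_2(H) \ge d_2(H^*)$, completing the proof. A minor arithmetic check, using the identity $\binom{r-3+n}{2} = \binom{r-3}{2} + (r-3)n + \binom{n}{2}$, shows that the join with $K_{r-3}$ exactly accounts for the needed shift in $d_2$ when $H[N(K)]$ is itself close to a clique.

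The main obstacle is that $|K \cup N(K)|$ need not reach $2k$. In the extremal example of a vertex-disjoint union of $(r-1)$ cliques $K_k$, any $K_{r-3}$ lies inside a single cluster, so $|K \cup N(K)| = k$; yet in this case $H[N(K)]$ is itself a clique $K_{k-r+3}$ and $H[K \cup N(K)] = K_k$ has $d_2 = (k+1)/2$ directly. The proof therefore must dichotomize: either $|N(K)|$ is large enough for the $r = 3$ bound (combined with taking the join with $K$) to yield the target, or $H[N(K)]$ is forced to be close to a clique by the interaction of $\alpha(H[N(K)]) \le 2$ with the small size of $N(K)$, so that $H[K \cup N(K)]$ is a near-$K_k$ whose $2$-density is close to $(k+1)/2$. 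Interpolating between these regimes and quantifying the loss in each yields the $c_r/\sqrt{k}$ slack in the final bound. The principal technical challenge is controlling the intermediate regime, where $|N(K)|$ is moderate and $H[N(K)]$ is neither very dense nor very large; handling it will likely require a stability-type analysis of $\bar H[N(K)]$ (triangle-free on $|N(K)|$ vertices) in the spirit of \Cref{lem:up-bip} from the $r = 3$ argument.
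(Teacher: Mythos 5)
Your construction of the clique $K$ rests on the claim that restricting to $N(v_1)\cap\cdots\cap N(v_i)$ reduces the independence number by one at each step, so that $\alpha(H[N(K)])\le 2$. This is false: if $H$ has $\alpha(H)\le r-1$ and $v\in H$, there is no reason for $\alpha(H[N(v)])$ to be at most $r-2$. Already for $r=3$, take $H=K_4$ minus an edge $cd$: then $\alpha(H)=2$, the vertex $a$ has maximum degree, yet $N(a)=\{b,c,d\}$ contains the non-edge $cd$, so $\alpha(H[N(a)])=2$, not $1$. (The drop-by-one phenomenon holds for common \emph{non}-neighborhoods of an \emph{independent} set $I$, since any independent $J$ in $\overline{N}(I)$ extends $I$; but then there are no edges between $I$ and $\overline{N}(I)$, so you cannot form the join $K_{r-3}\vee H[\overline{N}(I)]$ that your arithmetic relies on, and $\binom{r-3}{2}+(r-3)n$ extra edges are simply not there.) Since $\alpha(H[K\cup N(K)])\le 2$ is exactly what you need to invoke the $r=3$ machinery, the reduction collapses at its first step.

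The paper takes an entirely different and much more direct route: work in the complement $\overline{G}$, which is $K_r$-free, and set $t=t_{r-1}(m)-e(\overline{G})$. If $t\ge\frac{3}{2}m$ then $e(G)$ alone already forces $d_2(G)\ge\frac{k+1}{2}$. Otherwise $t<\frac{3}{2}m$, and a quantitative stability theorem for Tur\'an's theorem (Balogh--Clemen--Lavrov--Lidick\'y--Pfender) shows $\overline{G}$ becomes $(r-1)$-partite after deleting $O(rt^{3/2}/m)\le r\sqrt m$ edges, i.e.\ $G$ is a disjoint union of $r-1$ cliques missing at most $r\sqrt m$ edges. Some part then has size at least $k$, and $k$ of its vertices span a near-complete graph with $d_2\ge\frac{k+1}{2}-\frac{r\sqrt m}{k-2}=\frac{k+1}{2}-O_r(1/\sqrt k)$. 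The "main obstacle" and "interpolation" you defer to a stability-type analysis of $\overline{H}[N(K)]$ is, in the paper, the \emph{entire} argument, applied globally to $\overline{G}$ rather than locally inside a common neighborhood; there is no reduction to the $r=3$ case at all.
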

\begin{proof}
Let $G$ be an $m$-vertex graph with $\alpha(G)\le r-1,$ our task is to show $m_2(G)\ge \frac{k+1}{2}-\frac{c_r}{\sqrt{k}},$ where $k= \ceil{\frac{m}{r-1}}$, under the assumption that $m$ is large. $\alpha(G)\le r-1$ implies that $G$'s complement $\overline{G}$ is $K_r$-free. 
Let $t=t_{r-1}(m)-e(\overline{G}),$ where $t_{r-1}(m)$ denotes the Tur\'an number for $K_r$-free graphs on $m$ vertices. If $t \ge \frac32m,$ then we get $$m_2(G) \ge \frac{e(G)-1}{|G|-2}=\frac{\binom{m}{2}-t_{r-1}(m)+t}{m-2}\ge  \frac{\frac{m^2}{2(r-1)}+t-m/2}{m}\ge \frac{m}{2(r-1)}+1\ge \frac12 \cdot (k-1)+1 = \frac{k+1}{2},$$
where in the second inequality we used the standard bound $t_{r-1}(m) \le \left(1-\frac1{r-1}\right)\frac{m^2}2$.

So we are done unless $t<\frac32 m$. Since $\overline{G}$ is $K_r$-free and has $t_{r-1}(m)-t$ edges a stability theorem (see Theorem 1.3 in \cite{balogh2019making}) implies that $\overline{G}$ can be made $r-1$-partite by removing at most $\frac{rt^{3/2}}{2m}$ edges (being crude and using that $m$ is sufficiently larger than $r$). Translating this to $G$ we conclude $G$ is a vertex disjoint union of $r-1$ cliques missing a few edges, in total at most $\frac{rt^{3/2}}{2m} \le r\sqrt{m}$ edges. At least one of these ``cliques'' needs to have size $s \ge k$. In particular, if we take a subset of $k$ vertices of this ``clique,'' it still misses at most $r\sqrt{m}$ edges. In particular, it has $2$-density at least $\frac{k+1}{2}-\frac{r\sqrt{m}}{k-2}\ge \frac{k+1}{2}-\frac{c_r}{\sqrt{k}}$.
\end{proof}

\textbf{Remark.} The stability result we used above was also independently discovered in \cite{R-S} (in an asymptotic form), we used the variant from \cite{balogh2019making} since it is explicit. Our problem seems to be closely related to this type of stability problems for Tur\'an's theorem. For example, the bipartite variant, which was precisely solved in \cite{erdos-bip}, has the same form of optimal examples as we found for $M(2k-1,3)$. This was recently generalised to $r$-partite graphs in \cite{dano} which might be helpful for studying $M(m,r)$ for larger $r$.

Note that for the special case of $r=3$ and $m$ odd this result matches (up to a constant factor in front of the lower order term) our bound in \Cref{thm:ub-odd} and is hence almost best possible in this case by \Cref{lem:ub-odd-tight}. On the other hand, if $m$ is even it is some way off. This seems to happen in general, we found examples (disjoint unions of our examples for the $r=3$ case) which show that the above bound is tight up to the constant factor in front of the lower order term provided $m \pmod {r-1}$ is between $1$ and $(r-1)/2$. This condition ensures that in the Tur\'an $K_r$-free graph on $m$ vertices there are more small parts (of size $k-1$) which allows us to pair up small and big parts and place there a copy of our example from the $r=3$ case, we once again relegate the details to \Cref{appendix}. It seems that as $m \pmod {r-1}$ approaches $r$ stronger bounds should hold and ultimately if $m \mid (r-1),$ the lower order term disappears completely as it did in the $r=3$ case.

\Cref{general-ub}, while being close to best possible, unfortunately requires $m$ to be somewhat large (compared to $r$) which misses many interesting instances of the problem. The following result illustrates some of our ideas for obtaining results which hold for any choice of parameters. We restrict attention to the divisible case $m=k(r-1)$ to keep the argument as simple as possible.

\begin{prop}\label{prop:non-asymptotic-ub}
Let $r \ge 3$ and $m=k(r-1),$ then we have $M(m,r) \ge \frac{k}{2}+\frac{k-1}{m-2}$.
\end{prop}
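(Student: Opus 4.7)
The plan is to argue by induction on $r$, keeping $k$ as a free parameter. The base case $r=3$ is precisely \Cref{thm:m2k3-lb}: it gives $M(2k,3) \ge (k+1)/2$, which matches the target since a direct computation yields $k/2 + (k-1)/(2k-2) = (k+1)/2$. So for the inductive step one can assume $r \ge 4$ and that the statement holds for $r-1$ (with the same $k$).

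Given such $r$, let $G$ be any graph on $m = k(r-1)$ vertices with $\alpha(G) \le r-1$. The natural split is according to whether $G$ has a low-degree vertex. In the \emph{low-degree case}, suppose some $v \in V(G)$ has $d(v) \le k-1$. Then $|\overline{N}(v)| \ge m - k = k(r-2)$, so we may pick $S \subseteq \overline{N}(v)$ with $|S| = k(r-2)$. Since $v$ is non-adjacent to every vertex of $S$, any independent set in $G[S]$ extends by $v$ to an independent set in $G$, so $\alpha(G[S]) \le r-2$. Applying the inductive hypothesis to $G[S]$ gives
\[
m_2(G) \;\ge\; m_2(G[S]) \;\ge\; \frac{k}{2} + \frac{k-1}{k(r-2)-2} \;>\; \frac{k}{2} + \frac{k-1}{m-2},
\]
the final strict inequality holding because $k(r-2)-2 < k(r-1)-2 = m-2$.

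In the complementary \emph{high-degree case} we have $\delta(G) \ge k$, so $e(G) \ge km/2$, and therefore
\[
m_2(G) \;\ge\; d_2(G) \;=\; \frac{e(G)-1}{m-2} \;\ge\; \frac{km/2 - 1}{m-2} \;=\; \frac{k}{2} + \frac{k-1}{m-2},
\]
where the last identity is a one-line algebraic check ($k(m-2)/2 + (k-1) = km/2 - 1$).

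The only delicate point is the exact matching of constants: the degree threshold $k-1$ for recursing is forced to be precisely this value from two sides. If it were larger, the edge count in the high-degree case would not quite reach $km/2$ and would miss the target density; if it were smaller, the non-neighbourhood $\overline{N}(v)$ in the low-degree case would not have the full $k(r-2)$ vertices needed to feed cleanly into the induction with the same $k$. This tight bookkeeping is why the argument is entirely self-contained and does not need any stability input, in contrast to \Cref{prop-general-ub}.
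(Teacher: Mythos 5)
Your proof is correct and follows essentially the same route as the paper: induction on $r$ with $k$ fixed, base case from \Cref{thm:m2k3-lb}, a dichotomy on $\delta(G) \ge k$ versus a vertex of degree at most $k-1$, with the low-degree case fed into the inductive hypothesis via the non-neighbourhood and the high-degree case handled by a direct $d_2$ computation.
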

\begin{proof}
We will prove the result by induction on $r$ while keeping $k$ fixed.
For the base case of $r=3$, we have $k=m/2$ and the statement matches precisely \Cref{thm:m2k3-lb}. 

Let $G$ be a graph on $m$ vertices having $\alpha(G) \le r-1$. If $G$ has a vertex $v$ of degree less than $k,$ then removing $v \cup N(v)$ from $G$ we obtain a graph $G'$ on at least $m-k=k(r-2)$ vertices which has $\alpha(G') \le r-2$, since $v$ extends any independent set we can find in $G'$. This implies by the inductive assumption for $r-1$ that $m_2(G) \ge m_2(G') \ge M(m-k,r-1) \ge \frac{k}{2}+\frac{k-1}{m-k-2}\ge \frac{k}{2}+\frac{k-1}{m-2}$ with room to spare. Hence, we may assume $\delta(G) \ge k$ which implies $m_2(G) \ge d_2(G) \ge \frac{mk/2-1}{m-2}=\frac{k}{2}+\frac{k-1}{m-2}$.
\end{proof}

The above proof clearly leaves quite some room for improvement. However, it (combined with $M(m,r)$ being increasing in $m$ to capture the non-divisible cases) already suffices to improve the bound of Linial and Rabinovich for all values of $m$ and $r$ with $k \ge 4$. It also suffices to obtain a significant improvement in the benchmark case $m=20,r=5$ of $M(20,5)\ge 49/18$ over the previously best bound of $43/18$ of Kostochka and Yancey \cite{kost}. We have more involved ideas which allow one to improve on the above bound quite substantially. In particular, we manage to resolve the benchmark case and show $M(20,5)=3$. Since that argument is somewhat more involved and its generalisations become even more complicated, while ultimately still falling short of the asymptotic result of \Cref{general-ub}, we relegate it to \Cref{appendixB}.

\vspace{-0.5cm}
\section{Concluding remarks and open problems}\label{sec:conc-remarks}
\vspace{-0.3cm}
In this paper we study the local to global independence number problem, i.e.\ how big an independent set one finds in a graph with the property that any $m$ vertices contain an independent set of size $r$. While many of our results break previous barriers on this problem, there is still room for improvement and we believe we have not fully exhausted the potential of our ideas.

In terms of lower bounds, we improve previously best bounds for about half of the possible choices of $m$ and $r$. It would be interesting to obtain a similar improvement for the whole, or at least most of the range. Our argument here relied on improving the bounds for $r=3$ which is then generalised through \Cref{lem:reduction}. One can follow our approach for $r \ge 4$ as well. \Cref{lem:reduction} easily generalises so for example if one improves bounds say for $m=3k-1, r=4,$ this leads to improvement for about $2/3$ of the possible values in general. For $r=3$ our arguments relied on a Ramsey result for graphs $H_{2k-1}$ which were certain blow-ups of $C_5$.  We believe a similar story should happen for larger $r$, in the initial cases role of $C_5$ could be taken by the chain graphs (see graphs $H_k$ in \cite{jones}) and to obtain a general result for fixed $r$ one should prove a Ramsey bound for appropriate blow-ups of these chain graphs. This should lead to an improvement for essentially all values of $m$ and $r$ except when $r-1 \mid m$ which seems more difficult. In fact, using a minor modification of \Cref{lem:reduction} if one improves the bounds in such a ``divisible'' case, say $m=2k$ and $r=3,$ this immediately improves the bounds for any choice of $m$ and $r$ with $r-1 \nmid m$ (and most divisible cases as well). Here a good starting point seems to be the case $m=8, r=3$.
\begin{qn}
Does any graph with an independent set of size $3$ among any $8$ vertices have $\alpha(G)\ge n^{1/3+\eps}$?
\end{qn}
The reason we raise the $(8,3)$ case instead of $(6,3)$ is that the latter is easily seen to be essentially equivalent to the problem of how large independent sets we find in triangle-free graphs. Since the answer to this classical problem is known up to a constant factor \cite{shearer, bohman-keevash,morris} the same holds for our problem.  
This raises the possibility that the $(8,3)$ case is essentially equivalent to the same problem for $K_4$-free graphs which is open and believed hard. It turns out however that this is not the case since for example the square of $C_8$ is an $8$ vertex $K_4$-free graph with no independent set of size $3$ and could play the role of our $H_{2k-1}$'s as the intermediate forbidden graph in this case. In fact, it is the only possible candidate, as can be seen by looking at optimal examples for $R(4,3)$ \cite{ramsey-examples} which show that property $\alpha_8 \ge 3$ is essentially equivalent to the graph being $K_4$-free and $C_8^2$-free.

In terms of improving our new bounds a good starting place are graphs in which every $7$ vertices have an independent set of size $3$. We showed such graphs must have $\alpha(G)\ge n^{5/12-o(1)}$ proving a conjecture of Erd\H{o}s and Hajnal. Here the natural limit for our methods is actually $n^{3/7}$ and most of our argument works up to this point. It should be possible to push our methods at least beyond $5/12$. On the other hand, breaking $3/7$ seems to require new ideas. The main question here is whether it is possible to reach $1/2$, namely whether the second conjecture of Erd\H{o}s and Hajnal holds.

\begin{qn}
Does any graph with an independent set of size $3$ among any $7$ vertices have $\alpha(G)\ge n^{1/2-o(1)}$?
\end{qn}
\vspace{-0.3cm}
\Cref{lem:equivalence} shows that this is in some sense equivalent to a Ramsey problem of our graph $H_7$ vs an independent set, with the added benefit that we know the graph is $K_4$-free, which however seems to be a weaker condition than being $H_7$-free, so it is unclear if it is actually needed at all. The above bound for $m = 7, r=3$ is stronger than our general bound which makes it likely that the general bound can be further improved.

Let us now turn to the upper bounds. Our bounds all arise from our results on the Tur\'an $2$-density problem and the main open problem is to solve this problem precisely for all choices of parameters.

\begin{qn}
What is the minimum value of the $2$-density of a graph on $m$ vertices having no independent set of size $r$?
\end{qn}


We defined the answer to be $M(m,r)$ and determine it precisely for $r=3$, for ends of the range with $k=3$ (for $m=2r-1$ and up to lower order term for $m=3r-4$), for certain small cases such as $m=20,r=5$ and determine it up to $O_r(1/\sqrt{m})$ in general. While the parameter $k=\ceil{\frac{m}{r-1}}$ seems to control the rough behaviour of $M(m,r)$, in order to obtain precise results one needs to take into account the residue of $m$ modulo $r-1$. We have seen this in the $r=3$ case with the distinction between even and odd cases. This is also evident in the $k=3$ case from our results for the ends of this range. The behaviour for $k=3$ across the whole range also seems interesting and may be a good starting point for obtaining precise general results. 

In general, we can show that for the first half of the non-zero residues the $O_r(1/\sqrt{m})$ term is needed. It could be interesting to determine what happens for the remaining half of the residues and in particular when $r-1 \mid m$. Based on our results for $r=3$ and $m=20,r=5$, it seems plausible that $M(m,r)=m_2(K_k)=\frac{k+1}{2}$ for any $r$ and $m=k(r-1)$.

Finally, let us summarise the current state of the art in the following table, where we remind the reader that $f(n,m,r)$ stands for the smallest possible size of $\alpha(G)$ in an $n$ vertex graph with $\alpha_m(G) \ge r$. 

\vspace{-0.4cm}
\begin{center}
\resizebox{\textwidth}{!}{
\tabulinesep=1.2mm
\begin{tabu}{c|c|c|c|c|c|c}
    $m$  & $2r-1$ & $ [2r, 3r-5]$ & $3r-4, 3r-3$& $\Big((k-1)(r-1),(k-\frac12)(r-1)\Big]$ & $\Big(k-\frac12)(r-1),k(r-1)\Big]$ \\
    \hline
     $f(n,m,r) \ge $ & $\Omega\left(n^{1-\frac{1}{r}}\right)$ &\multicolumn{2}{c|}{$\Omega\left(n^{1-{1}/{\left\lfloor\frac{m-r+1}{m-2r+2}\right\rfloor}}\right)$} &  $\Omega\left(n^{\frac{1}{k-3/2}}\right)$ & $\Omega\left(n^{\frac{1}{k-1}}\right)$\\ 
    \hline
    $f(n,m,r) \le $ & \multicolumn{2}{c|}{$O\left(n^{1-\frac{1}{2r-2}}\right)$} & $n^{\frac{3}{5}+\frac{2}{5r-13}+o(1)}$ & \multicolumn{2}{c|}{$n^{\frac{2+o(1)}{k+1-O_r({1}/{\sqrt{k}})}}$}   
\end{tabu}
}
\end{center}
Where all the asymptotics are in terms of $n\to \infty$ apart from the $O_r$ term where 
$r$ is a constant and $k \to \infty$. 
 
We note in addition that the value for $r\le m \le 2r-2$ is known precisely and is equal to $n-o(n)$. One can use our arguments to obtain improvements for various parts of the above regimes and special cases, but let us mention here specifically that we have slightly weaker upper bounds in the final regime which do not require $k$ to be large compared to $r$ (see \Cref{prop:non-asymptotic-ub}).

\textbf{Acknowledgments.} We would like to thank Alexandr Kostochka for helping us find \cite{jones} and Michael Krivelevich for useful comments. We would also like to thank the anonymous referees for their careful reading of the paper and numerous suggestions which improved the presentation of this paper. The first author would like to gratefully acknowledge the support of the Oswald Veblen Fund.



\providecommand{\bysame}{\leavevmode\hbox to3em{\hrulefill}\thinspace}
\providecommand{\MR}{\relax\ifhmode\unskip\space\fi MR }
\providecommand{\MRhref}[2]{%
  \href{http://www.ams.org/mathscinet-getitem?mr=#1}{#2}
}
\providecommand{\href}[2]{#2}

\vspace{-0.5cm}

\appendix
\clearpage
\pagenumbering{roman}

\section{Independence number in sparse triangle-free graphs}
\label{appendixC}

\begin{thm}
Any $3$-degenerate, triangle-free graph $G$ with $m$ vertices and no independent set of size $r$ has $e(G) \ge 6m-13r-1$.
\end{thm}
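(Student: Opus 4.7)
The plan is to argue by induction on $m$, with base cases (small $m$, or $r \le 2$) immediate. For the inductive step, $3$-degeneracy supplies a vertex $v$ with $d := d(v) \le 3$; set $S := N[v]$ and $G' := G - S$. Triangle-freeness makes $N(v)$ an independent set, so the number of edges lost in passing to $G'$ is exactly $\sum_{u \in N(v)} d(u)$. The subgraph $G'$ is again triangle-free and $3$-degenerate and satisfies $\alpha(G') \le r - 2$, since $v$ extends any independent set in $G'$. Applying the inductive hypothesis to $G'$ with parameter $r - 1$ reduces the target bound $e(G) \ge 6m - 13r - 1$ to
\[
\sum_{u \in N(v)} d(u) \;\ge\; 6d - 7,
\]
which is automatic for $d \in \{0, 1\}$.

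For $d \in \{2, 3\}$ with all neighbors of $v$ of small degree the plan has two sub-branches. First, if some neighbor $u_i$ satisfies $d(u_i) \ge 13 - d$, deleting $\{v, u_i\}$ instead of $S$ and invoking induction with parameter $r$ (using only $\alpha(G - \{v, u_i\}) \le \alpha(G) \le r-1$) suffices, since the $d + d(u_i) - 1$ edges removed then exceed the $12$-unit inductive shortfall. Second, if every $u_i$ has $d(u_i) \le 12 - d$, then the second neighborhood $N^2[v]$ is bounded by an absolute constant, so we enumerate the triangle-free configurations on it. In each configuration we either (a) find an independent set $I \subseteq N^2[v]$ containing $v$ with $|I| \ge 3$, delete $N[I]$, and apply induction with parameter $r - 3$ — the $26$ extra units of slack from the decrease in $r$ then absorb the additional edge loss — or (b) the configuration is one of a short list of ``rigid'' local structures for which the edge lower bound can be verified directly.

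The main obstacle is sub-branch (b) when $d(v) = 3$ with all three neighbors of low degree: one must verify the inequality across several non-isomorphic triangle-free local structures on $N^2[v]$, which can contain on the order of thirty vertices. This enumeration is essentially the technical heart of Jones' original proof for triangle-free maximum-degree-$4$ graphs; the modification is that $3$-degeneracy (rather than bounded maximum degree) already supplies the vertex $v$ of small degree that drives the reduction, while higher-degree vertices elsewhere in $G$ only make the first sub-branch succeed more easily. Consequently Jones' case analysis transfers with only bookkeeping adjustments to our setting.
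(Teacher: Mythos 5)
Your inductive framework (pick a vertex $v$ with $d(v)\le 3$ via $3$-degeneracy, delete $N[v]$, decrease $r$ by one) is broadly the same one the paper uses, but your proposal leaves the hardest case essentially unaddressed, and the paper handles that case with two ideas you do not have.

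The arithmetic of your first reduction is fine: removing $N[v]$ costs $\sum_{u\in N(v)}d(u)$ edges and leaves a graph with $\alpha \le r-2$, so you need $\sum_{u\in N(v)}d(u)\ge 6d-7$. Taking $v$ of minimum degree gives $\sum_{u\in N(v)}d(u)\ge d^2$, which is short by $1$ when $d=2$ and by $2$ when $d=3$. Your first sub-branch (some $u_i$ with $d(u_i)\ge 13-d$) is correct. The problem is sub-branch (b). You reduce to the case where $v$ and all its neighbors have small degree, observe that $N^2[v]$ has bounded size, and then assert that ``Jones' case analysis transfers with only bookkeeping adjustments.'' That is exactly the gap: the configurations to enumerate are not finite in the relevant sense, because vertices of $N^2[v]\setminus N[v]$ have no degree bound, and the case analysis in Jones' paper is tailored to graphs of \emph{maximum} degree $4$, a hypothesis you do not have. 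You are left having to reprove, from scratch, a structural statement on possibly $30$-plus vertices that is the technical core of a separate paper, and you have not shown how it adapts. (Also, your ``$26$ extra units of slack'' should be $39=13\cdot 3$ if $|I|=3$; and once you subtract $6|N[I]|$ the net slack is not obviously positive, so even the shape of that sub-branch needs repair.)

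The paper avoids this entirely by two devices. First, it proves a \emph{strengthened} inductive claim $e(G)\ge 6m-13r-1+I$, where $I=1$ if $\delta(G)<3$ and $I=0$ otherwise; the $+I$ term supplies exactly the single missing edge in the $\delta(G)=2$ case without any enumeration. Second, for $\delta(G)=3$ it splits into the $3$-regular case — handled in one line by Staton's theorem, which forces $r\ge\frac{5}{14}m$ and hence $e=\frac{3m}{2}\ge 6m-13r$ — and the non-regular case, where $3$-degeneracy supplies a degree-$3$ vertex adjacent to a vertex of degree $\ge 4$, again closing the $2$-edge shortfall with the help of the $+I'$ term of the subgraph. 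Your proposal is missing both the strengthened hypothesis and the appeal to Staton's theorem, and these are precisely what let the paper's proof close the gap that your sub-branch (b) leaves open.
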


\begin{proof}
Our proof is by induction on $m$. We will actually prove a slightly stronger result. We will show that $e(G) \ge 6m-13r-1+I$, where $I$ is equal to $1$ if $\delta(G)<3$ and equal to $0$ otherwise. 

For the base case we show the result for $m\le 5$ and any $r$. If $m \le 2,$ the result is immediate since the desired bound follows from $e(G) \ge 0.$ If $m\ge 3,$ since the graph is triangle-free we must have $r \ge 2,$ so the bound again follows for $m\le 4$ immediately and if $r\ge 3$ also for $m=5$. In the only remaining case, if $r=2,m=5,$ we must have $G=C_5$ and the bound again holds.

Now let us assume $m \ge 6$ and that the result holds for any graph on at most $m-1$ vertices satisfying our conditions. If there is an isolated vertex in $G,$ then by removing it we obtain a graph on $m-1$ vertices with independence number at most $r-1$ which implies $e(G) \ge 6(m-1)-13(r-1)-1 \ge 6m-13r+6\ge 6m-13r-1+I$. Similarly if there is a vertex of degree $1,$ removing it and its neighbour leaves us with a graph on $m-2$ vertices with independence number at most $r-1$ giving us the bound $e(G) \ge 6(m-2)-13(r-1)-1 \ge 6m-13r\ge 6m-13r-1+I$. So we may assume $\delta(G) \ge 2$.

If $\delta(G)=2$ let $v$ be a vertex of degree $2$, with neighbours $u,w$. If there are at most $2$ vertices, other than $v$, adjacent to one of $u$ or $w,$ then removing them and $v,u,w$ leaves us with a graph on at least $m-5$ vertices with independence number at most $r-2$. This leftover graph has at least $6(m-5)-13(r-2)-1=6m-13r-5$ edges. On the other hand $G$ in addition has at least $5$ edges touching these $5$ removed vertices since minimum degree is $2$ giving us $e(G) \ge 6m-13r \ge 6m-13r+I-1$ as desired. So there are at least $3$ vertices (other than $v$) adjacent to $u$ or $w$ and in particular $v,u,w$ are incident to at least $5$ edges. By removing $v,u$ and $w$ we obtain a graph on $m-3$ vertices with independence number at most $r-1$ which hence has at least $6(m-3)-13(r-1)-1=6m-13r-6$ edges. Since $v,u,w$ are incident to at least $5$ edges we need to gain one more. Note first that $G\setminus{u,v,w}$ must have minimum degree $3$ or we gain one from its $I$ term. Note also that either $u$ or $w$ must have degree $2$ or $v,u,w$ touch at least $6$ edges and we gain. Say $u$ is of degree $2$, and $w'$ is its neighbour other than $w$. $w'$ must have degree at least $3$ in $G\setminus{u,v,w}$ so at least $4$ in $G$. This means that $u,v,w'$ touch at least $6$ edges and repeating the above argument with $u$ in place of $v$ we are done.

Final case is if $\delta(G)=3$. We may assume $G$ is connected, as otherwise we may apply induction on each of the components and are done. If $G$ is $3$-regular, then the number of edges is $e(G)=3m/2$ and a result of Staton (see Theorem 6 in \cite{staton}) on graphs with maximum degree $3$ implies $r \ge \frac{5}{14} m$ this implies $e(G)=3m/2 \ge 6m-13r$ as desired. Hence, we may assume there is a vertex of degree at least $4$ and in particular, since $G$ is $3$-degenerate that there exists a vertex $v$ of degree $3$ adjacent to a vertex $u$ of degree at least $4$. Let $w,q$ be the remaining neighbours of $v$. Since $\delta(G)=3$ and $d(u)=4$ we know there are at least $10$ edges touching $v,u,w$ or $q$. Removing these $4$ vertices we get a graph on $m-4$ vertices with independence number at most $r-1$ so by induction it has at least $6(m-4)-13(r-1)+I'-1=6m-13r-1-11+I'$ edges, where $I'=1$ if the remainder graph has minimum degree at most $2$. This means that we are done unless there are exactly $10$ edges touching $v,u,w,q$ and the remainder graph has minimum degree at least $3$. $w$ is a vertex of degree $3$ with two neighbours in the remainder graph, so each having degree at least $3$ there and at least $4$ in $G$ (since they are adjacent to $w$). This means that $w$ and its neighbours touch at least $11$ edges so repeating the argument as above with $w$ in place of $v$ we obtain the desired bound.
\end{proof}

\section{The M(20,5) case}
\label{appendixB}

In order to show $M(20,5)\ge 3$, we will need a few intermediate results. Let us define $e(m,r)$ to be the minimum possible number of edges in an $m$-vertex graph with independence number at most $r-1,$ provided it has $2$-density less than $\frac{k+1}{2}$ (where $k=\ceil{m/(r-1)}$). It can be thought of as the variant of determining $M(m,r)$ where we care about the final number of edges instead of the $2$-density, but we impose a restriction of not having too dense parts. For determining $M(20,5)$ we will need bounds on $e(14,4)$ and $M(15,4)$ for which in turn we will need $e(9,3)$.

\begin{lem}\label{lem9-4}
$e(9,3) \ge 19$.
\end{lem}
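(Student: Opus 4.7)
The plan is to extract just enough information about the minimum degree to reduce to the single case $e(G)=18$, and then analyse the local structure at an arbitrary vertex until a $K_5$ is forced. Throughout, the constraint $m_2(G)<3$ will be used only through its immediate consequence that $G$ is $K_5$-free, since $m_2(K_5)=3$.

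First I would observe that $\alpha(G)\le 2$ forces, for every vertex $v$, the non-neighbourhood $V\setminus(\{v\}\cup N(v))$ to span a clique (otherwise a non-edge inside it together with $v$ would give an $I_3$). As $G$ is $K_5$-free this clique has size at most $4$, so $d(v)\ge 9-1-4=4$. Summing over vertices gives $e(G)\ge 18$, with equality only if $G$ is $4$-regular. The remainder of the argument will rule out this $4$-regular possibility and thus give $e(G)\ge 19$.

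Fix a vertex $v$ in a putative $4$-regular example, and write $A$ for its non-neighbourhood and $B=N(v)$; both have size $4$. By the previous step $A$ is a $K_4$, so each $u\in A$ spends $3$ of its $4$ edges inside $A$, leaving exactly one neighbour $f(u)\in B$; this defines a map $f:A\to B$. Applying the same non-neighbourhood-is-a-clique observation to $u\in A$ shows that the non-neighbourhood of $u$ is exactly $\{v\}\cup(B\setminus\{f(u)\})$, which must be a $K_4$. Hence $B\setminus\{f(u)\}$ is a $K_3$ for every $u\in A$.

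The final contradiction comes from a short analysis of $f$. If $|\operatorname{Im}(f)|\ge 3$, then the three (or four) sets $B\setminus\{b\}$ with $b\in\operatorname{Im}(f)$ jointly cover every pair in $B$, so $B$ itself is a $K_4$ and $\{v\}\cup B$ is a $K_5$, contradicting $K_5$-freeness. If $|\operatorname{Im}(f)|\le 2$, a direct degree count on an image vertex $b$ (which acquires $v$, all $|f^{-1}(b)|$ neighbours in $A$, and the forced edges inside $B$ coming from the $K_3$-condition) already exceeds $4$, again a contradiction. The only slightly delicate step is this last case split, but each subcase collapses in one line, so I do not anticipate a real obstacle.
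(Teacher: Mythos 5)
Your proof is correct, and it takes a genuinely different route to the paper's. Both arguments begin by deducing $\delta(G)\ge 4$ (the non-neighbourhood of any vertex is a clique by $\alpha\le 2$, and is $K_5$-free by $m_2<3$), and both fix a vertex $v$ of degree $4$, noting that its non-neighbourhood $A$ is a $K_4$. From that point the arguments diverge. The paper proceeds by a \emph{direct edge count}: it observes that since $L=\{v\}\cup N(v)$ has five vertices it contains a missing edge (else $K_5$), that the two endpoints of this missing edge must together dominate all of $R=A$ (else $\alpha\ge 3$), and then totals up at least $13$ edges touching $L$ plus the $6$ edges inside $R$ to reach $19$ in one pass, without ever needing to assume regularity. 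You instead first note that $\delta\ge 4$ already gives $e(G)\ge 18$ with equality only if $G$ is $4$-regular, and then \emph{rule out the $4$-regular case structurally}: each $u\in A$ has a unique neighbour $f(u)\in B=N(v)$, and applying the non-neighbourhood-is-a-clique observation to $u$ forces $B\setminus\{f(u)\}$ to be a triangle. Case analysis on $|\operatorname{Im}(f)|$ then produces either a $K_5$ on $\{v\}\cup B$ (when $|\operatorname{Im}(f)|\ge 3$) or a vertex of degree at least $5$ (when $|\operatorname{Im}(f)|\le 2$, by picking $b\in\operatorname{Im}(f)$ with $|f^{-1}(b)|\ge 2$). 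The paper's count is slicker and avoids any case split, which is probably why it generalises more easily to the companion estimates $e(14,4)$ and $M(15,4)$ in the appendix; your reduction to the regular extremal case is a bit more work but gives a cleaner structural picture of why the bound $18$ cannot be attained, and it isolates $K_5$-freeness as the \emph{only} consequence of $m_2<3$ that is needed.
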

\begin{proof}
Let $G$ be a graph with $9$-vertices with $\alpha(G) \le 2$ and $m_2(G)<3$. If $G$ has a vertex $v$ of degree at most $3,$ then $v$ has at least $5$ non-neighbours which must span a clique (since $\alpha(G) \le 2$) which implies $m_2(G) \ge m_2(K_5)=3$, a contradiction. If $\delta(G) \ge 5,$ then $e(G) \ge 9 \cdot 5/2 >19,$ and we are done. So $\delta(G)=4$ and there exists a vertex $v$ with degree exactly $4$. Let $L=v \cup N(v)$ and $R=G \setminus { L}$, so $|L|=5,|R|=4$. Since $|L|=5$ there must be a missing edge in $L$ (or we find a $K_5$ and have $m_2(G) \ge 3$). 
Observe that vertices making a missing edge in $L$ can not both be non-adjacent to the same vertex in $R$ (or $\alpha(G) \ge 3$) meaning they need to send at least $4$ edges towards $R$. We obtain that there must be at least $4+3\cdot 4-3=13$ edges touching $L$, since there are $4$ cross edges touching the missing edge, the remaining $3$ vertices inside $L$ each have degree at least $\delta(G)=4$ and we double counted only the edges between these $3$ vertices, so at most $3$. Now since $R$ consists of non-neighbours of $v$ it spans a $K_4$ so there are $6$ edges within $R$ and in total we have the claimed $13+6=19$ edges.
\end{proof}

\begin{lem}\label{lem14-4}
$e(14,4) \ge 33$.
\end{lem}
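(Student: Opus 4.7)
The strategy is to emulate the proof of Lemma \ref{lem9-4}, analyzing the minimum degree of $G$ and reapplying Lemma \ref{lem9-4} on suitable $9$-vertex non-neighbourhoods. Recall that $m_2(G) < 3$ means $G$ is $K_5$-free and every subgraph on $n \ge 3$ vertices has at most $3n - 6$ edges.

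First I would show $\delta(G) \ge 4$: if some $v$ has $d_G(v) \le 3$, its non-neighbour set $S$ has $|S| \ge 10$ and $\alpha(G[S]) \le 2$, and in any such subgraph $H$ each vertex's non-neighbourhood is a clique of size at most $4$ (else $K_5 \subseteq G$), forcing $\delta(H) \ge |H| - 5$ and hence $e(H) \ge |H|(|H|-5)/2 > 3|H|-6$ for $|H| \ge 10$, a contradiction. If $\delta(G) \ge 5$ then $e(G) \ge 35$, so assume $\delta(G) = 4$. A preliminary degree reduction handles the case of few degree-$4$ vertices: writing $n_4$ for their number, if $n_4 \le 4$ then $\sum d_G \ge 4 n_4 + 5(14 - n_4) \ge 66$ and $e(G) \ge 33$. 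So assume $n_4 \ge 5$; since $G$ is $K_5$-free, two of these are non-adjacent — call them $v$ and $u_0$.

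Set $N = N(v)$, $S = V \setminus (N \cup \{v\})$, so $|N|=4$, $|S|=9$, $u_0 \in S$. The minimum-degree argument inside $G[S]$ gives $\delta(G[S]) \ge 4$, and since $d_G(u_0) = 4$ we must have $d_N(u_0) = 0$, hence $d_{G[S]}(u_0) = 4$. Lemma \ref{lem9-4} applied to $G[S]$ gives $e(G[S]) \ge 19$, and a degree count in $N$ yields $2e(N) + e(N, S) \ge 12$, so $e(N) + e(N, S) \ge 6$. If $\delta(G[S]) \ge 5$, then $e(G[S]) \ge 23$ and $e(G) \ge 4 + 23 + 6 = 33$. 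Otherwise $\delta(G[S]) = 4$, and by $\alpha(G[S]) \le 2$ the four $G[S]$-non-neighbours $T$ of $u_0$ form a $K_4$; correspondingly $u_0$'s four $G[S]$-neighbours are exactly $Y := S \setminus (T \cup \{u_0\})$. I would then reapply Lemma \ref{lem9-4} to the $9$-vertex non-neighbour set $S' = \{v\} \cup T \cup N$ of $u_0$: this yields $e(G[S']) \ge 19$, and subtracting the forced contributions from the $K_4$ on $T$ and the star $v \cup N$ gives $e(N) + e(N, T) \ge 9$. Combining with $e(G[S]) \ge 19$ and $e(N, u_0) = 0$,
\[
e(G) = 4 + e(G[S]) + e(N) + e(N, T) + e(N, Y) \ge 32 + e(N, Y).
\]

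The finish shows $e(N, Y) \ge 1$: if instead $e(N, Y) = 0$, then $\alpha(G) \le 3$ applied to each $\{w_i, w_j, y_k, y_l\}$ with $w_i, w_j \in N, y_k, y_l \in Y$ forces $w_i \sim w_j$ or $y_k \sim y_l$, so either $N$ or $Y$ is a $K_4$. But $N = K_4$ together with $v \sim N$ gives $K_5 \subseteq G$, and $Y = K_4$ together with $u_0 \sim Y$ (established above) also gives $K_5 \subseteq G$; both contradict $m_2(G) < 3$. The main obstacle is arranging the double application of Lemma \ref{lem9-4} — first to $v$, then to $u_0$ — which is precisely what closes the gap between the naive count of $29$ and the target $33$.
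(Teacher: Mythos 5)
Your proof is correct, but it reaches the bound by a genuinely different route than the paper. The paper also fixes a degree\nobreakdash-$4$ vertex $v$, sets $L=\{v\}\cup N(v)$, $R=V\setminus L$, and applies \Cref{lem9-4} once to $G[R]$ to obtain $e(G[R])\ge 19$; but for the remaining $\ge 14$ edges touching $L$ it uses a short \emph{missing-edge} count: $L$ is not a $K_5$, so it has a non-edge $\{x,y\}\subseteq N(v)$, and since the common non-neighbours of $x,y$ in $R$ must span a clique (by $\alpha(G)\le 3$) there are at most $4$ of them, so $\{x,y\}$ sends $\ge 5$ edges to $R$; adding $\delta(G)\ge 4$ for the three remaining vertices of $L$ and subtracting the $\le 3$ internal double-counts gives $14$ at once. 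You instead carefully arrange a second degree\nobreakdash-$4$ vertex $u_0$ non-adjacent to $v$ (via the $n_4\le 4$ split and $K_5$\nobreakdash-freeness), apply \Cref{lem9-4} a \emph{second} time to the $9$-vertex non-neighbourhood $S'=\{v\}\cup N\cup T$ of $u_0$ to extract $e(N)+e(N,T)\ge 9$, and then still need a dedicated argument to force the one extra edge $e(N,Y)\ge 1$. Both arguments ultimately establish the same inequality $e(L)+e(L,R)\ge 14$, i.e.\ $e(N)+e(N,S)\ge 10$; the paper's missing-edge count reaches it in one stroke and requires neither the $n_4$ case split, the non-adjacent degree\nobreakdash-$4$ pair, the $\delta(G[S])\ge 5$ subcase, nor the $e(N,Y)\ge 1$ step. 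Your double application of the $e(9,3)$ bound is a legitimate and somewhat more structural alternative, but it is appreciably longer and more case-heavy than what is actually needed here.
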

\begin{proof}
Let $G$ be a graph with $14$-vertices with $\alpha(G) \le 3$ and $m_2(G)<3$. If $G$ has a vertex $v$ of degree at most $3,$ then $v$ has at least $10$ non-neighbours which contain no independent set of size $2$ so must have $2$-density at least $3$ by \Cref{thm:m2k3-lb}. If $\delta(G) \ge 5,$ then $e(G) \ge 14 \cdot 5/2 >33$ as desired. So $\delta(G)=4$ and there exists a vertex $v$ with degree exactly $4$. Let again $L=v \cup N(v)$ and $R$ be the rest of the graph. Note now that a missing edge in $L$ must touch at least $5$ edges going to $R$, as otherwise there are $9-4\ge 5$ common non-neighbours of the missing edge and they must span a clique. Similarly as in \Cref{lem9-4} this means there needs to be $14$ edges touching $L$. On the other hand $G[R]$ is a $9$-vertex graph with $\alpha(G[R])\le 3$ (since $v$ is a non neighbour to anyone in $R$) and it has $m_2(G[r])\le m_2(G)<3$ so \Cref{lem9-4} applies implying there are at least $19$ edges inside $G[R]$ and giving us our claimed total.
\end{proof}

Backtracking along the above proofs it is not hard to show that they are both optimal and even deduce a lot about the structure of the optimal examples. We are now ready to prove our final intermediate result which determines $M(15,4)$. 

\begin{lem}\label{lem15-4}
$M(15,4) = 3$.
\end{lem}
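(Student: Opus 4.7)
The upper bound $M(15,4)\le 3$ is witnessed by $3K_5$: a disjoint union of three copies of $K_5$ has 15 vertices, $\alpha=3$, and 2-density $m_2(K_5)=3$. For the lower bound, I would suppose that $G$ is a 15-vertex graph with $\alpha(G)\le 3$ and $m_2(G)<3$ and derive a contradiction. Since $m_2(K_5)=3$, the hypothesis forces $G$ to be $K_5$-free throughout.

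The plan is to split on $\delta(G)$. If $\delta(G)\le 4$, pick $v$ of minimum degree; then $V\setminus N[v]$ contains at least 10 vertices and has $\alpha\le 2$ (else $v$ extends an independent set to one of size $\ge 4$). By \Cref{thm:m2k3-lb} with $k=5$, any 10-vertex graph with $\alpha\le 2$ has $m_2\ge 3$, contradicting $m_2(G)<3$. If $\delta(G)\ge 6$, then $e(G)\ge 45$ and $d_2(G)\ge 44/13>3$, again a contradiction. So the remaining case is $\delta(G)=5$, which is the main one.

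For $\delta(G)=5$, pick $v$ with $d(v)=5$ and let $N=N(v)$, $S=V\setminus N[v]$, so $|N|=5$ and $|S|=9$. Since $\alpha(G[S])\le 2$ and $m_2(G[S])<3$, \Cref{lem9-4} gives $e(G[S])\ge 19$ and (from its proof) identifies a vertex $w\in S$ with $d_{G[S]}(w)=4$ whose four non-neighbours in $S$ span a $K_4$, which I call $K$. I then extract three structural constraints: \textbf{(i)} $G[N]$ is $K_4$-free (else $K_5$ with $v$), so $e(G[N])\le 8$, with equality only for the Tur\'an graph $K_{2,2,1}$; but if $e(G[N])=8$, then $G[N\cup\{v\}]=K_{2,2,1,1}$ has 13 edges on 6 vertices, giving $d_2=3$, a contradiction, so $e(G[N])\le 7$. \textbf{(ii)} Every $u\in V\setminus K$ has $|N(u)\cap K|\le 3$ (lest $K\cup\{u\}=K_5$). \textbf{(iii)} Call $u$ a \emph{3-neighbour} of $K$ if $|N(u)\cap K|=3$, missing a unique $k(u)\in K$. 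For two adjacent 3-neighbours $u_1,u_2$: if $k(u_1)\ne k(u_2)$, then $G[K\cup\{u_1,u_2\}]$ has $6+3+3+1=13$ edges on 6 vertices, so $d_2=3$, contradiction; and if $k(u_1)=k(u_2)$, then $\{u_1,u_2\}\cup(K\setminus\{k(u_1)\})$ is a $K_5$, contradiction. So the 3-neighbours of $K$ form an independent set, which by $\alpha(G)\le 3$ has at most 3 elements.

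Combining these constraints with the cross-edge bounds forced by $\delta=5$, namely $2e(G[N])+e(N,S)\ge 20$ and $2e(G[S])+e(N,S)\ge 45$, I aim to show that either some $u\in N$ has $d_S(u)\ge 6$---in which case $G[S\cup\{u\}]$ is a 10-vertex subgraph with $\ge 25$ edges and $d_2\ge 3$---or else the rigid structure of $K$, $w\in S$, and the independent set of 3-neighbours produces some other small dense subgraph (a 6-vertex subgraph with 13 edges or a 10-vertex subgraph with 25 edges) with $d_2\ge 3$, completing the contradiction. The main obstacle is this final casework: one must rule out the scenario where $G[N]$ is sparse, the edges from $N$ to $S$ are spread too evenly to yield a single high-$d_S$ vertex, and the 3-neighbours are too few to combine with $K$ into a 13-edge 6-vertex subgraph. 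Disposing of this last configuration requires carefully tracking how edges from $N$ partition between $K$ and $S\setminus K$ (using $\delta=5$ on each vertex of $S$ and $K$-vertex-by-$K$-vertex degree counts), and exploiting the rigid $K_4$ structure inside $S$ coming from $w$'s neighbourhood.
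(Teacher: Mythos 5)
Your upper bound ($3K_5$), your reductions for $\delta\le 4$ (via $M(10,3)\ge 3$) and $\delta\ge 6$, and your structural observations (i)--(iii) about $G[N]$, the $K_4\subseteq S$, and its $3$-neighbours are all correct. But the proposal does not actually prove the main case $\delta(G)=5$: you lay out constraints and say you ``aim to show'' one of several outcomes and that the ``main obstacle'' is the final casework, without executing it. That casework is not a routine detail --- it is where the entire difficulty lies, and your stated inequalities do not close it on their own. Concretely, suppose $e(G[N])=5$, each $u\in N$ has $d_N(u)+d_S(u)=4$ with $\sum_{u\in N}d_S(u)=10$, and $e(G[S])=19$: all of your global constraints ($2e(G[N])+e(N,S)\ge 20$, $2e(G[S])+e(N,S)\ge 45$, $e(G[N])\le 7$, $e(G[S])\ge 19$, $d_S(u)\le 5$) are satisfied, yet $e(G)=5+5+10+19=39$ and $d_2(G)=38/13<3$, with no $u\in N$ of $d_S(u)\ge 6$. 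So you genuinely need the fine-grained interaction between the $K_4$ inside $S$, the $3$-neighbour independent set, and the edges out of $N$ --- and that interaction is precisely what you do not carry out.

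The paper's proof sidesteps this by never separating $v$ from $N(v)$: it works with $L=N[v]$ ($|L|=6$) and $R=V\setminus L$ ($|R|=9$) and bounds the number of edges \emph{touching} $L$ from below. Since $m_2(G)<3$ there is no $K_5$ in $L$, so the missing edges in $G[L]$ do not form a star, yielding two disjoint missing pairs; each missing pair has at most $4$ common non-neighbours in $R$ (else a $K_5$ there), hence sends at least $5$ edges to $R$, and the ``no missing triangle in $L$'' observation forces at least $2$ of the $4$ cross-edges between the two missing pairs to be present. Together with the minimum degree on the remaining two vertices of $L$ this gives $\ge 21$ edges touching $L$, and adding $e(G[R])\ge 19$ from \Cref{lem9-4} gives $e(G)\ge 40$, i.e.\ $d_2(G)\ge 3$. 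This is a one-page, purely local count on $L$, with no need to extract or propagate the $K_4$ structure inside $R$. If you want to salvage your decomposition, the shortest fix is essentially to merge $v$ back into $N$ and reason about missing edges in $N[v]$ as the paper does; pursuing the $K_4$-plus-$3$-neighbour structure can be made to work but requires substantially more care than your sketch suggests.
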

\begin{proof}
Let $G$ be a graph with $15$-vertices with $\alpha(G) \le 3$ and $m_2(G)<3$. If there is a vertex of degree at most $4$ removing it and its neighbourhood, similarly as above reduces to the case of $M(10,3)$ which by \Cref{thm:m2k3-lb} is equal to $3$. So $\delta(G) \ge 5$. If $\delta(G) \ge 6,$ then $e(G) \ge 45$ and $d_2(G)\ge \frac{45-1}{15-2}>3$. So let $v$ be a vertex with degree exactly $5$. Let $L=v \cup N(v)$ and $R$ be the rest of the graph. $G[R]$ is a $9$-vertex graph with $\alpha(G[R])\le 2$ and it has $m_2(G[r])\le m_2(G)<3$, so $R$ spans at least $19$ edges by \Cref{lem9-4}. We claim there needs to be at least $21$ edges touching $L$. If there were a triangle of missing edges inside $L$ its vertices need to send at least $9$ edges across (or we get $\alpha(G) \ge 4$) so our usual calculation tells us there are $9+3\cdot 5-3=21$ edges touching $L$ as desired. Note also that missing edges inside $L$ can not span a star (or we can remove its centre and be left with a $K_5$ inside $L$ so there need to exist $2$ disjoint missing edges. Each missing edge sends at least $5$ edges across (or we find a $K_5$ in their common non-neighbourhood) and since we are assuming there is no missing triangle inside $L$ there needs to be $2$ actual edges inside $L$ between our missing pair. Putting these together we get $2\cdot 5+2+2\cdot 5-1=21$ edges touching $L$ as desired. So there are at least $21+19=40$ edges in $G$ and $m_2(G) \ge 3$ as desired.
\end{proof}

\begin{thm}
$M(20,5)=3.$
\end{thm}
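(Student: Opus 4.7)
The plan is a contradiction argument following the template of Lemmas~\ref{lem9-4}--\ref{lem15-4}. Suppose $G$ is a $20$-vertex graph with $\alpha(G) \le 4$ and $m_2(G) < 3$. First, if some vertex has degree at most $4$, then removing it and its neighborhood leaves at least $15$ vertices, and any $15$-vertex induced subgraph has $\alpha \le 3$ and $m_2 < 3$, contradicting $M(15,4) = 3$ (Lemma~\ref{lem15-4}). If $\delta(G) \ge 6$, then $e(G) \ge 60$ and hence $d_2(G) \ge 59/18 > 3$, also a contradiction. Hence $\delta(G) = 5$.

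Fix $v$ with $\deg(v) = 5$, and set $L = \{v\} \cup N(v)$, $R = V(G) \setminus L$, so $|L| = 6$ and $|R| = 14$. Since $v$ extends any independent set in $G[R]$, we have $\alpha(G[R]) \le 3$, and so Lemma~\ref{lem14-4} yields $e(R) \ge 33$. To obtain a contradiction it suffices to show $e(L) + e(L,R) \ge 22$, which would give $e(G) \ge 55$ and hence $d_2(G) \ge 54/18 = 3$. Note also that $G$ is $K_5$-free because $m_2(K_5) = 3$, so in particular $N(v)$ is $K_4$-free.

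The central case split is on whether $N(v)$ contains an independent triple. If $\{a,b,c\} \subseteq N(v)$ is an $I_3$, then any common non-neighbor of $\{a,b,c\}$ extends it to an $I_4$; by $\alpha(G) \le 4$, the common non-neighborhood of $\{a,b,c\}$ in $V \setminus \{a,b,c\}$ has independence number at most $1$ and hence is a clique, whose size is at most $4$ by $K_5$-freeness. This forces $|N(a) \cup N(b) \cup N(c)| \ge 13$, of which at most $3$ lie in $L$ (namely $v$ together with the two remaining vertices of $N(v) \setminus \{a,b,c\}$), so $\deg_R(a) + \deg_R(b) + \deg_R(c) \ge 10$. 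Combining this with $\sum_{t \in L \setminus \{a,b,c\}} \deg(t) \ge 15$ and $e(L \setminus \{a,b,c\}) \le 3$ yields $e(L) + e(L,R) \ge 25 - e(L \setminus \{a,b,c\}) \ge 22$, as desired.

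The main obstacle is the remaining case $\alpha(N(v)) \le 2$, where $N(v)$ is a $5$-vertex graph that is both $I_3$-free and $K_4$-free---essentially limited to $C_5$, $K_2 \cup K_3$, and a handful of related structures. For most of these (for instance $C_5$), summing the constraint $\deg_R(a) + \deg_R(b) \ge 5$ for each non-adjacent pair in $N(v)$ (coming from $|CN(a,b)| \le 9$, itself a consequence of $M(10,3) = 3$ via \Cref{thm:m2k3-lb}) already delivers $e(L) + e(L,R) \ge 22$. The delicate subcase is $N(v) = K_2 \cup K_3$ with every $L$-vertex of degree exactly $5$, where direct counting yields only $e(L) + e(L,R) = 21$. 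To close this unit-size gap I plan to exploit the rigidity of the configuration: using $\deg_L(x) \le 1$ for every degree-$5$ vertex $x \in R$ (another consequence of $|CN(v,x)| \le 9$), a direct degree count in $R$ shows $e(R) \le 29$ whenever every $R$-vertex has degree exactly $5$, so achieving $e(R) \ge 33$ forces either extra edges from $L$ to $R$ (pushing $e(L,R) \ge 13$) or extra edges concentrated inside $R$ (pushing $e(R) \ge 34$). In either subcase one obtains $e(G) \ge 55$, completing the contradiction.
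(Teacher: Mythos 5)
Your proposal follows the paper's setup faithfully through the reduction to $\delta(G)=5$, the choice of $L=\{v\}\cup N(v)$ and $R=G\setminus L$, and the bound $e(R)\ge 33$ from \Cref{lem14-4}; the handling of the $I_3\subseteq N(v)$ subcase via $|CN(a,b,c)|\le 4$ is also sound. The divergence comes exactly where the paper itself notes the edge-counting approach stalls, and your proposed way around it does not close the gap.

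There are two problems with the endgame. First, the set of "delicate" structures is wider than you indicate: besides $N(v)=K_2\cup K_3$ (the paper's case a), the bowtie $N(v)\cong K_1 + 2K_2$ — two triangles sharing the apex $s$, so $G[L]$ is two $K_4$'s glued along $vs$ (the paper's case b) — also yields only $21$ edges touching $L$ under the same degree accounting; your nonadjacent-pair sum for it gives $e(L)+e(L,R)\ge 11+10=21$, not $22$. Second, and more seriously, the degree-count argument you sketch for the tight $K_2\cup K_3$ case does not establish $e(R)\ge 34$. You correctly observe that if every $R$-vertex has degree $5$ then $\deg_L(x)\le 1$ forces $e(R)=29$, contradicting $e(R)\ge 33$; but the conclusion you draw — that $e(R)\ge 33$ "forces" either $e(L,R)\ge 13$ or $e(R)\ge 34$ — is not a genuine dichotomy. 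Nothing rules out $e(L,R)=12$ with $e(R)=33$ exactly: the inequality $e(R)\ge 33$ is already strictly above the all-degree-$5$ baseline of $29$, and raising it by one more unit is a separate claim with no supporting argument. The paper explicitly acknowledges that the counting argument only yields $e(G)\ge 54$ (hence $m_2\ge 53/18$) and switches strategy at precisely this point.

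What actually closes the argument in the paper is structural, not numerical, and it uses a fact your proposal never establishes: strengthening the degree dichotomy to "at most $8$ vertices of degree $>5$" (not merely $\delta(G)\ge 6 \Rightarrow$ contradiction) gives at least $12$ vertices of degree exactly $5$. The paper then shows that around \emph{every} degree-$5$ vertex the induced $G[L]$ is one of two rigid configurations, rules out one (the $K_3$--$K_4$ vertex-amalgam, i.e.\ your $K_2\cup K_3$ case) by a further global argument, and shows the surviving configuration forces pairwise-disjoint copies of a $12$-vertex gadget $H$ (two $K_4$'s joined by a $K_{2,2}$), one per group of four degree-$5$ vertices. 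With $\ge 12$ such vertices this requires $\ge 24$ vertices, contradicting $|G|=20$. That multi-vertex, disjointness-based contradiction is a genuinely different idea than pushing $e(G)$ past $55$, and the edge count alone cannot replace it. To repair your proposal you would need to either (i) prove a strengthened version of \Cref{lem14-4} giving $e(14,4)\ge 34$ under the extra structural constraints holding in the tight case, or (ii) abandon the purely local count at a single $v$ and reproduce the paper's global disjointness argument.
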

\begin{proof}
Let $G$ be a graph with $20$-vertices with $\alpha(G) \le 4$ and $m_2(G)<3$. If there is a vertex of degree at most $4$ removing it and its neighbourhood, reduces to the case of $M(15,4)$ solved in \Cref{lem15-4}. So $\delta(G) \ge 5$. If there are more than $8$ vertices of degree greater than $5,$ then $e(G) \ge 55$ and $d_2(G)\ge \frac{55-1}{20-2}=3$. So there are at least $12$ vertices of degree $5$. Let $v$ be such a vertex and Let $L=v \cup N(v)$ and $R$ be the rest of the graph. $G[R]$ is a $14$-vertex graph with $\alpha(G[R])\le 3$ and it has $m_2(G[r])\le m_2(G)<3$, so $R$ spans at least $33$ edges by \Cref{lem14-4}. If we find $22$ edges touching $L$ we obtain  $e(G) \ge 55$ and are done.

Note that any missing triangle in $L$ sends at least $10$ edges across (or the $\ge 5$ common non-neighbours need to make a clique). So as in the previous lemma we obtain at least $22$ edges touching $L$ as desired. Once again the missing edges can not span a star or we find a $K_5$ inside $L$ so again we need to be able to find a disjoint pair of missing edges in $L$. Since again any missing edge must send at least $5$ edges to $R$ (or we reduce to the case of $M(10,3)=3$) and at least $2$ actual edges must exist between the missing edges (or we find a missing triangle) so again we obtain $2\cdot 5+2+2\cdot 5-1=21$ edges touching $L$. 

This already implies $M(20,5) \ge 53/18$ but to obtain our best bound we need to work a little bit harder. In particular, if we do find only $21$ edges touching $L$ above we obtain a lot of structural information about $L$.  

\begin{claim*}
If there are $21$ edges touching $L,$ then $G[L]$ induces either:
\begin{enumerate}
    \item[a)] A $K_3$ and a $K_4$ intersecting in a single vertex and consisting entirely of vertices of degree $5$ in $G$ or 
    \item[b)] Two $K_4$'s intersecting in an edge, one $K_4$ consists of vertices of degree $5$ and the remaining $2$ vertices are of degree $6$.
\end{enumerate}
\end{claim*}
\begin{proof}
We have already observed that there are no missing triangles in $L$ and that we can find $2$ disjoint missing edges, which gave us at least $21$ edges touching $L$. In order for this bound to be tight there needs to be exactly two actual edges in between them and in order to avoid missing triangles this means that the $4$ vertices making these two edges span a missing $C_4$ with both cross edges present. Each of the $4$ missing edges making this $C_4$ must send exactly $5$ edges across (or we again gain) and every vertex of this $C_4$ has at most $3$ neighbours in $L$ so has at least $2$ neighbours outside. The only way this can happen is if $2$ diagonal (so adjacent in $G$) vertices of $L$ send $3$ edges out (call them $w_1,w_2$) and remaining $2$ send $2$ edges out (call them $s_1,s_2$). By minimum degree in $G$ being at least $5$ we know that both $s_i$ must be joined to both remaining vertices in $L$ ($v$ and call the final vertex $u$). Since $v$ is adjacent to everyone the only edges we don't have information about are $uw_1,uw_2$. If only $uw_1$ is a missing edge, then $u,w_1$ must send at least $5$ edges across and since $w_1$ sends $3$ $u$ must send $2$. But this would imply $u$ is adjacent to $v,w_2,s_1,s_2$ and these $2$ outer vertices so have degree at least $6$ and improve our bound. Therefore, there are only $2$ options both $uw_1$ and $uw_2$ are edges, giving us the case b) or neither are edges, giving us case a). Note that we know $u,v$ must be of degree $5$ or we gain and since we know how many edges $s_i$'s and $w_i$'s send outside and how $G[L]$ looks like we know degree's of everyone in $G$.
\end{proof}

Observe that $v$ was an arbitrary vertex of degree $5$ so the above must hold for any such vertex.

\begin{claim*}
Case b$)$ happens for all vertices of degree $5$.
\end{claim*}
\begin{proof}
Let us assume case a) happens for vertex $v$. In other words $v \cup N(v)$ induces a triangle $v,u,w$ and a $K_4=v \cup U_v.$ Observe that $v \in N(u)$ and $v$ is not adjacent to $3$ vertices in $N(u)$ outside $N(v)$ ($u$ has degree $2$ in $v\cup N(v)$ and degree $5$ in $G$). This means $u \cup N(u)$ must also fall under case a) since in case b) any vertex in $G[u \cup N(u)]$ has at most $2$ non-neighbours. So $u \cup N(u)$ makes a triangle $u,v,w$ plus $K_4=u \cup U_u$ (we know the triangles match since only the vertices of the triangle miss $3$ edges in the neighbourhood, which means $v$ is in the triangle and is only adjacent to $u,w$). Analogously we obtain $U_w$ with the same picture. So we find a graph $W$ consisting of $3$ vertex disjoint $K_4$'s each with a singled out vertex such that the singled out vertices make a triangle (see \Cref{fig:3} for an illustration).

\begin{figure}
\begin{minipage}[t]{0.49\textwidth}
    \centering
    \begin{tikzpicture}[xscale=0.8,yscale=0.7]
    
    \defPt{0}{1}{u1}
    \defPt{0}{3}{u2}
    \defPt{-1}{2}{u3}
    \defPt{1}{2}{u4}
    
        \foreach \i in {1,...,4}
        {
        \foreach \j in {2,...,4}
     {
            \draw[] (u\i) -- (u\j);
    }
    }
    \node[] at ($(u1)+(0.25,0)$) {$u$};
    \draw[] (0,2.35) circle [x radius=1.4, y radius=0.9];
    \node[] at (-1.25,3.25) {$U_u$};
    
    \begin{scope}[rotate around={120:(0,0)}]
    \defPt{0}{1}{v1}
    \defPt{0}{3}{v2}
    \defPt{-1}{2}{v3}
    \defPt{1}{2}{v4}
    
    \foreach \i in {1,...,4}
        {
        \foreach \j in {2,...,4}
     {
            \draw[] (v\i) -- (v\j);
    }
    }
    \node[] at ($(v1)+(0.25,0)$) {$v$};
    \draw[] (0,2.35) circle [x radius=1.4, y radius=0.9];
    \node[] at (1.25,3.25) {$U_v$};
    \end{scope}

\begin{scope}[rotate around={240:(0,0)}]
    \defPt{0}{1}{w1}
    \defPt{0}{3}{w2}
    \defPt{-1}{2}{w3}
    \defPt{1}{2}{w4}
    
    \foreach \i in {1,...,4}
    {
        \foreach \j in {2,...,4}
        {
            \draw[] (w\i) -- (w\j);
        }
    }
    \node[] at ($(w1)+(0.25,0)$) {$w$};
    \draw[] (0,2.35) circle [x radius=1.4, y radius=0.9];
    \node[] at (-1.25,3.25) {$U_w$};
    \end{scope}

\draw[] (u1)--(v1)--(w1)--(u1);
\foreach \i in {1,...,4}
{
    \draw[] (u\i) \smvx;
    \draw[] (v\i) \smvx;
    \draw[] (w\i) \smvx;
}
    \end{tikzpicture}
    \caption{Graph $W.$}
    \label{fig:3}
\end{minipage}\hfill
\begin{minipage}[t]{0.49\textwidth}
    \centering
    \begin{tikzpicture}

    \defPt{0}{1}{u1}
    \defPt{0}{3}{u2}
    \defPt{-1}{2}{u3}
    \defPt{1}{2}{u4}
    
        \foreach \i in {1,...,4}
        {
        \foreach \j in {1,...,2}
        {
            \defPt{2*\i}{2*\j}{u\i\j}
            \draw[] (u\i\j) \smvx;
        }
        \draw[] (u\i1) -- (u\i2);
        }
    \draw[] (u11) -- (u21)-- (u31) -- (u41);
    \draw[] (u12) -- (u22)-- (u32) -- (u42);
    \draw[] (u11) -- (u22)-- (u31) -- (u42);
    \draw[] (u12) -- (u21)-- (u32) -- (u41);
    
    \end{tikzpicture}
    \caption{Graph $H.$}
    \label{fig:4}
\end{minipage}
\end{figure}

We know $u,v,w$ have no further edges among vertices of $W$ as they have degree $5$ both in $G$ and $W$. But we claim that also there are no edges between distinct $U_v,U_u,U_w,$ meaning we find $W$ as an induced subgraph. To see this if say $x\in U_v , y\in U_u$ are adjacent, then they can send at most $1$ edge outside $W$ (they have degree $5$ in $G$ and $4$ in $W$). Since $u$ and $x$ are independent and there are at most $8$ vertices adjacent to one of them ($w,U_v,U_u$ and the potential neighbour of $x$ outside $W$) this means that removing $u,N(u),x,N(x)$ removes at most $10$ vertices and leaves us with a $10$ vertex graph which has no independent set of size $3$ (or together with $u$ and $x$ we get a size $5$ one in $G$) so this reduces to the $M(10,3)$ case and we are done. 

Observe that  every vertex of $U_v,U_w,U_u$ has degree $3$ in $W$ and $5$ in $G$ so sends exactly $2$ edges to $G \setminus W$. If there is $x \in U_u$ and $y \in U_w$ which have a common neighbour outside $W,$ then there are at least $5$ vertices outside of $W$ which are not adjacent to either $x$ or $y$. In particular, there is a missing edge among such vertices, which together with $x,y$ and $v$ makes an independent set of size $5$. This means that $N(U_u),N(U_w)$ and $N(U_u)$ when restricted outside of $W$ must be disjoint. In particular, since there are $8$ remaining vertices there is one neighbourhood, say of $U_v$ consisting only of $2$ vertices. Since every vertex of $U_v$ sends $2$ edges outside $W$ this means that $U_v$ and these $2$ vertices span a $K_{3,2}$. Finally, this means that for any vertex $v'$ of $U_v$ we have $v' \cup N(v')$ induces a $K_6$ minus a triangle, which falls under neither of our cases. This is a contradiction since $v'$ has degree $5$ in $G$.
\end{proof}

Let $H$ denote the graph consisting of two vertex disjoint $K_4$'s with two pairs of vertices, one pair per part joined by a $K_{2,2}$. See \Cref{fig:4} for an illustration.

\begin{claim*}
Any vertex of degree $5$ lies in an induced copy of $H$ in $G$ where vertices of degree $3$ in $H$ have degree $6$ in $G$.
\end{claim*}
\begin{proof}
Let $v$ be a vertex of degree $5$, since case b) occurs we obtain two $K_4$'s sharing an edge $uv$. One of the $K_4$'s consists only of vertices of degree $5$ so $u,v$ and call its remaining $2$ vertices $w,s$. So by looking at the neighbourhood of $w$, since case b) must occur we deduce there is a $K_4$ which intersects our two original cliques only in $ws$ (since $v,u$ would have too high degree if they were in the shared edge). This gives us a desired copy of $H$ and it remains to be shown $H$ is induced. 

$u,v,s,w$ all have degree $5$ in $H$ and $5$ in $G$ so they have no additional edges. If we had an edge between $x$ and $y$, both of which are of degree $6$ and which satisfy $x\sim w,y\sim v,$ then $x$ has $4$ edges in $G[H]$ so sends at most $2$ edges outside. Deleting these two edges and our copy of $H$ we deleted at most $10$ vertices among which we deleted $x,N(x),v,N(v)$ and $x$ and $v$ are independent, so we again reduce to the $M(10,3)$ case and are done.   
\end{proof}

\begin{claim*}
No two copies of $H$ as in the previous claim can intersect.
\end{claim*}
\begin{proof}
The central $K_4$ in any such copy of $H$ consists of vertices of degree $5$ in $G$ and in $H$ so they have to be disjoint between copies of $H$. 

If a vertex $x$ of degree $6$ is shared by two copies $H_1,H_2$ of $H,$ then $x$ has either $6$ distinct edges among $H_1 \cup H_2$ or $5$, but then $|H_1 \cap H_2| \ge 2$. Let us delete $H_1,H_2$ and the potential external neighbour of $x$. In either case we deleted at most $15$ vertices. Let us choose $v_i \in H_i$ such that $d(v_i)=5$ and $v_i \nsim x$. This means $x,v_1,v_2$ make an independent set and we deleted them and their neighbourhoods in $G$. This means that among remaining $5$ vertices there can be no missing edges as any such edge together with $x,v_1,v_2$ would span a $K_5$.
\end{proof}

Finally this means that the copies of $H$ we find should be vertex disjoint and since there are $12$ vertices of degree $5$ and each $H$ contains $4$ this means there should be at least $3$ copies so at least $24$ vertices in total giving us a contradiction and completing the proof.
\end{proof}

\section{Upper bounds for the Tur\'an 2-density problem}
\label{appendix}
In this section we show our upper bounds on $M(m,r)$. We begin with some basic observations which will prove useful in computing $2$-densities.
We say two vertices $v$ and $u$ are \textit{equivalent} if there is a transposition swapping vertices $v$ and $u$ in the automorphism group of $G$.

\begin{lem}\label{lem:m2-switching}
Let $H$ be the subgraph of $G$ maximising $d_2(G)$ which has as many vertices as possible. For any pair of equivalent vertices, $H$ contains either both or none of them.
\end{lem}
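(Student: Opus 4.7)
The plan is to argue by contradiction. Suppose $u$ and $v$ are equivalent via a transposition $\sigma \in \mathrm{Aut}(G)$ but $u \in V(H)$ while $v \notin V(H)$. Set $D := m_2(G) = d_2(H)$, and consider the image $H' := \sigma(H)$. Since $\sigma$ is an automorphism of $G$, the subgraph $H'$ is isomorphic to $H$ and hence also satisfies $d_2(H') = D$. Crucially, because $\sigma$ is a transposition it fixes every vertex other than $u, v$, so $V(H') = (V(H) \setminus \{u\}) \cup \{v\}$ and the graphs $H, H'$ induce the same subgraph on the common vertex set $V(H) \setminus \{u\}$. Writing $s := \deg_H(u)$, we therefore also have $\deg_{H'}(v) = s$.

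The key object is the union $H \cup H'$, viewed as a subgraph of $G$: it has exactly $|H|+1$ vertices and $e(H) + s$ edges, the extra $s$ edges being those from $v$ to $N_H(u)$. Using the identity $e(H) - 1 = D(|H|-2)$ that comes from $d_2(H) = D$, a short computation gives that $d_2(H \cup H') \ge D$ precisely when $s \ge D$, and, symmetrically, $d_2(H - u) \ge D$ precisely when $s \le D$.

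This leads to a three-case analysis, each ending in a contradiction. If $s > D$, then $d_2(H \cup H') > D$, violating $m_2(G) = D$. If $s < D$, then $d_2(H - u) > D$, again violating $m_2(G) = D$ (assuming $|H| \ge 4$, so that $H - u$ still has at least $3$ vertices; the cases $|H| = 3$ are handled by inspecting the short list of $3$-vertex graphs). The remaining case $s = D$ is the delicate one, and is precisely where the ``with as many vertices as possible'' clause enters: here both inequalities are tight, so $d_2(H \cup H') = D$, but $|V(H \cup H')| = |H|+1$, contradicting the vertex-maximality of $H$.

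The main step is identifying $H \cup H'$ (together with its sibling $H - u$) as the right auxiliary subgraph: once that is done, everything reduces to the single identity $e(H) - 1 = D(|H|-2)$ and a comparison of $s$ with $D$. There is no real obstacle; the only minor nuisance is the edge case $|H| = 3$, where $H - u$ has too few vertices to apply $d_2$ but which can be dispatched by hand.
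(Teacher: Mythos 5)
Your proof is correct and takes essentially the same approach as the paper. The paper's argument is exactly the same pair of comparisons --- $d_2(H)$ against $d_2(H \setminus \{u\})$ and against $d_2(H \cup \{v\})$, the latter strict because $H$ is vertex-maximal --- so your three-way split on $s$ versus $D$ with the explicit $H' = \sigma(H)$ is just slightly more verbose bookkeeping of the identical argument, and the $|H|=3$ edge case (which in fact cannot have $s < D$) is left implicit in the paper as well.
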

\begin{proof}
Let $v$ and $u$ be equivalent vertices and assume for the sake of contradiction that $H$ contains $v$ but not $u$. Let $d$ denote the number of neighbours of $v$ in $H$, in particular by equivalence $u$ also has $d$ neighbours in $H \setminus \{v\}$, so in particular at least $d$ neighbours in $H$.

By maximality of $H$ we must have $\frac{e(H)-1}{|H|-2} \ge \frac{e(H)-d-1}{|H|-1-2} \implies d \ge \frac{e(H)-1}{|H|-2}$ or $H \setminus \{v\}$ would have larger $2$-density. Similarly, by comparing with $H \cup \{u\}$ we must have $\frac{e(H)-1}{|H|-2}>\frac{e(H)+d-1}{|H|+1-2} \implies d < \frac{e(H)-1}{|H|-2}$ giving us a contradiction. 
\end{proof}

\begin{lem}\label{thm:ub-even}
For any $k \ge 2$ we have $M(2k,3) \le \frac{k+1}2.$
\end{lem}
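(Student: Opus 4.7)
The plan is to exhibit the obvious extremal candidate and compute its $2$-density. Matching the lower bound of \Cref{thm:m2k3-lb} and in line with the intuition discussed in the introduction (complement of a balanced Tur\'an graph), I would take
\[ H := K_k \sqcup K_k, \]
the vertex-disjoint union of two $k$-cliques. Clearly $|H|=2k$ and any independent set meets each component in at most one vertex, so $\alpha(H)\le 2$.

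To bound $m_2(H)$ I would invoke \Cref{lem:m2-switching}: any two vertices lying in a common clique of $H$ are equivalent, since the transposition swapping them is an automorphism of $H$. Hence the maximum-vertex maximiser $H' \subseteq H$ of $d_2(\cdot)$ (over subgraphs on at least three vertices) must contain each clique of $H$ entirely or not at all, forcing $H' \in \{K_k, 2K_k\}$ when $k \ge 3$. A direct calculation gives
\[ d_2(K_k) = \frac{\binom{k}{2}-1}{k-2} = \frac{(k-2)(k+1)}{2(k-2)} = \frac{k+1}{2}, \qquad d_2(2K_k) = \frac{k(k-1)-1}{2(k-1)} = \frac{k+1}{2} - \frac{k}{2(k-1)}, \]
and the second quantity is strictly smaller. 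Therefore $m_2(H) = (k+1)/2$, which is exactly the desired bound. The degenerate case $k=2$ is immediate: the only three-vertex subgraph of $2K_2$ carries at most one edge, so $m_2(2K_2) = 1/2 < 3/2$.

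No step here is hard; the argument is essentially one short computation once \Cref{lem:m2-switching} has been applied. If one prefers a self-contained route bypassing the lemma, one observes that every subgraph of $H$ on at least three vertices is of the form $K_a \sqcup K_b$ with $a,b \le k$ and $a+b \ge 3$, and checks by a brief algebraic manipulation (equivalently, that attaching a smaller clique to $K_a$ cannot raise $d_2$) that $d_2(K_a \sqcup K_b) \le d_2(K_{\max(a,b)}) \le (k+1)/2$. Either route reduces the entire proof to the basic $d_2$ calculation for a single clique.
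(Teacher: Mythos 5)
Your proof is correct and takes essentially the same route as the paper: the same extremal construction $K_k \sqcup K_k$, the same appeal to \Cref{lem:m2-switching} to restrict attention to $K_k$ and $2K_k$, and the same direct computation. Your explicit treatment of the degenerate case $k=2$ (where $K_k$ has fewer than three vertices, so \Cref{lem:m2-switching} does not immediately apply) is a small but welcome bit of extra care that the paper glosses over.
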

\begin{proof}
We take $G$ to be a disjoint union of two $K_k$'s. It is immediate that $\alpha(G) \le 2$ so we only need to show $m_2(G)=m_2(K_k)=\frac{k+1}{2}$. To this end let us take the subgraph $H$ of $G$ as in \Cref{lem:m2-switching}. Since all the vertices within a single clique are equivalent we deduce that $H$ is either $K_k$ or the whole graph. In the first case we are done immediately. In the second case we have $m_2(G)=d_2(G)=\frac{k(k-1)-1}{2k-2}<\frac k2<\frac{k+1}{2}$ giving us a contradiction. 
\end{proof}

\textbf{Remark.} There are more interesting examples, for example one can find tight examples which are $K_k$-free. If $k$ is even one can take $k/2$-th power of the cycle on $2k$ vertices while for odd $k$ one can find a different Cayley graph of the Cyclic group of order $2k$.

\begin{lem}\label{lem:ub-odd-tight}
For any $k \ge 4$ the bound in \Cref{thm:ub-odd} is tight. 
\end{lem}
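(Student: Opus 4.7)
The plan is to exhibit a graph realising the bound in \Cref{thm:ub-odd}. Let $a$ be the maximiser of $m(t) = \min\!\left(t/(k-2),\ ((k+1)/2-(t-1)^2)/(2k-3)\right)$ over $1 \le t \le k-2$. I would take $G$ on $2k-1$ vertices to be the graph whose complement $\bar G$ is the standard (independent-set) blow-up of $C_5$ on parts $P_1,\ldots,P_5$ of sizes $1,a,k-1-a,k-1-a,a$ in cyclic order. Equivalently, in $G$ each $P_i$ induces a clique, $P_i$ spans a complete bipartite graph with $P_j$ whenever $\{i,j\}$ is a non-edge of $C_5$, and no edges between $P_i,P_j$ whenever $\{i,j\}$ is an edge of $C_5$. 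Since $\bar G$ is a blow-up of the triangle-free graph $C_5$, it is triangle-free, so $\alpha(G) = \omega(\bar G) \le 2$. A direct count of cliques within parts and complete bipartite graphs between non-$C_5$-adjacent parts gives
$$e(G) = 2\binom{a}{2} + 2\binom{k-1-a}{2} + 2(k-1-a) + 2a(k-1-a) + a^2 = k(k-1)+(a-1)^2-1,$$
exactly matching the extremal edge count from the proof of \Cref{thm:ub-odd}.

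To compute $m_2(G)$, I would apply \Cref{lem:m2-switching}. Any two vertices inside the same $P_i$ are interchanged by their transposition in $\mathrm{Aut}(G)$ (fixing all other vertices), so the lemma forces the vertex-maximal subgraph attaining $m_2(G)$ to be of the form $G_S := G[\bigcup_{i\in S} P_i]$ for some $S \subseteq \{1,\ldots,5\}$. This reduces $m_2(G)$ to a finite maximum over $2^5$ subsets, further halved by the $C_5$-reflection fixing $P_1$ and swapping $P_2\leftrightarrow P_5$, $P_3\leftrightarrow P_4$.

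Two specific subsets achieve the bound. First, $S=\{1,\ldots,5\}$ gives $d_2(G) = \beta_2 := (k(k-1)+(a-1)^2-2)/(2k-3)$, which equals $(k+1)/2 - ((k+1)/2-(a-1)^2)/(2k-3)$. Second, $S=\{1,3,5\}$ (symmetrically $\{1,2,4\}$) yields a subgraph on $1+(k-1-a)+a = k$ vertices in which the only $C_5$-adjacent pair of parts inside $S$ is $\{1,5\}$, producing exactly $a$ missing edges in an otherwise complete $K_k$; consequently $d_2(G_{\{1,3,5\}}) = (k(k-1)/2-a-1)/(k-2) = \beta_1 := (k+1)/2 - a/(k-2)$. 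The identity $(k+1)/2 - \min(X,Y) = \max((k+1)/2-X,\ (k+1)/2-Y)$ applied to the two arguments of $m(a)$ gives $(k+1)/2 - m(a) = \max(\beta_1,\beta_2)$, so these two subgraphs together witness $m_2(G) \ge (k+1)/2 - m(a)$.

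The main obstacle is the matching upper bound $d_2(G_S) \le \max(\beta_1,\beta_2)$ for every remaining $S$. The $|S| \le 2$ cases reduce via the observation that two non-$C_5$-adjacent parts form a clique, so the largest such subgraph is $K_{k-1}$ with $d_2 = k/2$; the inequality $k/2 \le \beta_2$ is equivalent to $k+2(a-1)^2\ge 4$, which holds for $k\ge 4$. The $|S| \in \{3,4\}$ cases are a routine but tedious case-by-case check: for each $S$ one compares $(e(G_S)-1)(2k-3)$ with $(e(G)-1)(|G_S|-2)$, and similarly against $\beta_1$. The guiding principle is that each additional part either contributes too few new edges for its new vertices (so $d_2(G_S)\le\beta_2$) or merges cliques through a non-$C_5$-adjacent pair, whose combined $d_2$ is at most that of $K_{k-1}$. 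Although elementary, carrying out this enumeration cleanly in closed form is the algebraically lengthy step which the paper itself describes as ``somewhat tedious''.
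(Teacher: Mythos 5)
Your construction is exactly the paper's: the complement-$C_5$-blow-up description with part sizes $1,a,k-1-a,k-1-a,a$ is isomorphic (after the cyclic relabelling $1,3,5,2,4$) to the paper's ``blow-up of $C_5$ with cliques placed inside parts'' with sizes $1,k-1-a,a,a,k-1-a$, and your edge count $k(k-1)+(a-1)^2-1$ matches. The application of \Cref{lem:m2-switching} to restrict to unions of full parts, and the identification of the two subgraphs realising $\beta_1$ and $\beta_2$, are also correct. The identity $(k+1)/2-m(a)=\max(\beta_1,\beta_2)$ is fine.

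However, the lemma requires the \emph{upper} bound $m_2(G)\le(k+1)/2-m(a)$, and you explicitly defer its core — the inequality $d_2(G_S)\le\max(\beta_1,\beta_2)$ for every $S$ with $|S|\in\{3,4\}$ — to an unperformed ``routine but tedious case-by-case check''. That is precisely the content of the lemma, and it is not routine as stated: the inequality must hold for the \emph{specific} maximiser $a$ of $m(t)$, not for all $a$, and several of the auxiliary facts you silently use require that maximality. In particular you use $2a\le k-1$ (so that $P_2\cup P_5$ and $P_3\cup P_4$ give at most $K_{k-1}$) without deriving it; the paper shows $a\le(k-1)/2$ precisely from the assumption that $a$ maximises $m(t)$. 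Without such structural consequences of the choice of $a$, a brute-force enumeration of the $|S|\in\{3,4\}$ cases does not obviously close cleanly.

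The paper avoids the enumeration altogether with a cleaner argument that your proposal misses: first it disposes of subgraphs on at most $k-1$ vertices via $m_2(K_{k-1})=k/2\le(k+1)/2-m(a)$ (using $m(a)\le 1/2$); then it derives $a\le(k-1)/2$ and hence $\delta(G)\ge k-1$, so that any union of parts $H$ on $2k-1-t$ vertices, $1\le t\le k-2$, satisfies $e(H)\le e(G)-t(k-1)+\binom{t}{2}$, and a single algebraic inequality in $t$ yields $d_2(H)\le(k+1)/2-m(a)$ for the whole range $k+1\le|H|\le 2k-2$ at once; finally the case $|H|=k$ is handled by noting that any $k$-vertex union of parts must include a $C_5$-adjacent pair and therefore miss at least $a$ edges, giving $d_2(H)\le(\binom{k}{2}-a-1)/(k-2)=\beta_1$. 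To complete your proof you would need to either carry out the enumeration using the structural facts $m(a)\le 1/2$ and $a\le(k-1)/2$ (both consequences of maximality), or adopt the paper's minimum-degree shortcut.
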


\begin{proof}
Let $m(t)=\min\left(\frac{t}{k-2},\frac{(k+1)/2-(t-1)^2}{2k-3}\right)$ and let $a$ be the maximiser of this expression (over $1 \le t \le k-2$), as in the proof of \Cref{thm:ub-odd}. Our goal is to find a graph $G$ on $2k-1$ vertices with $\alpha(G)\le 2$ which has $m_2(G) \le \frac{k+1}2-m(a)$. We take $G$ to be the blow-up of $C_5$ with cliques placed inside of parts of size $1,k-1-a,a,a,k-1-a,$ in order around the cycle. Since complement of $G$ is an actual blow-up of $C_5$ it is triangle-free so $\alpha(G) \le 2$.

Observe first that $e(G)=2(k-1-a)+2\binom{k-1}{2}+a^2=k(k-1)+(a-1)^2-1,$ so $$\frac{e(G)-1}{|G|-2}=\frac{k+1}2-\frac{(k+1)/2-(a-1)^2}{2k-3} \le \frac{k+1}2-m(a).$$ 
Note that for $k \ge 4$ we have $\frac{(k+1)/2-(t-1)^2}{2k-3} \le 1/2$ for any $t\ge 1.$ This implies $m(a) \le 1/2$ and in particular since $m_2(K_{k-1}) =k/2 \le (k+1)/2-m(a)$ it is enough to consider only proper subgraphs of size at least $k$ or more when computing $m_2(G)$ (or the desired bound holds). 
Let $H$ be the subgraph of $G$ maximising $d_2(H)$ as in \Cref{lem:m2-switching}. Since all vertices within a part are equivalent we know $H$ must be a union of full parts. Above considerations tell us that $H$ is not the whole graph and has at least $k$ vertices.

Let us also observe that if $\frac{a-1}{k-2}\ge 1/2,$ the second term is smaller (or equal) in the minimum for both $m(a-1)$ and $m(a)$ as well as being decreasing in $t$ which would make $m(a-1)$ larger than $m(a)$, a contradiction. Hence, $a-1 < (k-2)/2,$ implying $a \le (k-1)/2$ and in particular $k-1-a \ge a$. This implies that $\delta(G)=\min(2(k-1-a), k-1) \ge k-1.$ If $H$ has $2k-1-t$ vertices with $1 \le t \le k-2$ we would have 
\begin{align*}
    d_2(H)& =\frac{e(H)-1}{|H|-2} \le \frac{e(G)-t\delta(G)+\binom{t}{2}-1}{2k-t-3} \le \frac{k(k-1)+(a-1)^2-1-t(k-1)+\binom{t}{2}-1}{2k-t-3}\\
    & = \frac{k+1}{2}+\frac{(a-1)^2-(k+1)/2-t(k-3)/2+\binom{t}{2}}{2k-t-3}
    \le \frac{k+1}{2}-\frac{(k+1)/2-(a-1)^2}{2k-3}\le \frac{k+1}{2}-m(a),
\end{align*} 
where in the first inequality we used the fact that $H$ is obtained from $G$ by removing a set of $t$ vertices and hence, these $t$ vertices are incident to at least $\delta(G)t-\binom{t}{2}$ edges.

Finally if $H$ has exactly $k$ vertices, it is easy to see that it misses at least $a$ edges (since it is union of parts of the blow-up and every part except the single vertex one has size at least $a$) which implies $\frac{e(H)-1}{k-2}\le\frac{\binom{k}{2}-a-1}{k-2}= \frac{k+1}2-\frac{a}{k-2} \le \frac{k+1}2-m(a),$ completing the proof.
\end{proof}

The following lemma will be useful in determining when it makes sense to take vertex disjoint unions of graphs in computation of $m_2(G)$. For two graphs $G$ and $H$ we will denote by $G \sqcup H$ the graph obtained by taking a vertex disjoint union of $G$ and $H$.
\begin{lem}
Assume $2e(H)>|H|$ and $2e(G)>|G|$. Then $d_2(G \sqcup H) < \max( d_2(G),d_2(H))$. 
\end{lem}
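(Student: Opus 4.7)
The plan is to verify the claimed inequality by a direct algebraic computation, exploiting symmetry to assume we know which of $G, H$ realizes the maximum on the right-hand side.

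Without loss of generality, suppose $d_2(G) \geq d_2(H)$; we then aim to show $d_2(G \sqcup H) < d_2(G)$. To simplify the bookkeeping, I would write $A = e(G)-1,\ B = |G|-2,\ C = e(H)-1,\ D = |H|-2$, so that
\[ d_2(G) = \tfrac{A}{B}, \quad d_2(H) = \tfrac{C}{D}, \quad d_2(G \sqcup H) = \tfrac{A+C+1}{B+D+2}. \]
Note that the hypothesis $2e(G) > |G|$ becomes $2A > B$, and $d_2(G) \ge d_2(H)$ becomes $AD \geq BC$ (the hypotheses also force $|G|, |H| \geq 3$, so $B, D > 0$).

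The key computation is simply to put the difference over a common denominator:
\[ d_2(G) - d_2(G \sqcup H) \;=\; \frac{A(B+D+2) - B(A+C+1)}{B(B+D+2)} \;=\; \frac{(AD - BC) + (2A - B)}{B(B+D+2)}. \]
The denominator is positive, the first summand in the numerator is nonnegative by $d_2(G) \geq d_2(H)$, and the second is strictly positive by the hypothesis $2e(G) > |G|$. Hence the difference is strictly positive, yielding $d_2(G \sqcup H) < d_2(G) = \max(d_2(G), d_2(H))$, as desired.

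There is no genuine obstacle here; the only substantive observation is the rearrangement $A(B+D+2) - B(A+C+1) = (AD - BC) + (2A - B)$, which neatly separates the contribution from the ordering of the two densities and the contribution from the "density $> 1/2$" hypothesis. The role of the assumption $2e(H) > |H|$ is precisely to handle the symmetric case when $d_2(H) > d_2(G)$: swapping the roles of $G$ and $H$ reduces that case to the argument above, now using $2C > D$ in place of $2A > B$.
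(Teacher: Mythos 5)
Your proof is correct and uses essentially the same elementary algebra as the paper's; the only difference is organizational. The paper runs a symmetric proof by contradiction (assuming $d_2(G \sqcup H)$ is $\ge$ both $d_2(G)$ and $d_2(H)$, it derives $d_2(H) > d_2(G)$ from $2e(H) > |H|$ and, symmetrically, $d_2(G) > d_2(H)$), whereas you break the symmetry with a WLOG ordering and directly compute $d_2(G) - d_2(G \sqcup H) > 0$; your identity $A(B+D+2) - B(A+C+1) = (AD-BC) + (2A-B)$ is just the numerator of the cross-multiplied version of the paper's comparison, so the two arguments carry the same content.
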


\begin{proof} Assume otherwise, so $d_2(G \sqcup H) \ge d_2(G),d_2(H)$. We have 
$d_2(G \sqcup H)=\frac{e(G)+e(H)-1}{|G|+|H|-2} \ge \frac{e(G)-1}{|G|-2}=d_2(G) \Leftrightarrow \frac{e(G)+e(H)-1}{e(G)-1} \ge \frac{|G|+|H|-2}{|G|-2}
\Leftrightarrow \frac{e(H)}{e(G)-1} \ge \frac{|H|}{|G|-2}  \Leftrightarrow \frac{e(H)}{|H|} \ge \frac{e(G)-1}{|G|-2}.$ 
Observe now that since $|H|<2e(H)$ we have $\frac{e(H)}{|H|} < \frac{e(H)-1}{|H|-2}=d_2(H)$ so $d_2(G \sqcup H) \ge d_2(G) \implies d_2(H)>d_2(G)$. Repeating with $H$ in place of $G$ we obtain $d_2(G)>d_2(H)$ which is a contradiction.
\end{proof}

The following result shows that bound of \Cref{general-ub} is tight (up to constant factor in the constant term) for the first half of the modulae.

\begin{thm}
Let $k=\ceil{\frac{m}{r-1}}$ and $\ell=m-(k-1)(r-1).$ Provided $\ell \le \frac{r-1}2$ and that $k$ is sufficiently larger than $r$ there exists $c_r>0$ such that $M(m,r) \le \frac{k+1}{2}-\frac{c_r}{\sqrt{k}}.$
\end{thm}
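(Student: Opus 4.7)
The plan is to construct $G$ as a vertex-disjoint union that combines the tight examples from the $r=3$ case with cliques, so that the $2$-density of each piece is controlled by \Cref{lem:ub-odd-tight}. Writing $m = (k-1)(r-1) + \ell$, the hypothesis $\ell \le (r-1)/2$ ensures $r - 1 - 2\ell \ge 0$. I would take $G$ to be the vertex-disjoint union of $\ell$ copies of the graph $T$ from \Cref{lem:ub-odd-tight} (the blow-up of $C_5$ on $2k-1$ vertices with parts of sizes $1, k-1-a, a, a, k-1-a$, where $a$ is the maximiser in \Cref{thm:ub-odd}), together with $r - 1 - 2\ell$ copies of $K_{k-1}$. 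The vertex count is $\ell(2k-1) + (r-1-2\ell)(k-1) = (r-1)(k-1) + \ell = m$, and since $\alpha(T) \le 2$ and $\alpha(K_{k-1}) = 1$, one gets $\alpha(G) \le 2\ell + (r-1-2\ell) = r - 1$.

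Set $\alpha := (k+1)/2 - c_r/\sqrt{k}$. By \Cref{lem:ub-odd-tight}, $m_2(T) \le (k+1)/2 - m(a) = (k+1)/2 - \Theta(1/\sqrt{k})$, while $m_2(K_{k-1}) = k/2 \le (k+1)/2 - 1/2$. Thus, choosing $c_r > 0$ small enough (in fact independent of $r$, once $k$ exceeds some absolute constant), one has both $m_2(T) \le \alpha$ and $m_2(K_{k-1}) \le \alpha$; moreover, $\alpha \ge 1/2$.

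The remaining step is the disjoint-union estimate: $m_2(G_1 \sqcup \cdots \sqcup G_s) \le \alpha$ whenever $m_2(G_i) \le \alpha$ for each $i$ and $\alpha \ge 1/2$. Given a subgraph $H \subseteq G$ with $|H| \ge 3$, I would first remove any isolated vertices of $H$ (this can only increase $d_2(H)$ when $e(H) \ge 1$, and when $e(H) = 0$ we trivially have $d_2(H) < 0 \le \alpha$), then write $H = H_1 \sqcup \cdots \sqcup H_s$ with $H_i \subseteq G_i$. For each non-empty $H_i$ the bound $e(H_i) \le \alpha(|H_i| - 2) + 1$ holds: when $|H_i| \ge 3$ this is just $d_2(H_i) \le m_2(G_i) \le \alpha$, and when $|H_i| = 2$ it reduces to $e(H_i) \le 1$. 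Summing over the $s' \ge 1$ non-empty components yields
\[
e(H) \;\le\; \alpha\bigl(|H| - 2s'\bigr) + s' \;=\; \alpha(|H|-2) + 1 + (s'-1)(1 - 2\alpha) \;\le\; \alpha(|H|-2) + 1,
\]
the last inequality by $\alpha \ge 1/2$ and $s' \ge 1$. Hence $d_2(H) \le \alpha$, and so $m_2(G) \le \alpha$, as desired.

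No step here is substantively hard; the main obstacle is really just bookkeeping, namely checking that the per-component $m_2$ bounds combine cleanly under disjoint union and that the edge cases ($|H_i| \in \{1,2\}$, isolated vertices in $H$, or $H$ entirely inside one $G_i$) are handled correctly. The use of $\alpha \ge 1/2$ is what makes the combination work, and this is automatic for our choice of $\alpha$ once $k$ is at least a small absolute constant.
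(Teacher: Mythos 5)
Your proof is correct, and the construction (disjoint union of $\ell$ copies of the blow-up of $C_5$ from the odd $r=3$ case together with $r-1-2\ell$ copies of $K_{k-1}$) is exactly the one the paper uses. What differs is how you bound $m_2$ of the union. The paper proves an intermediate lemma stating $d_2(G\sqcup H)<\max(d_2(G),d_2(H))$ under the hypothesis $2e(G)>|G|$ and $2e(H)>|H|$, then observes any graph with independence number below $r$ and more than $3r$ vertices satisfies this, removes all per-copy pieces of $H$ that are too small (paying an $O(1/k)$ loss absorbed into $c_r$, and relying on $k$ large relative to $r$ to make this loss negligible), and finally applies the disjoint-union lemma iteratively. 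You instead prove a cleaner and more general fact directly: if $m_2(G_i)\le\alpha$ for every part and $\alpha\ge\tfrac12$, then $m_2(\sqcup_i G_i)\le\alpha$, by writing $e(H)\le\sum_i\bigl(\alpha(|H_i|-2)+1\bigr)=\alpha(|H|-2)+1+(s'-1)(1-2\alpha)$ after discarding isolated vertices. This avoids the $O(1/k)$ slack entirely, handles all $|H_i|\ge 2$ uniformly, and in fact does not invoke the hypothesis that $k$ is large compared to $r$ at all --- the only thing needed is $k$ at least an absolute constant so that the $r=3$ building block from \Cref{lem:ub-odd-tight} is defined and $\alpha\ge\tfrac12$ holds. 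So your argument is not only valid but a tightening of the paper's, proving the statement with $c_r$ an absolute constant. One small point worth stating explicitly: after removing isolated vertices from $H$ to get $H'$, you should note that $d_2(H)=(e(H')-1)/(|H|-2)\le (e(H')-1)/(|H'|-2)$ requires $e(H')\ge 1$ and $|H'|\ge 3$; both degenerate cases ($e(H')=0$, or $|H'|\le 2$) give $d_2(H)\le 0\le\alpha$ directly, as you implicitly observe.
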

\begin{proof}
Let us first describe our graph $G$. We let $G$ consist of $\ell$ vertex disjoint copies of our graph $G'$ with $2k-1$ vertices from \Cref{lem:ub-odd-tight} and $r-1-2\ell\ge 0$ copies of $K_{k-1}.$ The only information we will use about $G'$ is that $m_2(G') \le \frac{k+1}{2}-\frac{1}{2\sqrt{k+1}}$ and that $\alpha(G') \le 2$. 

Observe first that $\alpha(G)<r$ as any set of $r$ vertices of $G$ must have either $3$ vertices in some copy of $G'$ or $2$ vertices in a copy of $K_{k-1}$, either way there is an edge and the set is not independent.

On the other hand observe that any graph $H$ with $\alpha(H)< r$ and more than $3r$ vertices must satisfy $2e(H)>|H|$, since there can be at most $(1-1/r)|H|^2/2$ non-edges by Tur\'an's theorem so $e(H) \ge |H|^2/(2r)-|H|/2$. 

Let us now take $H$ to be a subgraph of $G$ maximising $d_2(H)$ and our goal is to show that, upon minor modification this $H$ must live inside a single copy of $G'$ or $K_{k-1}$ which will let us complete the proof since we know both have small enough $m_2$. 

We know that $H$ must have at least $k$ vertices or we obtain $m_2(G) =d_2(H)\le m_2(K_{k-1})=k/2$. This means that if $H$ has at most $3r$ vertices inside some copy of $G'$ or $K_{k-1},$ we can remove them and this only changes the $2$-density by $O(\frac1k) \le o(\frac1{\sqrt{k}})$, where we are taking $k$ sufficiently larger than $r$. Let us call the graph $H_1$ obtained from $H$ by deleting all vertices of $H$ belonging to a copy of $G'$ or $K_{k-1}$ if there are at most $3r$ of them inside that copy. Our new graph $H_1$ by above observation has $d_2(H_1) \ge d_2(H)-O(\frac{1}{k})$ and has the property that inside any copy of $G'$ or $K_{k-1}$ it has either none or more than $3r$ vertices. 

Applying the preceding lemma iteratively to restrictions of $H_1$ to a single copy of $G'$ or $K_{k-1}$ we can find a subgraph $H_2$ of either $G'$ or $K_{k-1}$ which satisfies $d_2(H_2) > d_2(H_1)\ge d_2(H)-O(\frac{1}{k})$. Since we know $d_2(H_2) \le \max (m_2(K_{k-1}),m_2(G'))$ and we know $m_2(K_{k-1})=k/2$ and $m_2(G')\le\frac{k+1}{2}-\frac{1}{2\sqrt{k+1}}$ so either way $m_2(G)=d_2(H)$ is small enough.
\end{proof}

\end{document}